\documentclass[11pt,a4paper]{article}

\usepackage[USenglish]{babel}
\usepackage[latin1]{inputenc}
\usepackage{microtype}
\usepackage{amsmath} 
\usepackage{amsthm,amssymb,mathrsfs} 
\usepackage[T1]{fontenc}
\usepackage{ae,aecompl}
\usepackage{times}
\usepackage[colorlinks,pagebackref, hypertexnames=true, unicode, pdfpagelabels]{hyperref}  
\usepackage[alphabetic,backrefs]{amsrefs}
\usepackage{enumerate}
\usepackage{commath} 
\usepackage{tensor} 

\newtheorem{theorem}{Theorem}[section]

\newtheorem{corollary}[theorem]{Corollary}
\newtheorem{lemma}[theorem]{Lemma}
\newtheorem{conjecture}[theorem]{Conjecture}

\theoremstyle{remark}
\newtheorem{remark}[theorem]{Remark}

\theoremstyle{definition}
\newtheorem{definition}[theorem]{Definition}
\newtheorem{example}[theorem]{Example}

\newtheorem*{thm}{Theorem}

\numberwithin{equation}{section}

\newcommand{\hk}{\mathbin{\! \hbox{\vrule height0.3pt width5pt 
depth 0.2pt \vrule height5pt width0.4pt depth 0.2pt}}}
\newcommand{\ws}{\ensuremath{w^{\flat}}}
\hyphenation{Hamil-ton-ian}

\begin{document}

\title{${G}_2$ holonomy manifolds are superconformal}
\author{L\'azaro O. Rodr\'iguez D\'iaz\thanks{IMECC-UNICAMP, S\~ao Paulo. Supported by S\~ao Paulo State Research Council (Fapesp) grant 2014/13357-7.}.}
\date{}
\maketitle

\begin{abstract}
We study the chiral de Rham complex (CDR) over a manifold $M$ with
holonomy $G_2$. We prove that the vertex algebra of global sections of the
CDR associated to $M$ contains two commuting copies of the Shatashvili-Vafa $G_2$ superconformal algebra. Our proof is a \textit{tour de force}, based on explicit computations.   
\end{abstract}

\tableofcontents
\newpage

\section{Introduction}
\label{cap:introduction}

The chiral de Rham complex (CDR) was introduced in \cite{Malikov-Schechtman-Vaintrob99} by  Malikov, Schechtman and Vaintrob. It is a sheaf of supersymmetric vertex algebras over a smooth manifold $M$. Locally, over a coordinate chart, it is simply $n$ copies of the $bc-\beta\gamma$ system ($n$ is the dimension of $M$), i.e., a tensor product of $n$ copies of the Clifford vertex algebra and $n$ copies of the Weyl vertex algebra, and then extended to $M$ by gluing on the intersections of these coordinate charts.

One of the most important facts about the CDR proved in the seminal work \cite{Malikov-Schechtman-Vaintrob99} is that there exists an embedding of the $N=2$ superconformal vertex algebra into the vertex algebra of global sections of CDR when $M$ is a Calabi-Yau manifold. This idea of looking for special vertex subalgebras of the vertex algebra of global sections of the CDR was further investigated in \cite{Ben-Zvi-Heluani-Szczesny} where it was proved that when $M$ is a hyperk\"{a}lher manifold, the $N=4$ superconformal vertex algebra appears as a subalgebra of global sections of CDR. Subsequently in \cite{Heluani09} it was shown that in fact there are two commuting copies of the $N=2$ superconformal algebra ($N=4$ superconformal algebra) of half the central charge when $M$ is Calabi-Yau (respectively hyperk\"{a}lher).

It is by now well known in the physics literature (\cite{Odake89}, \cite{HowePapadopoulos91}, \cite{Howe-Papadopoulos93}, \cite{Shatashvili-Vafa95}) that even though we can define a sigma model on an arbritary target space, in order for the theory to be supersymmetric, the target space manifold must be of special holonomy. This implies the existence of covariantly constant $p$-forms,  and the existence of such forms on the target space manifold implies the existence of extra elements in the chiral algebra.

In \cite{Ekstrand-Heluani-Kallen-Zabzine13} a program was launched in order to understand the facts mentioned above as a relation between special holonomy of $M$ and the existence of certain subalgebras of CDR. To pursue this objective they introduced an embedding (Theorem \ref{theorem Reimundo}) different than the one in \cite{Malikov-Schechtman-Vaintrob99}, of the space of differential forms $\Omega^{*}(M)$ into global sections of CDR (in fact, they introduced two different embeddings). When the manifold M has special holonomy it admits covariantly constant forms, so they obtain corresponding sections of CDR and the subalgebra generated by them. In particular, they recover the result of \cites{Ben-Zvi-Heluani-Szczesny, Heluani09} when M is a Calabi-Yau manifold or hyperk\"{a}lher. They also constructed two commuting copies of the Odake algebra on the space of global sections of CDR of a Calabi-Yau threefold, and conjectured a similar result for $G_{2}$ holonomy manifolds.

More precisely they conjectured \cite[Conjecture 7.3]{Ekstrand-Heluani-Kallen-Zabzine13} (Conjecture \ref{conjecture}) that: \emph{if $M$ is a manifold with $G_{2}$ holonomy, the vertex algebra of global sections of CDR contains two commuting copies of the Shatashvili-Vafa $G_{2}$ superconformal algebra}, each of these copies should be generated by the global section that comes from the covariantly constant $3$-form (that defines the geometry) using the two different embeddings of $\Omega^{*}(M)$ that they defined. 

The \emph{Shatashvili-Vafa $G_{2}$ superconformal algebra} appeared as the chiral algebra associated to the sigma model with target a manifold with $G_{2}$ holonomy in \cite{Shatashvili-Vafa95}, its classical counterpart had been studied in \cite{Howe-Papadopoulos93}. It is a superconformal vertex algebra with six generators $\{ L, G, \Phi, K, X, M \}$. It is an extension of the $N=1$ superconformal algebra of central charge $c=21/2$ (formed by the super-partners $\{L,G\}$) by two fields $\Phi$ and $K$, primary of conformal weight $\frac{3}{2}$ and $2$ respectively, and their superpartners $X$ and $M$ (of conformal weight $2$ and $\frac{5}{2}$ respectively). Their OPEs can be found in subsection \ref{Shatashvili-Vafa algebra} in the language of lambda brackets of \cite{dandrea}. 

This algebra is a member of a two-parameter family $SW(\frac{3}{2}, \frac{3}{2}, 2)$ of non-linear $W$-algebras previously studied in \cite{Blumenhagen92}, it is parametrized by $(c,\varepsilon)$ ($c$ is the central charge and $\varepsilon$ the coupling constant). The Shatashvili-Vafa $G_{2}$ algebra is a quotient of $SW(\frac{3}{2},\frac{3}{2}, 2)$ with $c=\frac{21}{2}$ and $\varepsilon=0$, in other words is the only one among this family which has central charge $c=\frac{21}{2}$ and contains the tri-critical Ising model as a subalgebra.

The above conjecture was checked in \cite{Ekstrand-Heluani-Kallen-Zabzine13} in the case when the manifold $M=\mathbb{R}^{7}$ is the flat space, and when $M=CY_{3}\times S^{1}$, where $CY_{3}$ is a compact Calabi-Yau threefold and $S^{1}$ is a circle, using the above mentioned result about the Odake algebra.

In this paper we prove the conjecture when $M$ is an arbitrary non-flat $G_{2}$ manifold. Our approach is a \textit{tour de force}, based on explicit computations, some of them really long, and some abstract algebraic manipulations. But the beauty of these after all is that we need to use many of the known identities in $G_{2}$ geometry, all the computations are tied in a non-trivial way to the geometry of the manifold. To perform the computations we have used the \ttfamily{Mathematica} \normalfont package OPEdefs created by Kris Thielemans \cite{Thielemans91} for symbolic computation of operator product expansion, and the computer algebra system \ttfamily{Cadabra} \normalfont \cite{Cadabra07} created by Kasper Peeters.

In section \ref{sec-vertex superalgebras} we recap the basic facts about vertex superalgebras and we introduce the main examples that are used in the paper. In section \ref{sec-the Chiral de Rham Complex} we review the construction of the CDR as well as the main tool (Theorem~\ref{theorem Reimundo}) used in the paper: the embedding of the space of differential forms $\Omega^{*}(M)$ into global sections of CDR, introduced by \cite{Ekstrand-Heluani-Kallen-Zabzine13}. In section \ref{sec-G2 holonomy manifolds} we recall some background material on $G_{2}$ geometry as well as many of the known identities necessary for the computations in the next section. In section \ref{CDR of a G2 manifold} we prove the main result of this paper:
\begin{thm}
Let $M$ be a $G_{2}$ holonomy manifold, then the space of global sections of the CDR associated to $M$ contains two commuting copies of the Shatashvili-Vafa $G_2$ superconformal algebra of central charge $\frac{21}{2}$.
\end{thm}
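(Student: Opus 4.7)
The plan is to build the six generators of the Shatashvili--Vafa algebra explicitly from geometric data on $M$ and then verify every OPE in the list recalled in Subsection \ref{Shatashvili-Vafa algebra}. The starting point is the pair of embeddings $\Omega^{*}(M)\hookrightarrow\Gamma(M,\Omega^{\mathrm{ch}}_{M})$ provided by Theorem \ref{theorem Reimundo}; pick one of them and write it as $\omega\mapsto J_{\omega}$. On any Riemannian manifold the CDR already carries a canonical $N=1$ superconformal pair $\{L,G\}$ with central charge $3\dim M/2=21/2$, so we can take these as the first two generators for free. For the remaining four, let $\phi$ be the $G_{2}$ three-form and $\psi=\star\phi$ its Hodge dual four-form; I set $\Phi:=J_{\phi}$, define $X:=G_{(0)}\Phi$, then $K:=J_{\psi}$ (up to a normalisation that I expect to fix by the requirement that $K$ be Virasoro primary and orthogonal to $\partial G$), and finally $M:=G_{(0)}K$. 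Covariant constancy of $\phi$ and $\psi$ enters at this step: it is what lets the scalar-curvature terms that would otherwise obstruct primarity of $\Phi$ and $K$ drop out, and it is what guarantees that the $J_{\omega}$ are actually global sections (rather than only local ones).

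The core of the argument is then the OPE verification. Using Thielemans' \texttt{OPEdefs} one writes, for each pair of generators, the $\lambda$-bracket as a polynomial in the vertex operators built from $\phi$, $\psi$, their contractions $\phi\wedge\phi$, $\phi\hk\psi$, $\psi\hk\psi$, etc., together with Wick contractions of the $bc$--$\beta\gamma$ oscillators. This produces tensors of valence up to eight that have to be rewritten in terms of $\phi$, $\psi$, $g$ and the Levi-Civita symbol. Here the $G_{2}$ identities collected in Section \ref{sec-G2 holonomy manifolds} (the quadratic identities $\phi_{abc}\phi_{de}{}^{c}=\cdots$, $\phi_{abc}\psi^{abde}=\cdots$, $\psi_{abcd}\psi^{efcd}=\cdots$ and their cubic consequences) are used repeatedly, and the process is automated with \texttt{Cadabra}. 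The outputs are matched against the Shatashvili--Vafa OPEs; since $SW(\tfrac{3}{2},\tfrac{3}{2},2)$ is a two-parameter family, it suffices to check that $c=21/2$ and that the coupling constant $\varepsilon$ vanishes, the latter being the statement that the tri-critical Ising stress tensor built from $\Phi$ and $K$ splits off $L$.

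For the second copy I run the same construction with the other embedding of $\Omega^{*}(M)$ from Theorem \ref{theorem Reimundo}. The two embeddings are modelled on the holomorphic and antiholomorphic sectors of the sigma model, so by the very structure of the CDR (locally a tensor product of $bc$ and $\beta\gamma$ systems where the two embeddings use complementary combinations of the generators) the resulting generators should have trivial singular OPEs with those of the first copy. I plan to argue commutativity locally, chart-by-chart, by tracking which oscillator modes appear in each embedding and verifying that all elementary contractions between them vanish; the gluing of CDR then gives commutativity of the global vertex subalgebras.

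The hard part will not be the setup but the sheer bulk of the OPE computation for the first copy, especially the $K\!\cdot\!K$, $K\!\cdot\!M$ and $M\!\cdot\!M$ brackets, where tensors of high valence are produced and the reduction to the Shatashvili--Vafa form relies on several cubic $G_{2}$ identities that are not standard in the literature and must be derived; the subtle point is to show that all ``extra'' composite fields that would spoil closure cancel \emph{exactly}, with no residual terms proportional to the Riemann tensor (these should vanish on a $G_{2}$ manifold because $\mathrm{Ric}=0$, but this cancellation has to be tracked carefully through the normal-ordering rearrangements).
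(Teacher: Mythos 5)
There is a genuine gap at the very first step: you propose to take the canonical $N=1$ pair $\{L,G\}$ of the CDR ``for free'' as the super-Virasoro of each copy. That cannot work. Over a $7$-manifold the canonical pair built from $G=c^{i}\beta_{i}+\partial\gamma^{i}b_{i}$ has central charge $21$, not $\tfrac{21}{2}$, and in any case a single pair cannot serve as the $N=1$ subalgebra of \emph{two mutually commuting} copies (each copy would then fail to commute with the other's stress tensor). The essential content of the construction is that this canonical pair \emph{splits}, $G=G_{+}+G_{-}$ and $L=L_{+}+L_{-}$, with each summand extracted from the three-form sections themselves via $G_{\pm}=-\tfrac{1}{3}\,{\Phi_{\pm}}_{(1)}K_{\pm}$ where $K_{\pm}=G_{(0)}\Phi_{\pm}$; proving that each half closes into an $N=1$ algebra at central charge exactly $\tfrac{21}{2}$ is where Ricci-flatness (Lemma \ref{lemmaG2Ricciflat}) enters, since the scalar curvature appears in the coefficient of $\lambda^{2}$ in $[{G_{+}}_{\lambda}G_{+}]$. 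Relatedly, your commutativity argument ``by tracking oscillators'' only handles the leading terms: the relation $[{e^{i}_{+}}_{\lambda}e^{j}_{-}]=0$ does give $[{\Phi_{+}}_{\lambda}\Phi_{-}]=0$ at once, but the sections carry quantum corrections proportional to $\Gamma^{i}_{mn}g^{jm}\partial\gamma^{n}$, and $\partial\gamma$ pairs nontrivially with the $\beta$'s occurring in $G_{\mp}$ and $K_{\mp}$. Killing the cross brackets $[{G_{+}}_{\lambda}G_{-}]$ and $[{G_{+}}_{\lambda}\Phi_{-}]$ is a genuine computation that uses $\operatorname{Ric}=0$ together with the curvature identities $R_{ijkl}\varphi^{ij}{}_{m}=0$ and $R_{ijkl}\psi^{ijk}{}_{m}=0$ (Lemmas \ref{lemmaG2Curvatureidentity} and \ref{lemmaG2Curvatureidentitywiththreeform}); it is not structural.

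Two further points. First, your dictionary of generators is off: in the Shatashvili--Vafa conventions $K$ is the superpartner $G_{(0)}\Phi$ (primary, weight $2$), while the field associated to the four-form $\psi=*\varphi$ is $X$ (non-primary, so no normalisation can make it primary), which instead arises as $\tfrac{1}{6}{\Phi}_{(0)}\Phi$; moreover the four-form section of Theorem \ref{theorem Reimundo} is \emph{not} equal to $X$ --- extra terms beyond the corrections of that theorem are needed (see Remark \ref{rm-commentaboutSpin(7)case}), which is precisely why the conjecture is phrased using only the three-form. Second, matching $c=\tfrac{21}{2}$ and $\varepsilon=0$ inside the family $SW(\tfrac32,\tfrac32,2)$ is not sufficient: the Shatashvili--Vafa algebra is the quotient by the null field $4:GX:-2:\Phi K:-4\partial M-\partial^{2}G$, and at least one genuinely non-linear identity among the concrete sections must be verified by hand (the remaining non-linear brackets then follow from Jacobi). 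That verification is the single longest computation in the proof and your plan does not account for it.
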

The reader is referred to Theorem~\ref{maintheorem} for a more explicit description of the generators of these pairs of algebras. It is important to note that the Conjecture~\ref{conjecture} provides the explicit candidates (global sections of the CDR) for the generators, the challenge is to verify that they actually generate the Shatashvili-Vafa $G_2$ superconformal algebra and that the two copies commute. 

We develop the proof of Theorem~\ref{maintheorem} in four steps. First we prove in Theorem \ref{maintheorem2} that the space of global sections of CDR contains two commuting pairs of $N=1$ superconformal algebras, at central charge $21/2$ and $7/10$ respectively, the last one is precisely the tri-critical Ising model. Secondly in subsection \ref{subs-linearbrackets} we check the linear $\lambda$-brackets that were not already computed in the way of proving Theorem~\ref{maintheorem2}. In third place we verify the non-linear $\lambda$-brackets under the assumption a non-linear identity (\ref{ideal in the definition of the algebra}) among the generators is satisfied. Finally in \ref{subs-checkingtherelation} we check the non-linear identity (\ref{ideal in the definition of the algebra}), see Remark \ref{rm-discussion about non-linearities}. Along the way of proving Theorem~\ref{maintheorem} we perform a lot of computations in local coordinates, for this we make extensive use of properties of the manifold $M$ such as: the Ricci flatness, the contractions between the $3$-form and its Hodge dual (see page~\pageref{ContractionsThree-FourForm}), the $3$-form is parallel, the symmetries of the Riemann curvature tensor, etc. To avoid unnecessary calculations we take advantage of the space of global sections of the CDR is a vertex algebra, to use the axioms as well as the identities satisfied by a vertex algebra, e.g., the Jacobi identity, the Borcherds identity, etc, to derive $\lambda$-brackets from the ones already computed. 

It is remarkable that Borcherds identity was proved very useful computing the non-linear $\lambda$-brackets in subsection \ref{subsubsec-non linear lambda brackets}. It is also engaging the appearance of the first Pontryagin class $p_{1}(M)$ of the manifold $M$ in the proof of the non-linear identity (\ref{ideal in the definition of the algebra}) (see Remark \ref{rm-Pontryagin}), taking into account that the only oriented characteristic class of interest for a $G_{2}$-manifold is $p_{1}(M)$ \cite{CHNP15}.

A new feature of the Shatashvili-Vafa $G_2$ superconformal algebra not shared by their cousins the $N=2$ superconformal algebra (the $N=4$ superconformal algebra) in the Calabi-Yau manifold case (respectively in the hyperk\"{a}lher case) is the existence of non-linear $\lambda$-brackets among the generators, i.e., in some $\lambda$-brackets appear the normally ordered product of generators. This is a direct consequence of the geometry of the manifolds, in an almost Hermitian manifold even if the metric and the complex structure satisfied some compatibility condition they are essentially independent, though in the $G_{2}$ case the metric and the cross product are both determined in a highly nonlinear way from the $3$-form. 

It is worth to mention that our proof of the conjecture is more tortuous than the proofs of previous results for Calabi-Yau and hyperk\"{a}lher manifolds \cites{Ben-Zvi-Heluani-Szczesny, Heluani09, Ekstrand-Heluani-Kallen-Zabzine13}, this is due to the lack of `good' coordinates systems in the $G_2$-holonomy case, it would be interesting to explore if there exist some coordinate system in which the quantum corrections to the fields (\ref{correction3form}) and (\ref{correction4form}) get simplified or even vanish as happens when $M$ is Calabi-Yau or hyperk\"{a}lher.

Another related question is the following. There is a proposal of how to define a topological twist of the Shatashvili-Vafa $G_2$ superconformal algebra in \cite{Shatashvili-Vafa95}, in fact, there is a more recent approach \cite{BoerNaqviShomer}. There are known a few necessary topological conditions a manifold with holonomy $G_2$ should satisfied, but we are far from knowing sufficient conditions. Question: can we put the `topological' Shatashvili-Vafa $G_2$ superconformal algebra locally inside the $bc-\beta\gamma$ systems in such a way the obstruction to be globally well defined as a subalgebra of the CDR be sufficient conditions for having holonomy $G_2$? In the Calabi-Yau case this is exactly what happens, the vanishing of the first Chern class of the manifold is the obstruction to having the topological $N=2$ superconformal algebra globally well defined \cite[Section 4]{Malikov-Schechtman-Vaintrob99}.

Among the special holonomy manifolds we are missing the $Spin(7)$ case in dimension $8$. In view of the present work is reasonable to expect that an analogous theorem can be proved, i.e., the space of global sections of the CDR associated to a $Spin(7)$-holonomy manifold contains two commuting copies of the Shatashvili-Vafa $Spin(7)$ superconformal algebra (subsection \ref{Shatashvili-Vafa Spin(7) algebra}) of central charge $12$ (see Remark \ref{rm-commentaboutSpin(7)case}). It follows easily from our result that for a $Spin(7)$-manifold of the form $M\times\mathbb{R}$ where $M$ is a $G_{2}$-holonomy manifold the above claim is true.

\emph{Acknowledgements:} The author would like to thank Reimundo Heluani for generously sharing his insights and ideas and for the encouragement despite the long computations; the present paper is influenced by his views about the subject.

\section{Vertex superalgebras}\label{sec-vertex superalgebras}

In this section we recall some facts about vertex superalgebras, for details the reader is referred to \cites{Kac96, DeSole-Kac06}.

Let $V$ be a vector superspace, i.e., a vector space with a decomposition $V=V_{\bar{0}}\oplus V_{\bar{1}}$ for $\bar{0},\bar{1}\in \mathbb{Z}/2\mathbb{Z}$. We call $V_{\overline{0}}$ the even space and $V_{\overline{1}}$ the odd space. If $v\in V_{\alpha}$, one writes $p(v)=\alpha$ and calls it the parity of $v$.

The algebra $End V$ acquires a $\mathbb{Z}/2\mathbb{Z}$ grading by letting

$$(End V)_{\bar{\alpha}}=\{A\in End V: A(V_{\bar{\beta}})\subset V_{\bar{\alpha}+\bar{\beta}}\}$$

for $\bar{\alpha},\bar{\beta}\in \mathbb{Z}/2\mathbb{Z}$.

An $End V$-\emph{valued field} is a formal distribution of the form

$$a(z)=\sum_{n\in\mathbb{Z}}a_{(n)}z^{-n-1}, \hspace{0.3in} a_{(n)}\in EndV$$

such that for every $v\in V$, we have $a_{(n)}v=0$ for large enough $n$.

We now proceed to give two different definitions of a vertex algebra that we use freely through the text, for a proof of the equivalence of these definitions we refer to \cite{DeSole-Kac06}.

\begin{definition}\label{def-vertex1}

A vertex superalgebra consists of the data of a  vector superspace $V$ (the space of states) , $\left|0\right>\in V_{\bar{0}}$ is a vector (the vacuum vector), an even endomorphism $T$, and a parity preserving linear map $a\rightarrow Y(a,z)$ from $V$ to $EndV$-valued fields (the state-field correspondence). This data should satisfy the following set of axioms:

\begin{itemize}
\item Vacuum axioms:
\begin{gather*}
Y(\left|0\right>,z)=Id,\qquad Y(a,z)\left|0\right>|_{z=0}=a, \qquad T\left|0\right>=0.
\end{gather*}

\item Translation invariance:
\begin{equation}
[T,Y(a,z)]=\partial_{z}Y(a,z). \nonumber
\end{equation}

\item Locality:
\begin{equation}
(z-w)^{n}[Y(a,z),Y(b,w)]=0 \hspace{0.3in} n\gg 0. \nonumber
\end{equation}
\end{itemize}

\end{definition}

\fussy
Given a vertex superalgebra $V$ and $a\in V$ we expand the fields
\begin{align}
Y(a,z)=\sum_{n\in \mathbb{Z}}a_{(n)}z^{-n-1}
\end{align}
and we call the endomorphisms $a_{(n)}$ the \emph{Fourier modes} of $Y(a,z)$.
\sloppy

In any vertex algebra we have the following identity, known as the \emph{Borcherds identity}:
\small
\begin{align}\label{Borcherds identity}
&\sum_{j\in\mathbb{Z_{+}}}(-1)^{j}\binom{n}{j}\left(a_{(m+n-j)}\left(b_{(k+j)}c\right)-(-1)^{n}(-1)^{p(a)p(b)}b_{(n+k-j)}\left(a_{(m+j)}c\right)\right) \nonumber\\ 
&=\sum_{j\in\mathbb{Z_{+}}}\binom{m}{j}\left(a_{(n+j)}b\right)_{(m+k-j)}c, 
\end{align}
\normalsize
for all $a, b, c\in V$, $m, n, k\in \mathbb{Z}$. In fact this identity appeared in the original Borcherds definition of vertex algebras \cite{Borcherds86}.

Given a vertex algebra $V$ we define two operations:
\begin{gather}\label{def-lambda bracket and normally ordered product}
[a_{\lambda} b]=\sum_{j\geq 0}\frac{\lambda^{j}}{j!}a_{(j)}b, \qquad :ab:=a_{(-1)}b.
\end{gather}
The first is called the $\lambda$-bracket and the second is called the normally ordered product. These operations, the vacuum vector $\left|0\right>$ and the derivation $T$ determine the structure of the vertex algebra as follows from the next definition of vertex algebra.

We need to introduce the definition of a Lie conformal algebra.
\begin{definition}
A Lie conformal superalgebra $R=R_{\bar{0}}\oplus R_{\bar{1}}$ is a $\mathbb{Z}/2\mathbb{Z}$-graded $\mathbb{C}[\partial]$-module endowed with a parity preserving $\mathbb{C}$-bilinear $\lambda$-bracket
\begin{equation}
[._{\lambda}.]: R\otimes R\rightarrow R\otimes\mathbb{C}[\lambda]
\end{equation}
which satisfies:

\begin{itemize}
\item[(i)]sesquilinearity:
\begin{equation}\label{sesquilinearity}
[\partial a_{\lambda} b]=-\lambda[a_{\lambda}b],\;\; [ a_{\lambda}\partial b]=(\lambda+\partial)[a_{\lambda}b],
\end{equation} 
\item[(ii)]skewsymmetry:
\begin{equation}\label{skewsymmetry}
[b_{\lambda} a]=-(-1)^{p(a)p(b)}[a_{-\partial-\lambda}b], 
\end{equation}
\item[(iii)] Jacobi identity:
\begin{equation}\label{Jacobi identity}
[a_{\lambda}[b_{\mu}c]]=(-1)^{p(a)p(b)}[b_{\mu}[a_{\lambda}c]]+[[a_{\lambda}b]_{\lambda+\mu}c],
\end{equation} 
\end{itemize}
\end{definition}

\begin{definition}
A vertex algebra is a quintuple $(V,\left|0\right>,\partial,[._{\lambda}.],::)$ which satisfies the following three properties:
\item[(i)] $(V,\partial,[._{\lambda}.])$ is a Lie conformal algebra,
\item[(ii)] $(V,\left|0\right>,\partial,::)$ is a unital differential algebra satisfying
\begin{itemize}
\item quasi-commutativity:
\begin{align}\label{quasi-commutativity}
:ab:-(-1)^{p(a)p(b)}:ba:=\int_{-\partial}^{0}[a_{\lambda}b]d\lambda
\end{align}
for any $a,b\in V,$
\item quasi-associativity:
\small
\begin{align}\label{quasi-associativity}
::ab:c:-:a:bc::=:\left(\int_{0}^{\partial}d\lambda a\right)[b_{\lambda}c]:+(-1)^{p(a)p(b)}:\left(\int_{0}^{\partial}d\lambda b\right)[a_{\lambda}c]:
\end{align}
\normalsize
for any $a,b,c\in V,$
\end{itemize}
The integrals in the expression above should be interpreted in the following manner. First, expand the $\lambda$-bracket. Second, put the powers of $\lambda$ on the left, under the sign of integral. Finally, take the definite integral by the usual rules inside the parenthesis.\
The binary operation $::$ is called the normally ordered product of $V$.
\item[(iii)] The $\lambda$-bracket and the normally ordered product are related by the non-commutative Wick formula:

\begin{equation}\label{Wickformula}
[a_{\lambda}:bc:]=:[a_{\lambda}b]c:+(-1)^{p(a)p(b)}:b[a_{\lambda}c]:+\int_{0}^{\lambda}[[a_{\lambda}b]_{\mu}c]d\mu,
\end{equation}

for any $a,b,c\in V.$

\end{definition}

In this paper we are concerned with a special class of vertex superalgebras, known as conformal vertex algebras. We say that a vertex algebra $V$ is conformal if there is a $L\in V$ such that the Fourier modes of the corresponding field $Y(L,z)=\sum_{n\in\mathbb{Z}}L_{n}z^{-n-2}$ satisfy:
\begin{itemize}
\item[(i)] the operators $L_{n}$ form the Virasoro algebra
\begin{equation}\label{Virasoroalgebra}
[L_{n},L_{m}]=(m-n)L_{m+n}+\frac{c}{12}(m^{3}-m)\delta_{m,-n}\mathrm{Id}_{V}
\end{equation}
where $c\in\mathbb{C}$ is called the central charge,
\item[(ii)] $L_{-1}=T$,
\item[(iii)] $L_{0}$ is a diagonalizable operator with eigenvalues bounded below.
\end{itemize}

\begin{remark}
Using  (\ref{def-lambda bracket and normally ordered product}) note that (\ref{Virasoroalgebra}) is equivalent to:
\begin{equation}\label{lambda bracket of the Virasoro element}
[L_{\lambda}L]=(\partial+2\lambda)L+\frac{c\lambda^{3}}{12}.
\end{equation}
A field $L(z)$ satisfying this is called a Virasoro field. Note also that the condition (ii) above implies that $L(z)$ is an even field.
\end{remark}

If $V$ is conformal vertex algebra and $a\in V$ is an eigenvector of $L_{0}$, its eigenvalue  is called the conformal weight of $a$ and is denoted by $\Delta(a)=\Delta$. Moreover $a$ has conformal weight $\Delta$ if and only if 
\begin{equation*}
[L_{\lambda}a]=(\partial+\Delta\lambda)a+\mathcal{O}(\lambda^{2}).
\end{equation*}  
\noindent
In the case when $[L_{\lambda}a]=(\partial+\Delta\lambda)a$ the vector $a$ and the corresponding vertex operator $Y(a,z)$ are called primary. This is equivalent to $L_{n}a=\delta_{n,0}\Delta a$ for all $n\geq 0$.

\begin{remark}\label{remark-universal enveloping algebra}
An important feature of defining vertex algebras using Lie conformal algebras is that: vertex algebras are to conformal Lie algebras what associative algebras are to Lie
algebras in the following sense. For any Lie conformal algebra $R$ there exists a vertex
algebra $U(R)$ with an embedding of conformal algebras $\pi: R\hookrightarrow U(R)$ satisfying the usual universal property: for any other vertex algebra $V$ and a map $f: R \hookrightarrow V$, there exists a morphism of vertex algebras $g:U(R)\hookrightarrow V$ such that $f=g\circ \pi$. Moreover, the algebra $U(R)$ is constructed very similar as in the Lie algebra situation, any vector of $U(R)$ can be obtained by products of elements of $R$.

This parallel with the Lie algebra case justifies the presentation of the examples below. When we say that a certain set $A$ of vectors satisfying some prescribed $\lambda$-brackets generate a vertex algebra, we first construct the corresponding Lie conformal algebra and then we consider its universal enveloping vertex algebra, in particular any vector of this algebra is a combination of products of elements of $A$ and their derivatives.  There
are other situations (see subsections \ref{Shatashvili-Vafa algebra} and \ref{Shatashvili-Vafa Spin(7) algebra} ) when the $\lambda$-bracket of elements in $A$ is not linear in the elements
of $A$ but can be expressed as combinations of products of elements of $A$ and their
derivatives. In this case we say that the vertex algebra is non-linearly generated \cite{DeSoleKac05}.
\end{remark}

\begin{remark}\label{remark-OPE}
Note that the $\lambda$-bracket notation encodes the same information as the singular part of the operator product expansion (OPE) of fields in a quantum field theory in dimension two. In other words when we write $[{A}_{\lambda}B]=\sum_{j=0}^{N-1}\frac{\lambda^{j}}{j!}A_{(j)}B$, in physics notation this means that: 
\begin{align*}
A(z)B(w)\sim \sum_{j=0}^{N-1}\frac{(A_{(j)}B)(w)}{(z-w)^{j+1}},
\end{align*}
where $A(z)$, $B(w)$ and $(A_{(j)}B)(w)$ are the fields corresponding to the vectors $A$, $B$ and $A_{(j)}B$ under the state-field correspondence.
\end{remark}

\begin{example}[\cite{Kac96}]\label{Neveu-Schwarz}The $N=1$ (Neveu-Schwarz) superconformal vertex algebra.\\
The $N=1$ superconformal vertex algebra of central charge $c$ is an extension of the Virasoro algebra of central charge $c$ by an odd primary field $G$ of conformal weight $3/2$ i.e  it is generated by $L$ and $G$ with $\lambda$-brackets:
\begin{gather*}
[L_{\lambda}L]=(\partial+2\lambda)L+\frac{c}{12}\lambda^{3}, \quad [L_{\lambda}G]=(\partial+\frac{3}{2}\lambda)G, \quad [G_{\lambda}G]=2L+\frac{c}{3}\lambda^{2}.
\end{gather*}
\end{example}

\begin{example}[\cite{Kac96}]\label{N=2 superconformal algebra}The $N=2$ superconformal vertex algebra.\\
The $N=2$ superconformal vertex algebra of central charge $c$ is generated by the Virasoro field $L$ with $\lambda$-bracket (\ref{lambda bracket of the Virasoro element}), an even primary field $J$ of conformal weight $1$, and two odd primary fields $G^{\pm}$ of conformal weight $\tfrac{3}{2}$, with the $\lambda$-brackets given by:
\begin{gather*}
[{L}_{\lambda}J]=\left(\partial+\lambda\right)J,\quad [{L}_{\lambda}G^{\pm}]=\left(\partial+\frac{3}{2}\lambda\right)G^{\pm},\\
[{J}_{\lambda}G^{\pm}]=\pm G^{\pm},\quad [{J}_{\lambda}J]=\frac{c}{3}\lambda,\quad [{G^{\pm}}_{\lambda}G^{\pm}]=0,\\
[{G^{+}}_{\lambda}G^{-}]=L+\frac{1}{2}\partial J+J\lambda +\frac{c}{6}\lambda^{2}.
\end{gather*}
\end{example}

\begin{example}[\cite{KacWakimoto04}] The $N=4$ superconformal vertex algebra.\\
The $N=4$ superconformal vertex algebra is generated by a Virasoro field $L$, three primary even fields $J^{0}, J^{+}$ and $J^{-}$  of conformal weight 1, and four primary odd fields $G^{\pm}, \bar{G^{\pm}}$ of conformal weight $\frac{3}{2}$. The remaining non-zero $\lambda$-brackets are:
\begin{gather*}
[{J^{0}}_{\lambda}J^{\pm}]=\pm 2J^{\pm},\quad [{J^{0}}_{\lambda}J^{0}]=\frac{c}{3}\lambda, \quad [{J^{+}}_{\lambda}J^{-}]=J^{0}+\frac{c}{6}\lambda,\\
[{J^{0}}_{\lambda}G^{\pm}]=\pm G^{\pm},\quad [{J^{0}}_{\lambda}\bar{G}^{\pm}]=\pm\bar{G}^{\pm},\quad [{J^{+}}_{\lambda}G^{-}]=G^{+},\\
[{J^{-}}_{\lambda}G^{+}]=G^{-},\quad [{J^{+}}_{\lambda}\bar{G}^{-}]=-\bar{G}^{+},\quad [{J^{-}}_{\lambda}\bar{G}^{+}]=-\bar{G}^{-},\\
[{G^{\pm}}_{\lambda}\bar{G}^{\pm}]=\left(\partial+2\lambda\right)J^{\pm},\quad [{G^{\pm}}_{\lambda}\bar{G}^{\mp}]=L\pm\frac{1}{2}\partial J^{0}\pm J^{0}\lambda+\frac{c}{6}\lambda^{2}.
\end{gather*} 

\end{example}

\begin{example}\label{bc-betagamma system} The $bc-\beta\gamma$ system.\\
This vertex algebra is generated by four fields: $b$ and $c$ are odd fields, $\beta$ and $\gamma$ are even fields, and the non-trivial $\lambda$-brackets between the generators are:
\begin{gather*}
[{\beta}_{\lambda}\gamma]=1, \qquad [{b}_{\lambda}c]=1.
\end{gather*}
If we define $G=:c\beta:+:(\partial\gamma) b:$, $L=:(\partial\gamma)\beta:-\frac{1}{2}:c(\partial b):+\tfrac{1}{2}:(\partial c)b:$, $\{L,G\}$ generate an $N=1$ superconformal algebra of central charge $\frac{3}{2}$ (example \ref{Neveu-Schwarz}).  With respect to the Virasoro $L$ the conformal weight of the generators is as follows: 
\begin{equation*}
\Delta b=\frac{1}{2},\;\;\Delta c=\frac{1}{2},\;\;\Delta \gamma=0,\;\;\Delta\beta=1.
\end{equation*}
\end{example}

\subsection{The Shatashvili-Vafa \texorpdfstring{$G_{2}$}{} superconformal algebra} \label{Shatashvili-Vafa algebra}

The Shatashvili-Vafa $G_{2}$ superconformal algebra \cite{Shatashvili-Vafa95} is an extension of the $N=1$ superconformal algebra $\{L,G\}$ (Example \ref{Neveu-Schwarz}) by four fields $\{\Phi,K,X,M\}$ such that: $\Phi$ and $M$ are odd, $K$ and $X$ are even, $\Delta(\Phi)=\frac{3}{2}$, $\Delta(K)=2$, $\Delta(X)=2$ and $\Delta(M)=\frac{5}{2}$. Furthermore $\Phi$ and $K$ are primary fields. The $\lambda$-brackets are given by:
\begin{equation}
[{\Phi}_{\lambda}\Phi]=(-\frac{7}{2}) \lambda^{2}+6X,\;\;\; [{\Phi}_{\lambda}X]=-\frac{15}{2}\Phi\lambda-\frac{5}{2}\partial\Phi,\nonumber
\end{equation}
\begin{equation}
[{X}_{\lambda}X]=\frac{35}{24}\lambda^{3}-10X\lambda-5\partial X,\;\;\; [{G}_{\lambda}\Phi]=K,\nonumber
\end{equation}
\begin{equation}
[{G}_{\lambda}X]=-\frac{1}{2}G\lambda+M,\;\;\;[{G}_{\lambda}K]=3\Phi\lambda+\partial\Phi,\nonumber
\end{equation}
\begin{equation}
[{G}_{\lambda}M]=-\frac{7}{12}\lambda^{3}+\left(L+4X\right)\lambda+\partial X,\;\;\; [{\Phi}_{\lambda}K]=-3G\lambda-3\left(M+\frac{1}{2}\partial G\right),\nonumber
\end{equation}
\begin{equation}
[{\Phi}_{\lambda}M]=\frac{9}{2}K\lambda-\left(3:G\Phi:-\frac{5}{2}\partial K\right),\;\;\;[{X}_{\lambda}K]=-3K\lambda+3\left(:G\Phi:-\partial K\right),\nonumber
\end{equation}
\begin{equation}
[{X}_{\lambda}M]=-\frac{9}{4}G\lambda^{2}-\left(5M+\frac{9}{4}\partial G\right)\lambda+\left(4:GX:-\frac{7}{2}\partial M-\frac{3}{4}\partial^{2}G\right),\nonumber
\end{equation}
\begin{equation}
[{K}_{\lambda}K]=-\frac{21}{6}\lambda^{3}+6\left(X-L\right)\lambda+3\partial\left(X-L\right),\nonumber
\end{equation}
\begin{equation}
[{K}_{\lambda}M]=-\frac{15}{2}\Phi\lambda^{2}-\frac{11}{2}\partial\Phi\lambda+3\left(:GK:-2:L\Phi:\right),\nonumber
\end{equation}
\begin{eqnarray}
[{M}_{\lambda}M]&=&-\frac{35}{24}\lambda^{4}+\frac{1}{2}\left(20X-9L\right)\lambda^{2}+\left(10\partial X-\frac{9}{2}\partial L\right)\lambda+\left(\frac{3}{2}\partial^{2}X\right.\nonumber\\
&&\left.-\frac{3}{2}\partial^{2}L-4:GM:+8:LX:\right),\nonumber
\end{eqnarray}
\begin{equation}
[{L}_{\lambda}X]=-\frac{7}{24}\lambda^{3}+2X\lambda+\partial X,\;\;\; [{L}_{\lambda}M]=-\frac{1}{4}G\lambda^{2}+\frac{5}{2}M\lambda+\partial M. \nonumber
\end{equation}
\noindent
And the generators satisfy the following relation:
\begin{align}\label{ideal in the definition of the algebra}
0=4:GX:-2:\Phi K:-4\partial M-\partial^{2}G.
\end{align}

\begin{remark}
Unlike the previous examples the right hand side of some $\lambda$-brackets is non-linear in the generators. This is an important feature of the algebra.
\end{remark}
This superconformal algebra appeared as the chiral algebra associated to the sigma model with target a manifold with $G_{2}$ holonomy in \cite{Shatashvili-Vafa95} Its classical counterpart had been studied by Howe and Papadopoulos in \cite{Howe-Papadopoulos93}. In fact this algebra is a member of a two-parameter family $SW(\frac{3}{2}, \frac{3}{2}, 2)$ previously studied in \cite{Blumenhagen92} where the author found the family of all superconformal algebras which are extension of the super-Virasoro algebra, i.e., the $N=1$ superconformal algebra, by two primary supercurrents of conformal weights $\frac{3}{2}$ and $2$ respectively. It is a family parametrized by $(c,\varepsilon)$ ($c$ is the central charge and $\varepsilon$ the coupling constant) of non-linear $W$-algebras.

The Shatashvili-Vafa $G_{2}$ algebra is a quotient of $SW(\frac{3}{2},\frac{3}{2}, 2)$ with $c=\frac{21}{2}$ and $\varepsilon=0$, the relation (\ref{ideal in the definition of the algebra}) is precisely the one imposed by the quotient.

In \cite{Shatashvili-Vafa95}, this algebra was obtained as a free field realization in terms of seven free Bosons and seven free Fermions, and the relation (\ref{ideal in the definition of the algebra}) was trivially satisfied. The first to note that we should impose this relation if we define the algebra abstractly (it is necessary for the Jacobi identities to be checked) was Figueroa-O'Farrill \cite{OFarrill97}.

The Shatashvili-Vafa $G_{2}$ superconformal algebra can be obtained also as a quantum Hamiltonian reduction of $\operatorname{osp}\left(4|2\right)$ \cite{HeluaniRodriguez15}.

\begin{remark}\label{Tri-critical Ising model}
Note that if we define $\tilde{X}:=-\frac{1}{5}X$ and $\tilde{\Phi}:=\frac{i}{\sqrt{15}}$, then $\{\tilde{X},\tilde{\Phi}\}$ generate an $N=1$ superconformal algebra of central charge $c=\frac{7}{10}$. This $N=1$ superconformal algebra at this value of the central charge is known as the tri-critical Ising model. Therefore the Shatashvili-Vafa algebra contains two $N=1$ superconformal subalgebras, the original one generated by $\{L,G\}$ and the subalgebra generated by $\{X,\Phi\}$.
\end{remark}

\begin{remark}\label{remark-the SV algebra is generated in conf weight 3/2}
Note that all the generators of the Shatashvili-Vafa $G_{2}$ algebra can be obtained as the $\lambda$-brackets of the generators in conformal weight $\frac{3}{2}$ i.e., $\Phi$ and $G$.
\end{remark}
The Shatashvili-Vafa $G_{2}$ superconformal algebra has an automorphism given by $L\rightarrow L$, $G\rightarrow G$, $\Phi\rightarrow -\Phi$, $K\rightarrow -K$, $X\rightarrow X$, $M\rightarrow M$. It is interesting to note that the fixed vectors $\{L, G, X, M\}$ generate a closed subalgebra, this algebra is a member of a one parameter family  $SW(\tfrac{3}{2},2)$ of superconformal algebras, this one corresponds to the parameter $\tfrac{21}{2}$. Another member of this family is the next example.

\subsection{The Shatashvili-Vafa \texorpdfstring{$\mathrm{Spin}(7)$}{} superconformal algebra} \label{Shatashvili-Vafa Spin(7) algebra}
The Shatashvili-Vafa $\mathrm{Spin}(7)$ superconformal algebra \cite{Shatashvili-Vafa95} is an extension of the $N=1$ superconformal algebra $\{L,G\}$ (Example \ref{Neveu-Schwarz}) by an even field $\bar{X}$ of conformal weight $2$ (not primary) and an odd field $\bar{M}$ of conformal weight $\frac{5}{2}$  (not primary). The $\lambda$-bracket are given by:

\begin{equation*}
[{\bar{X}}_{\lambda}\bar{X}]=\frac{8}{3}\lambda^{3}+16\bar{X}\lambda+8\partial \bar{X}, \quad [{L}_{\lambda}\bar{X}]=\frac{1}{3}\lambda^{3}+\left(\partial+2\lambda\right)\bar{X},
\end{equation*}
\begin{equation*}
[{G}_{\lambda}\bar{X}]=\frac{1}{2}G\lambda+\bar{M}, \quad [{G}_{\lambda}\bar{M}]=\frac{2}{3}\lambda^{3}-\left(L-4\bar{X}\right)\lambda+\partial\bar{X},
\end{equation*}
\begin{equation*}
[{\bar{X}}_{\lambda}\bar{M}]=-\frac{15}{4}G\lambda^{2}-\left(\frac{15}{4}\partial G-8\bar{M}\right)\lambda+\frac{11}{2}\partial\bar{M}-\frac{5}{4}\partial^{2}G-6:G\bar{X}:,
\end{equation*}
\begin{align*}
[{\bar{M}}_{\lambda}\bar{M}]=&-\frac{8}{3}\lambda^{4}-\left(\frac{15}{2}L+16\bar{X}\right)\lambda^{2}-\left(\frac{15}{2}\partial L+16\partial\bar{X}\right)\lambda\\
&-\left(\frac{5}{2}\partial^{2}\bar{X}+\frac{5}{2}\partial^{2}L+12:L\bar{X}:-6:G\bar{M}:\right).
\end{align*}
Similar to the example above this algebra appeared as the chiral algebra associated to the sigma model with target a manifold with $\mathrm{Spin}(7)$ holonomy in \cite{Shatashvili-Vafa95} and its classical counterpart had been studied by Howe and Papadopoulos in \cite{Howe-Papadopoulos93}. This algebra also belongs to a family $SW(\frac{3}{2},2)$ of superconformal algebras with one parameter $c$ (the central charge), the Shatashvili-Vafa $\mathrm{Spin}(7)$ algebra corresponds to $c=12$, see (\cite{OFarrillSchrans91}, \cite{OFarrillSchrans92}, \cite{OFarrill97}).

\section{The Chiral de Rham Complex}\label{sec-the Chiral de Rham Complex}

In this section we review the construction of the chiral de Rham complex \cite{Malikov-Schechtman-Vaintrob99} of a manifold $M$ and remember some of its properties. We also recall a theorem from \cite{Ekstrand-Heluani-Kallen-Zabzine13} that provides two different embeddings of the space of differential forms $\Omega^{*}(M)$ into global sections of CDR in the case when $M$ is a smooth Riemannian manifold. The chiral de Rham complex can be defined in the algebraic, analytic and smooth category, in this paper we are only concerned with the smooth setting.

Let $M$ be a $C^{\infty}$ manifold, the CDR of $M$ is a sheaf of vertex algebras, let us begin with a local description. On a coordinate chart of $M$ with coordinates $\{x^{i}\}$ the sections of CDR are described as follows: for each coordinate $x^{i}$ we have an even field $\gamma^{i}$, corresponding to each  vector field $\frac{\partial}{\partial x^{i}}$ and each  differential form $dx^{i}$ we have an odd fields $b_{i}$ and $c^{i}$ respectively, we also have an even field $\beta_{i}$ for each $\gamma^{i}$, and $\{\gamma^{i},\beta_{i},b_{i},c^{i}\}$ form a $bc-\beta\gamma$ system (see example \ref{bc-betagamma system}).

Now we need to take care of what happens if we change coordinates. A crucial observation in \cite{Malikov-Schechtman-Vaintrob99} is that for any change of coordinates $x^{i}\rightarrow y^{i}(x^{j})$ there exists an automorphism of CDR on the intersection of the coordinate charts. Using this it is possible to glue on intersections and construct a global sheaf. More precisely if $\tilde{\gamma}^{i}$, $\tilde{\beta}_{i}$, $\tilde{c}^{i}$ and $\tilde{b}_{i}$ are the fields associated to the coordinates $y^{i}$, they are expressed in terms of the fields in the coordinates $x^{i}$ as follows: 
\begin{align}\label{change of coordinates bc-betagamma}
\tilde{\gamma}^{i}=&y^{i}(\gamma),\nonumber\\
\tilde{c}^{i}=&\frac{\partial y^{i}}{\partial x^{j}}(\gamma)c^{j},\nonumber\\
\tilde{b}_{i}=&\frac{\partial x^{j}}{\partial y^{i}}(y(\gamma))b_{j},\\
\tilde{\beta}_{i}=&\beta_{j}\frac{\partial x^{j}}{\partial y^{i}}(y(\gamma))+\frac{\partial^{2}x^{k}}{\partial y^{i}\partial y^{l}}(y(\gamma))\frac{\partial y^{l}}{\partial x^{r}}c^{r}b_{k}.\nonumber
\end{align}
We see that the $\gamma^{i}$ transform as coordinates do, the $b_{i}$ transform as vector fields, the $c^{i}$ change as differential forms, however the $\beta_{i}$ change in a non-tensorial manner. In the remaining of this section we collect some results about the existence of global sections of CDR. 

\begin{remark}
The first thing to note is that the multiplication (normally ordered product) in CDR is neither associative nor commutative by the own nature of CDR: CDR is a sheaf of vertex algebras. Even though we can define a multiplication map $\mathcal{O}_{M}\times CDR\rightarrow CDR$, it is not associative. This implies that it is very difficult to construct global sections of CDR.\\
From (\ref{change of coordinates bc-betagamma}) follows that functions and vector fields of $M$ give rise to global sections. However, trying to construct sections of CDR from other tensors on $M$ is not trivial because of the terms on the RHS of the quasi-associativity
(\ref{quasi-associativity}) appearing under a change of coordinates.\\
In \cite{Ben-Zvi-Heluani-Szczesny} it was noticed that one can use the Levi-Civita connection on M to counteract these quasi-associativity terms in order to construct sections of CDR associated to differential two-forms. In \cite{Ekstrand-Heluani-Kallen-Zabzine13} this was generalized to higher order forms.
\end{remark}

We can define in a local chart the following field $G=(\partial\gamma^{i})b_{i}+\beta_{i}c^{i}$ (see example \ref{bc-betagamma system}) and ask if this field is globally defined. The answer \cite{Malikov-Schechtman-Vaintrob99}  is the following, if we change coordinates  $x^{i}\rightarrow y^{i}(x^{j})$, and let $\tilde{G}$ be the field in the coordinates $y^{i}$ we obtain:
\begin{equation*}
\tilde{G}=G+\partial\left(\frac{\partial}{\partial y^{r}}\left[Tr\log\left(\frac{\partial x^{i}}{\partial y^{j}}\right)\right]\tilde{c}^{r}\right).
\end{equation*}
Note that if $M$ is orientable $G$ is globally defined. In general the field $G$ is not globally defined however the Fourier modes $G_{(0)}$ is globally defined because $\tilde{G}$ differs from $G$ by a derivative of a field. Moreover we have $(G_{(0)})^{2}=\partial$. This endomorphism $G_{(0)}$ is called the supersymmetric generator, and it is an odd derivation of all the $n$-products i.e., $G_{(0)}({a}_{(n)}b)={(G_{(0)}a)}_{(n)}b+(-1)^{p(a)}{a}_{(n)}{(G_{(0)}b)}$, where $a,b$ are two sections of the CDR.

In \cite{Ekstrand-Heluani-Kallen-Zabzine13} were produced two different embeddings of the space of differential forms $\Omega^{*}(M)$ into global sections of CDR in the case when $M$ is a smooth Riemannian manifold. These embeddings depend on a choice of a metric and are explicitly given in terms of the corresponding Levi-Civita connection. We proceed to recall their construction:

Let $g$ be a Riemannian metric on $M$ and let $\{x^{i}\}$ be a local coordinate system. Define local sections $e^{i}_{\pm}$ by the equations:

\begin{equation*}
e^{i}_{+}:=\dfrac{g^{ij}b_{j}+c^{i}}{\sqrt{2}},\;\;\;\;  i e^{i}_{-}:=\dfrac{i\left(g^{ij}b_{j}-c^{i}\right)}{\sqrt{2}}.
\end{equation*}

Note that

\begin{equation}\label{commutation rules of the e's}
[{e^{i}_{+}}_{\lambda}e^{j}_{+}]=g^{ij},\;\;[{e^{i}_{-}}_{\lambda}e^{j}_{-}]=-g^{ij},\;\;[{e^{i}_{+}}_{\lambda}e^{j}_{-}]=0.
\end{equation}

Define 

\begin{equation*}
F_{\pm (0)}:=1,\;\;\;\; F_{+(k)}^{i_{1}\dots i_{k}}:=e^{i_{1}}_{+}\dots e^{i_{k}}_{+},\;\;\;\; F_{-(k)}^{i_{1}\dots i_{k}}:=i^{k}e^{i_{1}}_{-}\dots e^{i_{k}}_{-}. 
\end{equation*}

(Here and further we adopt the following convention: when a product of fields appears without any colon or parenthesis, we read the normal product from right to left, recalling that it is not associative.)

Define $G_{\pm (n,n)}=F_{\pm(n)}$ and for each $1\leq s\leq \lfloor \frac{n}{2} \rfloor$ we define

\begin{equation*}
G^{i_{1}\dots i_{n}}_{\pm(n,n-2s)}=\Gamma^{i_{1}}_{k_{1}l_{1}}g^{i_{2}k_{1}}\partial \gamma^{l_{1}}\dots\Gamma^{i_{2s-1}}_{k_{2s-1}l_{2s-1}}g^{i_{2s}k_{2s-1}}\partial\gamma^{l_{2s-1}}F^{i_{2s+1}\dots i_{n}}_{\pm(n-2s)},
\end{equation*} 
\noindent
where $\Gamma^{i}_{jk}$ are the Christoffel symbols of the Levi-Civita connection associated to $g$, $\lfloor \cdot \rfloor$ denotes the integer part, the subscript between parenthesis denotes how many $e$'s are present in the expression.

Define the numbers $T_{r,s}$ as the coefficients of the Bessel polynomials:
\begin{equation*}
y_{r}(x)=\sum^{r}_{s=0}T_{r,s}x^{s}=\sum^{r}_{s=0}\dfrac{(r+s)!}{(r-s)!s!2^{s}}x^{s},
\end{equation*}
\noindent
and let $T_{r,s}:=0$ when $s<0$ or $s>t$. The following theorem was proved in \cite{Ekstrand-Heluani-Kallen-Zabzine13}.
\begin{theorem}\cite[Theorem 6.1]{Ekstrand-Heluani-Kallen-Zabzine13}\label{theorem Reimundo}
Let $(M, g)$ be a Riemannian manifold. For any differential form $w\in\Omega^{*}(M)$ locally described by $w_{i_{1}}\dots i_{n}dx^{i_{1}}\wedge\dots\wedge dx^{i_{n}}$ define
\begin{equation*}
J_{\pm q}:=\frac{1}{n!}w_{i_{1}\dots i_{n}}E^{i_{1}\dots i_{n}}_{\pm(n)},
\end{equation*}
\begin{equation*}
E_{\pm(n)}:=\sum_{s=0}^{[\frac{n}{2}]}T_{n-s,s}G_{\pm(n,n-2s)}.
\end{equation*}
\noindent
Then $J_{\pm q}$ are well defined sections of CDR.
\end{theorem}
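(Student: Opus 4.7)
The plan is to verify directly that, under any coordinate change $x^i \to y^i(x^j)$, the local expression $J_{\pm q} = \frac{1}{n!}\, w_{i_1\cdots i_n}\, E^{i_1\cdots i_n}_{\pm(n)}$ is invariant once $w$ is transformed tensorially. By linearity in $w$, this reduces to proving that the tensor-valued section $E^{i_1\cdots i_n}_{\pm(n)}$ transforms contravariantly, namely
\[
\tilde E^{i_1\cdots i_n}_{\pm(n)} \;=\; \frac{\partial y^{i_1}}{\partial x^{j_1}} \cdots \frac{\partial y^{i_n}}{\partial x^{j_n}}\, E^{j_1\cdots j_n}_{\pm(n)},
\]
a property that fails for each individual summand $G_{\pm(n,n-2s)}$ and must emerge only after summing with the Bessel-polynomial weights $T_{n-s,s}$.

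The first step is to show that the building block $e^i_\pm = (g^{ij}b_j \pm c^i)/\sqrt 2$ transforms as an honest vector field, $\tilde e^i_\pm = \tfrac{\partial y^i}{\partial x^j}(\gamma)\, e^j_\pm$, without any quantum anomaly. This follows from (\ref{change of coordinates bc-betagamma}) together with the fact that $\gamma$ and $b$ are mutually commutative under the $\lambda$-bracket, so no reordering corrections arise; crucially, $e^i_\pm$ contains no $\beta$, whose transformation is the only non-tensorial one. The second step is to compute the transformation of the normally ordered string $F^{i_1\cdots i_n}_{\pm(n)}$. Beyond the expected tensorial part, it picks up anomalies from two sources: pairwise reordering of the $e$'s via the $\lambda$-brackets (\ref{commutation rules of the e's}), which contracts each pair $e^j_\pm e^k_\pm$ into $\pm g^{jk}$; and the non-associativity of the normally ordered product governed by (\ref{quasi-associativity}), which produces $\partial\gamma$-valued corrections. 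The third step is to compute the transformation of the Christoffel-dressed corrections $G_{\pm(n,n-2s)}$; the inhomogeneous part $\tfrac{\partial y^i}{\partial x^a}\tfrac{\partial^2 x^a}{\partial y^j\partial y^k}$ of $\tilde\Gamma^i_{jk}$ produces anomalies of exactly the same schematic shape (metric) $\times$ (second derivative of the transition) $\times$ $\partial\gamma$ $\times$ ($F$ of lower rank).

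The main obstacle is then combinatorial: showing that the anomalies from $F$-reordering at level $s{+}1$ cancel those from $\Gamma$-non-tensoriality at level $s$ in the weighted sum $E_{\pm(n)} = \sum_s T_{n-s,s}\, G_{\pm(n,n-2s)}$. I would attack this by induction on $n$, writing $E_{\pm(n)}$ recursively as a normally ordered product of $e$ with $E_{\pm(n-1)}$ plus Christoffel correction terms, and matching anomaly coefficients against the Bessel polynomial recurrence $y_{n+1}(x) = (2n+1)\, x\, y_n(x) + y_{n-1}(x)$. The factor $(2n+1)$ should arise from counting the ways in which a newly adjoined $e$ can contract with the existing $e$'s in the string; verifying this count exactly, and reconciling it with the coefficient produced on the $\Gamma$-anomaly side, is the delicate technical core of the proof. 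Base cases $n = 0,1$ are immediate since $E_{\pm(0)} = 1$ and $E^i_{\pm(1)} = e^i_\pm$ is already tensorial by the first step.
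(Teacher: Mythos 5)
This theorem is not proved in the present paper at all: it is imported verbatim from \cite{Ekstrand-Heluani-Kallen-Zabzine13} (Theorem 6.1 there), so there is no in-paper proof to compare against. Your strategy is nevertheless the right one, and it is essentially the strategy of the original reference: verify invariance under $x^i\to y^i(x^j)$ directly, observe that $e^i_{\pm}$ is genuinely tensorial because it contains no $\beta$ (the only field with a non-tensorial transformation in (\ref{change of coordinates bc-betagamma})) and because $[{e^i_{\pm}}_{\lambda}f(\gamma)]=0$, and then show that the quasi-associativity corrections from pulling Jacobian factors through the string $F_{\pm(n)}$ cancel against the inhomogeneous part of $\tilde\Gamma^i_{jk}$ in the dressed terms $G_{\pm(n,n-2s)}$, with the Bessel coefficients $T_{n-s,s}$ tuned to make the cancellation work level by level.

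That said, as written this is a plan rather than a proof: the entire content of the theorem lives in the step you yourself flag as ``the delicate technical core,'' namely the exact matching of the anomaly coefficients with the Bessel recurrence. Two specific cautions there. First, the coefficients appearing in $E_{\pm(n)}$ are the diagonal entries $T_{n-s,s}$, not $T_{n,s}$, so the relevant identity is not literally the three-term recurrence $y_{n+1}=(2n+1)xy_n+y_{n-1}$ but the recursion it induces on $T_{n-s,s}$ as both $n$ and $s$ vary; your heuristic that the factor $(2n+1)$ counts contractions of a newly adjoined $e^{i_0}$ with the existing string should be checked against this, since the naive count of available partners is $n$, not $2n+1$, and the discrepancy is absorbed by symmetrization over the antisymmetric index slots and by the $\tfrac{1}{2^s}$ in $T_{r,s}$. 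Second, the reordering corrections come with a sign $\pm g^{jk}$ depending on the chirality (see (\ref{commutation rules of the e's})), and the definition $F^{i_1\dots i_k}_{-(k)}=i^k e^{i_1}_{-}\cdots e^{i_k}_{-}$ carries compensating factors of $i$; your induction must track these so that both the $+$ and $-$ families close with the \emph{same} coefficients $T_{n-s,s}$. Until the inductive step is actually carried out with these bookkeeping details, the argument is a correct outline of the known proof but not a self-contained one.
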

\begin{corollary}
$G_{(0)}\left(J_{\pm q}\right)$ are also well defined sections of CDR.
\end{corollary}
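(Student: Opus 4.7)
The statement is essentially an immediate consequence of the fact, already recalled in the paragraph just before Theorem~\ref{theorem Reimundo}, that even though the local field $G=(\partial\gamma^{i})b_{i}+\beta_{i}c^{i}$ is not intrinsic, its zero Fourier mode $G_{(0)}$ is coordinate-independent and therefore defines a global endomorphism of the sheaf CDR. The plan is to leverage this, together with Theorem~\ref{theorem Reimundo}, without any further computation.

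First, I would reduce the claim to the globality of $G_{(0)}$. In any vertex algebra the sesquilinearity of the $n$-products gives $(\partial A)_{(n)}=-n\,A_{(n-1)}$ for all $n\in\mathbb{Z}$, so $(\partial A)_{(0)}=0$ for any field $A$. Combined with the transformation formula
\[
\tilde{G}=G+\partial\!\left(\tfrac{\partial}{\partial y^{r}}\!\left[\operatorname{Tr}\log\!\left(\tfrac{\partial x^{i}}{\partial y^{j}}\right)\right]\tilde{c}^{\,r}\right),
\]
this gives $\tilde{G}_{(0)}=G_{(0)}$ on the overlap of any two coordinate charts. Hence $G_{(0)}$ patches to a well-defined operator on the sheaf CDR.

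Second, I would use the derivation property of $G_{(0)}$ on all $n$-products, stated right after the transformation formula: $G_{(0)}(a_{(n)}b)=(G_{(0)}a)_{(n)}b+(-1)^{p(a)}a_{(n)}(G_{(0)}b)$. This makes $G_{(0)}$ a morphism of sheaves of vertex (super)algebras in a weak sense, and in particular it sends the space of global sections of CDR to itself. Since by Theorem~\ref{theorem Reimundo} the fields $J_{\pm q}$ lie in that space, so do $G_{(0)}(J_{\pm q})$. There is no real obstacle here: the entire content of the corollary is absorbed into the coordinate-invariance of $G_{(0)}$, which was established prior to Theorem~\ref{theorem Reimundo}.
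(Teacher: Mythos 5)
Your proof is correct and follows essentially the same route as the paper: the paper's own justification is precisely that $G_{(0)}$ is globally well defined because $\tilde{G}-G$ is a total derivative (so its zero mode vanishes), hence applying $G_{(0)}$ to the global sections $J_{\pm q}$ yields global sections. The extra detail you supply via sesquilinearity, $(\partial A)_{(0)}=0$, is exactly the implicit step in the paper's discussion.
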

This follows from the discussion above about the supersymmetric generator $G_{(0)}$. In fact this corollary was implicit in the original form of the Theorem \ref{theorem Reimundo} in \cite{Ekstrand-Heluani-Kallen-Zabzine13} where the authors consider the CDR as a supersymmetric sheaf of vertex superalgebras.

As an exemplification of the Theorem \ref{theorem Reimundo} and because they will be used in section \ref{CDR of a G2 manifold} we write the sections of CDR that correspond to $2$, $3$ and $4$ forms:

if $w$ is a $2$-form then
\begin{align}\label{correction2form}
J_{\pm q}=\pm\frac{1}{2}w_{ij}e^{i}_{\pm}e^{j}_{\pm}+\frac{1}{2}\Gamma^{i}_{jk}g^{jl}w_{il}\partial\gamma^{k},
\end{align}
if $w$ is a $3$-form then
\begin{align}\label{correction3form}
J_{+q}=\frac{1}{6}w_{ijk}e^{i}_{+}e^{j}_{+}e^{k}_{+}+\frac{1}{2}w_{ijk}\Gamma_{mn}^{i}g^{jm}\partial \gamma^{n}e^{k}_{+},
\end{align} 
\begin{align*}
J_{-q}=\frac{(-i)}{6}w_{ijk}e^{i}_{-}e^{j}_{-}e^{k}_{-}+\frac{i}{2}w_{ijk}\Gamma_{mn}^{i}g^{jm}\partial \gamma^{n}e^{k}_{-},
\end{align*} 
if $w$ is a $4$-form then 
\begin{align}\label{correction4form}
J_{+q}=&\frac{1}{24}w_{ijkl}e^{i}_{+}e^{j}_{+}e^{k}_{+}e^{l}_{+} + \frac{1}{4}w_{ijkl}\Gamma_{mn}^{i}g^{jm}\partial\gamma^{n}e^{k}_{+}e^{l}_{+}\nonumber\\
&+\frac{1}{8}w_{ijkl}\Gamma_{m_{1}n_{1}}^{i}g^{jm_{1}}\partial\gamma^{n_{1}}\Gamma_{m_{2}n_{2}}^{k}g^{lm_{2}}\partial \gamma^{n_{2}},
\end{align}
\begin{align*}
J_{-q}=&\frac{1}{24}w_{ijkl}e^{i}_{-}e^{j}_{-}e^{k}_{-}e^{l}_{-} + \frac{(-1)}{4}w_{ijkl}\Gamma_{mn}^{i}g^{jm}\partial\gamma^{n}e^{k}_{-}e^{l}_{-}\nonumber\\
&+\frac{1}{8}w_{ijkl}\Gamma_{m_{1}n_{1}}^{i}g^{jm_{1}}\partial\gamma^{n_{1}}\Gamma_{m_{2}n_{2}}^{k}g^{lm_{2}}\partial \gamma^{n_{2}}.
\end{align*}

\begin{theorem}\cite[section 7.1 and 7.2]{Ekstrand-Heluani-Kallen-Zabzine13}
Let $M$ be a Calabi-Yau manifold, $\omega$ the K\"ahler form, let $J_{\pm q}$ be the global sections given by Theorem~\ref{theorem Reimundo} corresponding to $\omega$, then the two sets of sections of CDR $\{J_{\pm q}, G_{(0)}(J_{\pm q})\}$ generate two commuting copies of the $N=2$ superconformal algebra of central charge $\tfrac{3}{2} \dim M$. Furthermore if we assume that $M$ is a Calabi-Yau threefold, let $\Omega$ and $\bar\Omega$ denote the holomorphic volume form and its complex conjugate respectively. Let $X_{\pm}$ and $\bar X_{\pm}$ be the global sections given by Theorem~\ref{theorem Reimundo} corresponding to $\Omega$ and $\bar\Omega$ respectively, then the two sets of sections of CDR $\{J_{\pm q}, G_{(0)}(J_{\pm q}),X_{\pm}, G_{(0)}(X_{\pm}), \bar X_{\pm}, G_{(0)}(\bar X_{\pm})\}$ generate two commuting copies of the Odake algebra \cite{Odake89}.
\end{theorem}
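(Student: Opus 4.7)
The approach I would take is as follows. By Theorem \ref{theorem Reimundo} the sections $J_{\pm q}$ are globally well-defined, and by the corollary following it so are $G_{(0)}(J_{\pm q})$, so the candidates for generators are in hand. The first observation is that the two copies automatically commute. By (\ref{commutation rules of the e's}) we have $[{e^i_+}_\lambda e^j_-]=0$, and the correction terms in (\ref{correction2form}) involve only $\partial\gamma^i$, Christoffel symbols, and $\omega_{ij}$, which live in the $bc$-$\beta\gamma$ subalgebra common to both copies but bracket trivially against the $e_\pm$-part of the opposite sign. Since $G_{(0)}$ is an odd derivation of all $n$-products and respects the $\pm$ splitting when applied to the building blocks $e^i_\pm$, $\gamma^i$, $\beta_i$, no mixed term can ever appear, and hence $[\{J_{+q},G_{(0)}(J_{+q})\}_\lambda \{J_{-q},G_{(0)}(J_{-q})\}] = 0$.

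For one copy, the plan is to identify $J_{+q}$ with the $U(1)$ current of the $N=2$ algebra (it has conformal weight $1$, matching $J$ of Example \ref{N=2 superconformal algebra}) and to identify $G_{(0)}(J_{+q})$, which is odd of conformal weight $3/2$, with one of the two supercurrents once combined with the standard supersymmetry generator whose zero mode is $G_{(0)}$. Explicitly, I would take $G^\pm := \tfrac{1}{2}(G \pm i G_{(0)}(J_{+q}))$ for an appropriately chosen local supercurrent $G$, and verify that the resulting quadruple $\{L,J_{+q},G^+,G^-\}$ closes. The Virasoro element $L$ is forced to appear in $[G^+{}_\lambda G^-]$, which provides its definition and the value of the central charge.

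The core computation is the evaluation of $[J_{+q\,\lambda}J_{+q}]$, $[J_{+q\,\lambda}G_{(0)}(J_{+q})]$, and $[G_{(0)}(J_{+q})_\lambda G_{(0)}(J_{+q})]$ via the non-commutative Wick formula (\ref{Wickformula}) and quasi-associativity (\ref{quasi-associativity}). I would work in holomorphic coordinates $(z^\alpha,\bar z^{\bar\beta})$, in which $\omega = i g_{\alpha\bar\beta}\,dz^\alpha\wedge d\bar z^{\bar\beta}$ and only the pure Christoffel symbols $\Gamma^\alpha_{\beta\gamma}$ and $\Gamma^{\bar\alpha}_{\bar\beta\bar\gamma}$ are nonzero. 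Together with $\nabla\omega = 0$ (Kähler condition) and the Ricci-flatness of $M$, this collapses a large fraction of the Wick terms: the Ricci tensor is precisely the obstruction to the cancellation of the anomaly-type pieces produced when two covariant derivatives meet, so its vanishing is what makes the central charge come out to $\tfrac{3}{2}\dim M$ with no residual corrections. The OPEs of Example \ref{N=2 superconformal algebra} can then be checked term by term.

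For the Odake part in the Calabi-Yau threefold case, I would apply the same method to $X_\pm$ and $\bar X_\pm$ built from $\Omega,\bar\Omega$ via (\ref{correction3form}). Holomorphicity of $\Omega$ (its only nonzero components have all indices holomorphic) severely restricts which contractions against $\omega$, $\Gamma$, and $e^i_\pm$ can survive, and $\nabla\Omega=0$ (equivalent to the Calabi-Yau condition) kills the first-derivative corrections. The extra brackets $[J_{+q\,\lambda}X_+]$, $[X_{+\lambda}\bar X_+]$, and their supersymmetric partners then reduce to purely algebraic identities involving $\Omega\wedge\bar\Omega$ proportional to the volume form, giving the defining OPEs of the Odake algebra. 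I expect the main obstacle to be computational bookkeeping: each Wick expansion produces many terms, and controlling them requires repeated use of the Bianchi identities and the symmetries of the Riemann tensor whenever two covariant derivatives are commuted. The essential simplification compared with the $G_2$ analysis done in the body of the paper is the holomorphic/anti-holomorphic splitting, which makes a large number of mixed contractions vanish outright rather than requiring delicate algebraic cancellations.
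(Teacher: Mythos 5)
This theorem is not proved in the paper at all: it is quoted verbatim from \cite{Ekstrand-Heluani-Kallen-Zabzine13} (sections 7.1 and 7.2) as background for the $G_2$ conjecture, so there is no in-paper proof to compare your proposal against. Judged on its own terms, your outline follows the same general strategy that the paper later deploys for the $G_2$ case (explicit Wick-formula computations in adapted coordinates, with Ricci-flatness and parallelism of the defining forms driving the cancellations), and that part is reasonable.

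There is, however, one step that would fail as written. You claim the commutativity of the two copies is automatic because $[{e^i_+}_\lambda e^j_-]=0$ and because ``$G_{(0)}$ respects the $\pm$ splitting.'' It does not. The sections $J_{\pm q}$ themselves contain no $\beta_i$, so $[{J_{+q}}_\lambda J_{-q}]=0$ is indeed immediate; but $G_{(0)}$ is the zero mode of $G=c^i\beta_i+\partial\gamma^i b_i$, and applying it to $e^i_\pm$ produces terms containing $\beta_i$. Since $\beta_i$ has a nontrivial $\lambda$-bracket with every function of $\gamma$ --- in particular with the coefficients $\omega_{ij}(\gamma)$, $g^{ij}(\gamma)$, $\Gamma^i_{jk}(\gamma)$ appearing in $J_{-q}$ --- the bracket $[{G_{(0)}(J_{+q})}_\lambda J_{-q}]$ is \emph{not} zero term by term. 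Its vanishing is a genuine computation requiring the geometry: in the paper's $G_2$ analogue this is exactly the content of subsections \ref{G+withG-} and \ref{G+withPhi-}, where Ricci-flatness (Lemma \ref{lemmaG2Ricciflat}) and curvature contraction identities are needed to kill the cross terms. Relatedly, your proposed supercurrents $G^\pm=\tfrac12(G\pm i\,G_{(0)}(J_{+q}))$ built from the \emph{full} local supercurrent $G$ cannot work, because $G=G_++G_-$ splits into pieces belonging to the two copies; using all of $G$ contaminates the plus copy with minus-copy content and destroys the claimed commutativity. You need the plus-half $G_+$ (extracted, e.g., as in the paper's definition $G_\pm=-\tfrac13({\Phi_\pm}_{(1)}K_\pm)$ in the $G_2$ setting) in that formula.
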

 
\section{\texorpdfstring{$G_{2}$}{} holonomy manifolds}\label{sec-G2 holonomy manifolds}

In this section we review some properties of the manifolds with $G_{2}$ holonomy that will be used extensively in the computations. The references for this section are \cites{Joyce07,salamon89}.

The Lie group $G_{2}$ can be defined as the group of linear automorphism of $\mathbb{R}^{7}$ that preserves the cross product, where we identify $\mathbb{R}^{7}=Im(\mathbb{O})$, and the cross product operation is induced from the octonion multiplication $u\times v=im(\bar{v}.u)$, $u,v\in \mathbb{O}$. The cross product operation is encoded by a 3-form $\varphi_{0}\in\wedge^{3}(\mathbb{R}^{7})^{*}$ defined as 
\begin{align*}
\varphi_{0}(u,v,w)=<u\times v,w>.
\end{align*}

If we choose coordinates $\left(x_{1},\dots,x_{7}\right)$ on $\mathbb{R}^{7}$ and denote by $dx_{ij\dots k}$ the exterior form $dx_{i}\wedge dx_{j}\wedge\dots\wedge dx_{k}$ on $\mathbb{R}^{7}$,  $\varphi_{0}$ can be written as:

\begin{align*}
\varphi_{0}=dx_{123}+dx_{145}+dx_{167}+dx_{246}-dx_{257}-dx_{347}-dx_{356}.
\end{align*}
In the other way, the identity
\begin{equation}\label{innerproduct}
6<u,v>dvol_{7}=(u\lrcorner \varphi_{0})\wedge(v\lrcorner \varphi_{0})\wedge\varphi_{0},
\end{equation}
\noindent
expresses the inner product of $\mathbb{R}^{7}$ in terms of the volume form of $\mathbb{R}^{7}$ and the $3$-form $\varphi_{0}$. Observe that the Lie algebra $\rm{g}_{2}$ of the Lie group $G_{2}$ can be described as:
\begin{align}\label{definition of g2}
\rm{g}_{2}=\{(a_{ij})\in \mathfrak{so}(7) | a_{ij}{\varphi_{0}}_{ijk}=0 \;\;\forall\; k\}.
\end{align}

\begin{definition}
Let $M$ be a smooth $7$-dimensional manifold, and $F$ the frame bundle of $M$. We say that $M$ has a $G_{2}$-structure if there is a principal subbundle $P$ of $F$ with fibre $G_{2}$. 
\end{definition}

Equivalently $M$ has a $G_{2}$-structure if there exists $\varphi\in\Omega^{3}(M)$ that is pointwise modelled in $\varphi_{0}$, i.e., for each $x\in M$, $\varphi_{0}$ is the image of $\varphi(x)$  under a linear oriented isomorphism $T_{x}M\rightarrow \mathbb{R}^{7}$. Thus, we have a 1-1 correspondence between 3-forms $\varphi$ pointwise modelled in $\varphi_{0}$ and $G_{2}$-structures $P$ on $M$.

Given a $G_{2}$-structure in $M$ this determines in particular an orientation and a metric $g$ in $M$ because any $G_{2}$-structure is an instance of an $SO(7)$-structure. Furthermore, let $\varphi$ be the 3-form that corresponds to this $G_{2}$-structure. Then, the identity (\ref{innerproduct}) determines the metric in terms of $\varphi$ up to a conformal factor. Let $Hol(g)$ denote the holonomy group of $M$, then $Hol(g)\subseteq G_{2}$ if and only if $\varphi$ is parallel with respect to the Levi-Civita connection of $g$, i.e., $\nabla\varphi=0$.

\begin{lemma}\cite{salamon89}
The holonomy group of the Riemannian metric $g$ induced by $\varphi$ is contained in $G_{2}$ if and only if $d\varphi=0$ and $d*\varphi=0$, where $*$ denotes the Hodge star for $g$.
\end{lemma}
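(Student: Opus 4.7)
My plan is as follows. The forward implication is essentially immediate: since the exterior derivative is the antisymmetrisation of the covariant derivative, $\nabla\varphi=0$ gives $d\varphi=0$; and because the Hodge star is built algebraically from the metric and the orientation, both of which are preserved by $\nabla$, we also have $\nabla(*\varphi)=0$, and hence $d*\varphi=0$.

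For the converse I would use the framework of the intrinsic torsion of the $G_2$-structure. Since $\varphi$ determines the metric $g$ via $(\ref{innerproduct})$ together with an orientation, the $G_2$-frame bundle $P$ sits inside the orthonormal frame bundle. Under the splitting $\mathfrak{so}(7)=\mathfrak{g}_2\oplus\mathfrak{g}_2^{\perp}$, the $\mathfrak{g}_2^{\perp}$-component of the Levi-Civita connection restricted to $P$ defines a global tensor $\tau\in\Gamma(T^{*}M\otimes\mathfrak{g}_2^{\perp})$, the intrinsic torsion; a direct calculation shows that $\tau=0$ iff $\nabla\varphi=0$ iff the Levi-Civita connection reduces to $P$, iff $\mathrm{Hol}(g)\subseteq G_2$. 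The problem therefore reduces to showing that $d\varphi=0$ and $d*\varphi=0$ together force $\tau=0$.

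The key input is the representation-theoretic fact that $\mathfrak{g}_2^{\perp}\subset\Lambda^2\mathbb{R}^7$ is $G_2$-isomorphic to the standard seven-dimensional module: indeed $\Lambda^2\mathbb{R}^7=\Lambda^2_7\oplus\Lambda^2_{14}$ with $\Lambda^2_{14}=\mathfrak{g}_2$, which follows immediately from $(\ref{definition of g2})$. Hence $T^{*}M\otimes\mathfrak{g}_2^{\perp}$ is pointwise isomorphic to $\mathbb{R}^7\otimes\mathbb{R}^7$, which decomposes under $G_2$ into four irreducible summands $W_1\oplus W_7\oplus W_{14}\oplus W_{27}$ (trace, the two antisymmetric pieces $\Lambda^2_7$ and $\Lambda^2_{14}$, and the traceless symmetric part) of dimensions $1,7,14,27$; these are the four Fern\'andez--Gray classes of $\tau$. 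On the target side, $\Lambda^4\mathbb{R}^7\cong\Lambda^3\mathbb{R}^7\cong W_1\oplus W_7\oplus W_{27}$ and $\Lambda^5\mathbb{R}^7\cong\Lambda^2\mathbb{R}^7\cong W_7\oplus W_{14}$, so the pair $(d\varphi,d*\varphi)$ contains one copy of each of $W_1$, $W_{14}$, $W_{27}$ and two copies of $W_7$.

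I would then compute $d\varphi$ and $d*\varphi$ in a local $G_2$-adapted frame using $d\varphi_{ijkl}=4\nabla_{[i}\varphi_{jkl]}$ and $(d*\varphi)_{ijklm}=5\nabla_{[i}(*\varphi)_{jklm]}$, expand them in their irreducible components with the standard contraction identities between $\varphi$, $*\varphi$ and the metric (of the type exploited in section~\ref{sec-G2 holonomy manifolds}), and invoke Schur's lemma: injectivity of the induced $G_2$-equivariant map $\tau\mapsto(d\varphi,d*\varphi)$ then reduces to the non-vanishing of four numerical constants, one per Fern\'andez--Gray summand. The main obstacle is precisely this last step, i.e.\ the bookkeeping required to verify these four constants; this is the genuine representation-theoretic content of the Fern\'andez--Gray classification and of the exposition in \cite{salamon89}.
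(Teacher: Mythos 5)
The paper does not prove this lemma at all: it is quoted verbatim from \cite{salamon89} (and then immediately used, in the form of equations (\ref{Phicovariantlyconstant})--(\ref{Psicovariantlyconstant})), so there is no internal argument to compare yours against. What you propose is the standard Fern\'andez--Gray proof. The forward implication is exactly as you say. For the converse, the reduction to injectivity of the $G_2$-equivariant map from the intrinsic torsion $\tau\in\Gamma(T^{*}M\otimes\mathfrak{g}_2^{\perp})\cong W_1\oplus W_7\oplus W_{14}\oplus W_{27}$ to $(d\varphi,d*\varphi)\in\Lambda^4\oplus\Lambda^5$ is the right strategy, and your count of irreducible summands on both sides (using $\mathfrak{g}_2^{\perp}\cong\Lambda^2_7\cong\mathbb{R}^7$ and $\Lambda^2_{14}\cong\mathfrak{g}_2$ from (\ref{definition of g2})) is accurate.

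The caveat is that your argument stops exactly where the mathematical content begins. Schur's lemma only says the map is described by finitely many scalars; it cannot tell you they are nonzero, and no dimension count can, since a priori $d\varphi$ and $d*\varphi$ could fail to see some torsion class. Note also that the count is five constants rather than four: the $W_7$-component of $\tau$ maps into both $\Lambda^4_7$ and $\Lambda^5_7$, and injectivity requires at least one of those two scalars to be nonzero, while the constants for $W_1$, $W_{14}$, $W_{27}$ must each be nonzero individually. You acknowledge this remaining verification, and it is a finite computation, but until it is carried out what you have is a reduction, not a proof. To complete it, the cleanest route is to encode the intrinsic torsion as a full $2$-tensor $T$ via $\nabla_{i}\varphi_{jkl}=\tensor{T}{_i^m}\psi_{mjkl}$ (possible precisely because $\mathfrak{g}_2^{\perp}\cong\Lambda^2_7\cong\mathbb{R}^7$), then express $d\varphi=4\nabla_{[i}\varphi_{jkl]}$ and $d*\varphi=5\nabla_{[i}\psi_{jklm]}$ as explicit contractions of $T$ with $\varphi$ and $\psi$; the identities (\ref{dosPhiunag})--(\ref{PhiPsidosg}) on page~\pageref{ContractionsThree-FourForm} then yield the five constants directly, and one checks that the vanishing of $d\varphi$ and $d*\varphi$ kills the trace, symmetric-traceless, $\Lambda^2_7$ and $\Lambda^2_{14}$ parts of $T$ in turn, hence $\tau=0$ and $\nabla\varphi=0$.
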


We say that a manifold with a $G_{2}$-structure is a $G_{2}$-manifold if $Hol(g)\subseteq G_{2}$.

If we write $\varphi$ and $*\varphi$ in local coordinates: $\varphi=\varphi_{ijk}dx_{ijk}$, $*\varphi=\psi_{ijkl}dx_{ijkl}$ then $d\varphi=0$ and $d*\varphi=0$ are equivalent to: 
\begin{equation}\label{Phicovariantlyconstant}
0=\frac{\partial}{\partial x^{m}}\varphi_{ijk}-\Gamma_{mi}^{s}\varphi_{sjk}-\Gamma_{mj}^{s}\varphi_{isk}-\Gamma_{mk}^{s}\varphi_{ijs},
\end{equation}
\begin{equation}\label{Psicovariantlyconstant}
0=\frac{\partial}{\partial x^{m}}\psi_{ijkl}-\Gamma_{mi}^{s}\psi_{sjkl}-\Gamma_{mj}^{s}\psi_{iskl}-\Gamma_{mk}^{s}\psi_{ijsl}-\Gamma_{ml}^{s}\psi_{ijks},
\end{equation}
for all $m=1,\dots ,7$.

\begin{lemma}\cite{salamon89}\label{lemmaG2Ricciflat}
Let $(M,g)$ be a Riemannian 7-manifold. If $Hol(g)\subseteq G_{2}$, then $g$ is Ricci-flat.
\end{lemma}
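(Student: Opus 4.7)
The hypothesis $\mathrm{Hol}(g)\subseteq G_{2}$ guarantees the existence of a parallel $3$-form $\varphi$ pointwise modelled on $\varphi_{0}$, and, since $G_{2}\subset SO(7)$ preserves the Hodge star, also of a parallel $4$-form $\psi=*\varphi$. Applying the Ricci identity to the parallel form $\varphi$ yields, in an orthonormal frame,
\begin{equation*}
R_{abi}{}^{m}\varphi_{mjk} + R_{abj}{}^{m}\varphi_{imk} + R_{abk}{}^{m}\varphi_{ijm}=0.
\end{equation*}
Equivalently, the curvature tensor viewed as a $2$-form valued in $\mathfrak{so}(7)$ takes values in the subalgebra $\mathfrak{g}_{2}$, so by (\ref{definition of g2}) one also has the contracted identity $R_{abij}\varphi_{ijk}=0$.

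The plan is to contract the three-term identity above with a second copy of $\varphi$, say $\varphi_{ijn}$, summing over $i,j$. Using the standard $G_{2}$ contraction identity
\begin{equation*}
\varphi_{iab}\,\varphi_{icd}=\delta_{ac}\delta_{bd}-\delta_{ad}\delta_{bc}+\psi_{abcd}
\end{equation*}
together with the trace $\sum_{i,j}\varphi_{ijk}\varphi_{ijn}=6\,\delta_{kn}$, and the fact that partial traces $R_{abi}{}^{i}$ vanish by antisymmetry, the identity collapses (after some elementary bookkeeping) to a tensorial relation of the schematic form
\begin{equation*}
2\,R_{abkn} + R_{abim}\,\psi_{mkin} = 0.
\end{equation*}
Tracing this with $\delta^{bk}$ produces a multiple of $\mathrm{Ric}_{an}$ from the first term; for the second, the first Bianchi identity $R_{abim}+R_{aimb}+R_{ambi}=0$, combined with the total antisymmetry of $\psi$, is designed precisely to force $\sum_{b}R_{abim}\psi_{mbin}=0$. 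What remains is $\mathrm{Ric}_{an}=0$.

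\textbf{Main obstacle.} The computation is entirely algebraic but requires disciplined index manipulation: the algebraic Bianchi identity together with the antisymmetries of $R$ and $\psi$ must be combined just so to make the $\psi$-contribution to the trace collapse. Since this lemma is a classical result proved in \cite{salamon89} and enters the present paper only as a preliminary tool for the computations of Section~\ref{CDR of a G2 manifold}, simply citing it is a reasonable alternative to reproducing the full calculation.
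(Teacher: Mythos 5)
The paper does not prove this lemma at all: it is imported verbatim from \cite{salamon89} as background, so there is no in-paper argument to compare yours against. Your sketch is the standard proof and its skeleton is sound. The Ricci identity applied to the parallel $\varphi$ does show that each curvature endomorphism $R_{ab}$ annihilates $\varphi$ as a derivation, hence lies in $\mathfrak{g}_2$ (this is exactly what underlies the paper's Lemma~\ref{lemmaG2Curvatureidentitywiththreeform}). Contracting the three-term identity with $\varphi_{ijn}$ and using the contractions on page~\pageref{ContractionsThree-FourForm} does collapse to a relation of the form $2R_{abkn}=\pm R_{abim}\psi_{kmni}$ with a nonzero coefficient on $R_{abkn}$ (the $6\delta$ trace from the third term beats the two $-R_{abkn}$ contributions from the first two, so the identity is not vacuous), and tracing over $b=k$ kills the $\psi$-term because $R_{a[bim]}=0$ by the first Bianchi identity while $\psi_{bmni}$ is totally antisymmetric in $b,i,m$ --- precisely the mechanism you describe. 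Two small cautions: your quoted contraction identity carries $+\psi_{abcd}$ whereas the paper's (\ref{dosPhiunag}) has $-\psi_{ijab}$, a convention mismatch you would need to fix to get consistent signs in the ``schematic'' relation; and the intermediate observation $R_{abij}\varphi_{ijk}=0$, while true, is not actually used in your contraction, so it could be dropped. Neither point affects the validity of the argument.
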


Let $M$ be a $7$-dimensional manifold with $G_{2}$--structure given by the $3$-form $\varphi$, denote by $g$ and $\psi=*\varphi$ the associated metric and $4$-form respectively. Then the following identities are satisfied, they are collected from \cite{Karigiannis09}:\\
\noindent
Contractions of $\varphi$ and $\psi$\label{ContractionsThree-FourForm}
\begin{align}
\varphi_{ijk}\varphi_{abc}g^{ia}g^{jb}g^{kc}=&42,\label{dosPhitresg}\\
\varphi_{ijk}\varphi_{abc}g^{jb}g^{kc}=&6g_{ia},\label{dosPhidosg}\\
\varphi_{ijk}\varphi_{abc}g^{kc}=&g_{ia}g_{jb}-g_{ib}g_{ja}-\psi_{ijab}\label{dosPhiunag},
\end{align}

\begin{align}
\varphi_{ijk}\psi_{abcd}g^{ib}g^{jc}g^{kd}=&0,\label{PhiPsitresg}\\
\varphi_{ijk}\psi_{abcd}g^{jc}g^{kd}=&-4\varphi_{iab},\label{PhiPsidosg}\\
\varphi_{ijk}\psi_{abcd}g^{kd}=& g_{ia}\varphi_{jbc}+g_{ib}\varphi_{ajc}+g_{ic}\varphi_{abj}\nonumber\\
&-g_{aj}\varphi_{ibc}-g_{bj}\varphi_{aic}-g_{cj}\varphi_{abi},
\end{align}

\begin{align}
\psi_{ijkl}\psi_{abcd}g^{ld}=&-\varphi_{ajk}\varphi_{ibc}-\varphi_{iak}\varphi_{jbc}-\varphi_{ija}\varphi_{kbc}\nonumber\\
&+g_{ia}g_{jb}g_{kc}+g_{ib}g_{jc}g_{ka}+g_{ic}g_{ja}g_{kb}-g_{ia}g_{jc}g_{kb}\nonumber\\
&-g_{ib}g_{ja}g_{kc}-g_{ic}g_{jb}g_{ka}-g_{ia}\psi_{jkbc}-g_{ja}\psi_{kibc}\nonumber\\
&-g_{ka}\psi_{ijbc}+g_{ab}\psi_{ijbc}-g_{ac}\psi_{ijkb}.
\end{align}

Another useful identity is:
\begin{align}\label{identityvarphi-wedge-varphi}
\varphi \wedge \left(\omega \hk \varphi \right)=&(-2)\left(\psi\wedge \ws \right), \qquad \text{or in coordinates}\nonumber\\
\varphi_{[ijk}\varphi_{mn]l}=&(-2)g_{l[m}\varphi_{nijk]}  
\end{align}
where $w$ is a vector field in $M$ and \ws\ be the $1$--form dual to $w$;  $[\;\;]$ denotes the anti--symmetrization of the indices.

\begin{lemma} (Lemma 4.9 in \cite{Karigiannis09})\label{lemmaG2Curvatureidentity}
Let $M$ be a manifold with a $G_{2}$-structure and let $R_{ijkl}$ be the Riemann curvature tensor of $M$. Then we have $R_{ijkl}\psi^{ijkm}=0$. 
\end{lemma}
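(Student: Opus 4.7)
The plan is to prove the identity by a short, purely algebraic calculation: I will not need the parallelism $\nabla\psi=0$ or indeed any $G_{2}$-specific property beyond the fact that $\psi$ is a $4$-form. The essential ingredient is the first Bianchi identity $R_{i[jkl]}=0$ together with the standard symmetries $R_{ijkl}=-R_{jikl}=-R_{ijlk}=R_{klij}$ of the Riemann tensor.

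First I would observe that, since $\psi^{ijkm}$ is totally antisymmetric in its four indices, in particular in the triple $(i,j,k)$, contracting with it extracts the totally antisymmetric part of the first three slots of $R_{ijkl}$:
\begin{equation*}
R_{ijkl}\psi^{ijkm}=R_{[ijk]l}\psi^{ijkm}.
\end{equation*}
It then suffices to prove $R_{[ijk]l}=0$. Using that $R$ is already antisymmetric in $(i,j)$, the six-term full antisymmetrization collapses to three, giving $R_{[ijk]l}=\tfrac{1}{3}\bigl(R_{ijkl}+R_{jkil}+R_{kijl}\bigr)$.

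The key step is to recognize this sum as the standard Bianchi cyclic sum on the last three indices in disguise: the pair symmetry $R_{abcd}=R_{cdab}$ gives $R_{jkil}=R_{iljk}$, while applying the antisymmetry of each pair once gives $R_{kijl}=R_{iklj}$. Substituting,
\begin{equation*}
R_{[ijk]l}=\tfrac{1}{3}\bigl(R_{ijkl}+R_{iklj}+R_{iljk}\bigr)=R_{i[jkl]}=0
\end{equation*}
by the first Bianchi identity, from which $R_{ijkl}\psi^{ijkm}=0$ follows at once. The only point that requires care is the index bookkeeping in this last step; there is no genuine obstacle. As a byproduct, the argument shows that the identity holds with $\psi$ replaced by any $4$-form on any Riemannian manifold, so the role of the $G_{2}$-structure in the lemma is only to supply the specific $4$-form $\psi=*\varphi$.
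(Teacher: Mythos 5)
Your proof is correct and complete. The paper itself does not prove this lemma --- it is imported verbatim from \cite{Karigiannis09} --- but your argument is precisely the standard one used there: contraction against the totally antisymmetric $\psi^{ijkm}$ picks out $R_{[ijk]l}$, the pair symmetries collapse the antisymmetrization to $\tfrac{1}{3}\left(R_{ijkl}+R_{jkil}+R_{kijl}\right)=\tfrac{1}{3}\left(R_{ijkl}+R_{iljk}+R_{iklj}\right)$, and this vanishes by the first Bianchi identity; the index bookkeeping in your last display checks out. Your closing remark is also accurate and worth keeping: the identity holds for an arbitrary $4$-form on any Riemannian manifold, which is consistent with the lemma being stated for a mere $G_{2}$-structure rather than $G_{2}$-holonomy --- in contrast with Lemma \ref{lemmaG2Curvatureidentitywiththreeform}, whose proof genuinely uses that the curvature takes values in $\rm{g}_{2}$.
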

The next lemma follows easily from the fact that if $Hol(g)\subseteq G_{2}$ then $R_{ijkl} \in \rm{Sym}^{2}(\rm{g}_2)$, and the above definition (\ref{definition of g2}) of $\rm{g}_{2}$
\begin{lemma}\label{lemmaG2Curvatureidentitywiththreeform}
Let $M$ be a $G_{2}$-manifold and let $R_{ijkl}$ be the Riemann curvature tensor of $M$. Then we have $R_{ijkl}\varphi^{ijm}=0$. 
\end{lemma}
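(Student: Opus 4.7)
The plan is to interpret the Riemann tensor, for each fixed pair of indices, as an element of $\mathfrak{so}(7)$ lying in the holonomy subalgebra, and then apply the defining equation of $\mathfrak{g}_{2}$.

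First I would invoke the Ambrose–Singer holonomy theorem: since $\mathrm{Hol}(g) \subseteq G_{2}$, the holonomy algebra $\mathfrak{hol}(g)$ is contained in $\mathfrak{g}_{2}$, and for every pair of tangent vectors $X,Y$ at a point $p \in M$ the curvature endomorphism $R(X,Y)$ lies in $\mathfrak{hol}(g) \subseteq \mathfrak{g}_{2}$. Under the metric identification $\mathfrak{so}(7) \cong \Lambda^{2}T^{*}_{p}M$, this says that for each fixed choice of indices $(k,l)$, the antisymmetric $2$-tensor $A_{ij} := R_{ijkl}$ belongs (at each point) to $\mathfrak{g}_{2}$. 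Combined with the pair-symmetry $R_{ijkl}=R_{klij}$ this is the usual statement $R \in \mathrm{Sym}^{2}(\mathfrak{g}_{2})$ alluded to before the lemma.

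Then I would apply the pointwise definition (\ref{definition of g2}) of $\mathfrak{g}_{2}$: any $A \in \mathfrak{g}_{2}$ satisfies $A_{ij}\varphi_{ijm}=0$ for every $m$. The only mild subtlety is that (\ref{definition of g2}) is stated in terms of the flat model $\varphi_{0}$, but since the frame bundle reduction associated with the $G_{2}$-structure identifies the tangent space at each point with $(\mathbb{R}^{7},\varphi_{0})$ equivariantly, the condition transfers verbatim to the actual $3$-form $\varphi$ on $M$. Specializing to $A_{ij}=R_{ijkl}$ gives $R_{ijkl}\varphi^{ijm}=0$ for arbitrary $k,l,m$, which is exactly the claim.

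I do not anticipate any serious obstacle: the lemma is a direct algebraic consequence of Ambrose–Singer combined with the definition of $\mathfrak{g}_{2}$, and no $G_{2}$-specific identities from the list on page~\pageref{ContractionsThree-FourForm} are needed. The only thing to verify carefully is that the holonomy-theoretic statement genuinely places $R_{\bullet\bullet kl}$ in $\mathfrak{g}_{2}$ rather than in some larger subalgebra; this is automatic since Ambrose–Singer produces curvature values inside $\mathfrak{hol}(g)$ itself.
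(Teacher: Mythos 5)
Your proposal is correct and is essentially the paper's own argument: the paper derives the lemma from the fact that $Hol(g)\subseteq G_{2}$ forces $R_{ijkl}\in\mathrm{Sym}^{2}(\mathrm{g}_{2})$ together with the defining condition (\ref{definition of g2}) of $\mathrm{g}_{2}$, which is exactly your Ambrose--Singer plus pair-symmetry reasoning. No further comparison is needed.
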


\section{The Chiral de Rham Complex over a \texorpdfstring{$G_{2}$}{}-manifold}\label{CDR of a G2 manifold}

In this section we prove the main results of this paper. We begin remembering the conjecture that is the leitmotiv of this paper.

Let $(M,g)$ be a $G_{2}$-manifold, let $\varphi$ be the corresponding defining 3-form. Let $\Phi_{\pm}$ be the global sections of CDR that correspond to $\varphi$ by Theorem \ref{theorem Reimundo}, and let $K_{\pm}:=G_{(0)}\left(\Phi_{\pm}\right)$. In \cite{Ekstrand-Heluani-Kallen-Zabzine13} the following conjecture was stated:

\begin{conjecture}\cite[Conjecture 7.3]{Ekstrand-Heluani-Kallen-Zabzine13}\label{conjecture}
The sections pairs $\{\Phi_{+},K_{+}\}$ and $\{\Phi_{-},K_{-}\}$ generate two commuting copies of the Shatashvili-Vafa $G_{2}$ superconformal algebra given in subsection \ref{Shatashvili-Vafa algebra} (see Remark \ref{remark-the SV algebra is generated in conf weight 3/2}).
\end{conjecture}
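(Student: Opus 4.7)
My plan is to follow the four-step strategy outlined in the introduction: construct the six generators of each Shatashvili-Vafa copy explicitly, and verify their OPEs in a coordinate chart by reducing everything to the $bc$-$\beta\gamma$ system together with the $G_2$-identities of Section \ref{sec-G2 holonomy manifolds}. The six generators of the $\pm$ copy are: the standard Virasoro $L$ of the CDR (central charge $21/2$ because $\dim M = 7$) and its $N=1$ super-partner $G$, both shared between the two copies; the given $\Phi_{\pm}$ and $K_{\pm} = G_{(0)}\Phi_{\pm}$; and $X_{\pm}$, $M_{\pm} = G_{(0)} X_{\pm}$ read off from the expected bracket $[\Phi_{\pm\,\lambda}\Phi_{\pm}] = -\tfrac{7}{2}\lambda^{2} + 6 X_{\pm}$. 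This mirrors Remark \ref{remark-the SV algebra is generated in conf weight 3/2}, which says the SV algebra is generated in conformal weight $3/2$ by $\Phi$ and $G$.

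\textbf{Step 1.} Compute $[\Phi_{\pm\,\lambda}\Phi_{\pm}]$ and $[G_{\lambda}\Phi_{\pm}]$ in local coordinates, using the explicit formula (\ref{correction3form}) and the non-commutative Wick formula (\ref{Wickformula}). The key inputs are the contraction (\ref{dosPhidosg}), which produces the coefficient $\tfrac{7}{2}$ of $\lambda^{2}$, together with the covariant-constancy equation (\ref{Phicovariantlyconstant}) and Ricci-flatness (Lemma \ref{lemmaG2Ricciflat}) to absorb the quantum $\Gamma$-corrections. By Remark \ref{Tri-critical Ising model} this identifies the pair $\{X_{\pm}, \Phi_{\pm}\}$, after the rescaling of the remark, as the tri-critical Ising $N=1$ algebra of central charge $7/10$, commuting with the ambient $N=1$ at $c=21/2$. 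Commutativity between the $+$ and $-$ copies then reduces, by Remark \ref{remark-the SV algebra is generated in conf weight 3/2}, to checking that $[\Phi_{+\,\lambda}\Phi_{-}] = 0$ and $[G_{\lambda}\Phi_{+}]$ is independent of $\Phi_{-}$: the decoupling (\ref{commutation rules of the e's}) handles the $e_{\pm}^{i} e_{\mp}^{j}$ cross-terms, and the mixed $\partial\gamma$-corrections cancel again by (\ref{Phicovariantlyconstant}).

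\textbf{Steps 2 and 3.} The remaining linear brackets $[G_{\lambda} K_{\pm}]$, $[L_{\lambda} X_{\pm}]$, $[L_{\lambda} M_{\pm}]$, $[\Phi_{\pm\,\lambda} X_{\pm}]$, etc., follow from Step 1 by the Jacobi identity (\ref{Jacobi identity}), sesquilinearity (\ref{sesquilinearity}), and the derivation property of $G_{(0)}$; for instance $[G_{\lambda} K_{\pm}] = [G_{\lambda} G_{(0)} \Phi_{\pm}]$ collapses via Jacobi to $[L_{\lambda}\Phi_{\pm}]$ plus derivatives. The genuinely non-linear brackets $[\Phi_{\pm\,\lambda} M_{\pm}]$, $[X_{\pm\,\lambda} K_{\pm}]$, $[X_{\pm\,\lambda} M_{\pm}]$, $[K_{\pm\,\lambda} K_{\pm}]$, $[K_{\pm\,\lambda} M_{\pm}]$, $[M_{\pm\,\lambda} M_{\pm}]$ are then computed \emph{assuming} the defining relation (\ref{ideal in the definition of the algebra}). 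I would reduce each of them via (\ref{Wickformula}) and Borcherds' identity (\ref{Borcherds identity}) to tensor contractions of $\varphi$, $\psi$ and the Riemann tensor, which are handled by the contractions on page \pageref{ContractionsThree-FourForm}, the wedge identity (\ref{identityvarphi-wedge-varphi}), and Lemma \ref{lemmaG2Curvatureidentitywiththreeform}.

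\textbf{Step 4.} Finally, verify (\ref{ideal in the definition of the algebra}) as an identity of global sections. Expanding $4{:}GX_{\pm}{:} - 2{:}\Phi_{\pm} K_{\pm}{:} - 4\partial M_{\pm} - \partial^{2} G$ in a chart produces many terms whose cancellation depends on Lemma \ref{lemmaG2Curvatureidentitywiththreeform} (equivalently, on $R_{ijkl}\in \operatorname{Sym}^{2}(\mathrm{g}_{2})$) together with the closedness of $\psi$ (\ref{Psicovariantlyconstant}). I expect this to be the dominant obstacle: the quantum corrections in (\ref{correction3form}) propagate through the normally ordered products in a non-associative manner (cf.\ (\ref{quasi-associativity})), so a disciplined book-keeping using Borcherds' identity and the $G_2$-specific curvature identity is needed to force all residual $\Gamma$- and $R$-dependent contributions to cancel; this is also the step where the role of the first Pontryagin class $p_{1}(M)$ naturally surfaces as the topological shadow of the identity.
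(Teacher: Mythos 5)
Your overall four-step plan (establish the $N=1$ structures, then the linear brackets, then the non-linear brackets modulo the relation (\ref{ideal in the definition of the algebra}), then the relation itself) matches the paper's strategy, but your setup of the generators contains a structural error that would sink the proof. You take the CDR Virasoro $L$ and supercurrent $G$ to be \emph{shared} between the two copies. This cannot work: two commuting copies of the Shatashvili-Vafa algebra cannot have the same stress tensor, since $[L_{\lambda}L]=(\partial+2\lambda)L+\tfrac{c}{12}\lambda^{3}\neq 0$; moreover the paper shows the total $\{G,L\}$ of the seven $bc$-$\beta\gamma$ systems has central charge $21$, not the required $\tfrac{21}{2}$ per copy. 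The essential point is that $G$ and $L$ \emph{split}: one must define $G_{\pm}:=-\tfrac{1}{3}\left({\Phi_{\pm}}_{(1)}K_{\pm}\right)$ and $L_{\pm}:=\tfrac{1}{2}\left({G_{\pm}}_{(0)}G_{\pm}\right)$, verify $G=G_{+}+G_{-}$ and $L=L_{+}+L_{-}$, and prove by explicit local computation that $[{G_{+}}_{\lambda}G_{-}]=0$ and $[{G_{\pm}}_{\lambda}\Phi_{\mp}]=0$. These are among the longest computations in the paper (subsections \ref{G+withG-} and \ref{G+withPhi-}), and they are exactly where Ricci-flatness, the first Bianchi identity and the curvature identities $R_{ijkl}\varphi^{ijm}=0$, $R_{ijkl}\psi^{ijkm}=0$ enter; they do not follow from $\nabla\varphi=0$ and the decoupling of the $e_{\pm}^{i}$ alone, as your Step 1 suggests. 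Only after this splitting does each copy acquire its own $N=1$ pair at $c=\tfrac{21}{2}$, and only then does commutativity of the two copies even make sense.

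A second, smaller issue: your claim that the remaining linear brackets follow from Step 1 by the Jacobi identity is too optimistic. Several of them require further explicit coordinate computations before the Jacobi/Borcherds bootstrap can start: the identification ${\Phi_{+}}_{(1)}K_{+}=-3G_{+}$ (which is how $G_{+}$ is constructed in the first place), the vanishing of the $\lambda$- and $\lambda^{2}$-coefficients in $[{G_{+}}_{\lambda}\Phi_{+}]=K_{+}$, and the primarity statement $[{L_{+}}_{\lambda}\Phi_{+}]=(\partial+\tfrac{3}{2}\lambda)\Phi_{+}$; in each of these the Ricci tensor and contractions such as $R_{ijkl}\varphi^{ijm}$ appear and must be shown to vanish using the $G_{2}$ hypothesis. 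With the corrected definitions of $G_{\pm},L_{\pm}$ and these additional explicit seeds in place, the rest of your outline (Steps 2--4, including the use of (\ref{ideal in the definition of the algebra}) for the non-linear brackets and its independent verification, where the Pontryagin-type term $R_{ijmn}\tensor{R}{_k_l^m^n}\partial\gamma^{i}\partial\gamma^{j}\partial\gamma^{k}c^{l}$ indeed appears and vanishes by antisymmetry) does track the paper's argument.
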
 

\subsection{\texorpdfstring{$G_{2}$}{} holonomy manifolds are superconformal}\label{G2 holonomy manifolds are superconformal}
Before stating our main result we define some sections:
\begin{gather*}
X_{\pm}:=\dfrac{1}{6}\left({\Phi_{\pm}}_{(0)}\Phi_{\pm}\right),\quad G_{\pm}:=(-\dfrac{1}{3})\left({\Phi_{\pm}}_{(1)}K_{\pm}\right),\quad L_{\pm}:=\dfrac{1}{2}\left({G_{\pm}}_{(0)}G_{\pm}\right).
\end{gather*}
Note that these are well defined global sections because they are defined in terms of $\Phi_{\pm}$ and $K_{\pm}$.

\begin{theorem}\label{maintheorem2}
The pairs of sections  $\{G_{\pm},L_{\pm}\}$ and $\{\Phi_{\pm},X_{\pm}\}$ generate two $N=1$ superconformal algebras of central charge $\frac{21}{2}$  and $\frac{7}{10}$ respectively. Furthermore the plus signed sections commute with the minus signed sections. 
\end{theorem}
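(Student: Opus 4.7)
The plan is to work entirely in a local coordinate chart, expanding $\Phi_{\pm}$ according to the explicit formula (\ref{correction3form}) and reducing every computation to the basic brackets of the $bc$-$\beta\gamma$ system together with the derived brackets $[{e^{i}_{+}}_{\lambda}e^{j}_{+}]=g^{ij}$, $[{e^{i}_{-}}_{\lambda}e^{j}_{-}]=-g^{ij}$, $[{e^{i}_{+}}_{\lambda}e^{j}_{-}]=0$ of (\ref{commutation rules of the e's}). All $\lambda$-brackets are expanded via the non-commutative Wick formula (\ref{Wickformula}) and simplified using four geometric inputs: the $\varphi$--$\varphi$ and $\varphi$--$\psi$ contraction identities (\ref{dosPhitresg})--(\ref{dosPhiunag}), the parallelism $\nabla\varphi=0$ written as (\ref{Phicovariantlyconstant}), Ricci-flatness (Lemma \ref{lemmaG2Ricciflat}), and the curvature identities $R_{ijkl}\psi^{ijkm}=R_{ijkl}\varphi^{ijm}=0$ of Lemmas \ref{lemmaG2Curvatureidentity}--\ref{lemmaG2Curvatureidentitywiththreeform}. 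Given the size of the intermediate expressions, symbolic computation with OPEdefs and Cadabra is a practical necessity, but each cancellation should be traceable to one of these geometric facts.

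The first main task is to compute $[{\Phi_{+}}_{\lambda}\Phi_{+}]$. Writing $\Phi_{+}=\Phi_{+}^{(0)}+\Phi_{+}^{(1)}$ with $\Phi_{+}^{(0)}=\tfrac{1}{6}\varphi_{ijk}e^{i}_{+}e^{j}_{+}e^{k}_{+}$ and $\Phi_{+}^{(1)}$ the Christoffel correction, the Wick expansion of $[{\Phi_{+}^{(0)}}_{\lambda}\Phi_{+}^{(0)}]$ produces terms with three, two, or one pair of $e_{+}$'s contracted. The fully contracted piece gives a $\lambda^{2}$ central term whose coefficient reduces via (\ref{dosPhitresg}) to $-\tfrac{7}{2}$; the partially contracted pieces give, on one hand, a $\lambda^{1}$ piece that must vanish on assembly (since $\Phi_{+}$ has half-integer conformal weight forces $\Phi_{+(1)}\Phi_{+}=0$ by skew-symmetry), which is checked by cancelling the contribution from (\ref{dosPhidosg}) against the corresponding terms in $[{\Phi_{+}^{(0)}}_{\lambda}\Phi_{+}^{(1)}]$ using $\nabla\varphi=0$, and on the other hand a $\lambda^{0}$ piece that by (\ref{dosPhiunag}) splits into a $\psi$-part $-\tfrac{1}{4}\psi_{ijab}e^{i}_{+}e^{j}_{+}e^{a}_{+}e^{b}_{+}$ and a $g\cdot g$ part. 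The remaining contributions from $[{\Phi_{+}^{(0)}}_{\lambda}\Phi_{+}^{(1)}]$ and $[{\Phi_{+}^{(1)}}_{\lambda}\Phi_{+}^{(1)}]$ then supply exactly the $\Gamma$-dependent quantum corrections turning the $\psi$-part into the global section associated by Theorem \ref{theorem Reimundo} to the 4-form $\psi$. This yields $[{\Phi_{+}}_{\lambda}\Phi_{+}]=-\tfrac{7}{2}\lambda^{2}+6X_{+}$ with $X_{+}=\tfrac{1}{6}\Phi_{+(0)}\Phi_{+}$, and the remaining tri-critical Ising relations $[{\Phi_{+}}_{\lambda}X_{+}]=-\tfrac{15}{2}\Phi_{+}\lambda-\tfrac{5}{2}\partial\Phi_{+}$ and $[{X_{+}}_{\lambda}X_{+}]=\tfrac{35}{24}\lambda^{3}-10X_{+}\lambda-5\partial X_{+}$ are then extracted from this base bracket by repeated application of the Jacobi identity (\ref{Jacobi identity}), avoiding further direct computation.

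Next I would compute $K_{+}=G_{(0)}\Phi_{+}$ using that $G_{(0)}$ is an odd derivation of every $n$-product with the elementary actions $G_{(0)}\gamma^{i}=c^{i}$, $G_{(0)}c^{i}=\partial\gamma^{i}$, $G_{(0)}b_{i}=\beta_{i}$, then form $G_{+}=-\tfrac{1}{3}\Phi_{+(1)}K_{+}$ and $L_{+}=\tfrac{1}{2}G_{+(0)}G_{+}$. The verification of $[{G_{+}}_{\lambda}G_{+}]=2L_{+}+\tfrac{7}{2}\lambda^{2}$ and $[{L_{+}}_{\lambda}G_{+}]=(\partial+\tfrac{3}{2}\lambda)G_{+}$ is the bulkiest step: Wick expansion produces many terms in $\beta$, $\partial\gamma$, $e_{+}$, Christoffel symbols and $\varphi$, and their cancellation relies on Ricci-flatness (killing the Ricci-tensor contractions that would otherwise survive in $[{G_{+}}_{\lambda}G_{+}]$) together with the curvature lemmas (killing the Riemann-tensor contractions with $\varphi$ or $\psi$). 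The central charge $c=21/2=7\times 3/2$ is consistent with the CDR of a seven-manifold being locally seven copies of the $bc$-$\beta\gamma$ system, each of central charge $3/2$.

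Finally, for the commutation of the plus and minus sectors, the starting observation is that $\Phi_{\pm}$ contain no $\beta$-field, so the bracket $[{\Phi_{+}}_{\lambda}\Phi_{-}]$ involves only the vanishing primitive brackets $[{e^{i}_{+}}_{\lambda}e^{j}_{-}]=0$, $[{\partial\gamma^{i}}_{\lambda}e^{j}_{\pm}]=0$, $[{\partial\gamma^{i}}_{\lambda}\partial\gamma^{j}]=0$ (the $\gamma$-valued coefficients $\varphi$, $\Gamma$, $g$ commute with everything here, having no $\beta$ themselves), so $[{\Phi_{+}}_{\lambda}\Phi_{-}]=0$ at once. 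From this, applying the odd derivation $G_{(0)}$ twice and using $G_{(0)}^{2}=\partial$ together with sesquilinearity reduces $[{K_{+}}_{\lambda}K_{-}]=0$ to the vanishing of $[{\Phi_{+}}_{\lambda}K_{-}]$, which I would verify by direct computation using $\nabla\varphi=0$ and the contraction identities; the commutations among $G_{\pm}$, $L_{\pm}$, $X_{\pm}$ across sectors then propagate by the non-commutative Wick formula. The main obstacle throughout is the sheer size of the expressions when computing $[{\Phi_{+}}_{\lambda}\Phi_{+}]$ and especially $[{G_{+}}_{\lambda}G_{+}]$: each cancellation is individually manageable, but orchestrating the hundreds of terms requires disciplined use of the $G_{2}$ identities and symbolic computation, which is why the problem is described as a \textit{tour de force}.
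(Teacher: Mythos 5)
Your overall architecture is sound and would prove the theorem, but you take a genuinely different (and in places heavier) route than the paper. For the tri-critical Ising pair $\{\Phi_{\pm},X_{\pm}\}$ you follow essentially the paper's strategy: one explicit computation of $[{\Phi_{+}}_{\lambda}\Phi_{+}]$ driven by the contractions (\ref{dosPhitresg})--(\ref{dosPhiunag}) and $\nabla\varphi=0$, then Jacobi identities for the rest. For the $N=1$ pair $\{G_{+},L_{+}\}$, however, you propose a direct Wick expansion of $[{G_{+}}_{\lambda}G_{+}]$ and $[{L_{+}}_{\lambda}G_{+}]$, whereas the paper avoids this entirely: it observes that $G_{+}+G_{-}=c^{i}\beta_{i}+\partial\gamma^{i}b_{i}$ is the globally known superconformal vector of central charge $21$, proves only $[{G_{+}}_{\lambda}G_{-}]=0$ explicitly, and then extracts the full $N=1$ structure of each summand by abstract positivity-of-conformal-weight and Jacobi arguments, computing explicitly only the single $\lambda^{2}$ coefficient where Ricci-flatness enters. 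That splitting trick is the main labor-saving device of the proof and you would be well advised to adopt it. Conversely, your treatment of the cross-commutation is \emph{more} economical than the paper's: reducing everything to the single explicit check $[{\Phi_{+}}_{\lambda}K_{-}]=0$ via the derivation property of $G_{(0)}$ and the Jacobi identity is legitimate (indeed $K_{+(n)}\Phi_{-}=\Phi_{+(n)}K_{-}$ and then $K_{+(n)}K_{-}=D(K_{+(n)}\Phi_{-})-(\partial\Phi_{+})_{(n)}\Phi_{-}$ vanishes), while the paper instead checks $[{G_{+}}_{\lambda}G_{-}]=0$ and $[{G_{+}}_{\lambda}\Phi_{-}]=0$ by two separate long computations.

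One concrete claim in your plan is wrong and would trip you up in practice: $X_{+}$ is \emph{not} the global section associated by Theorem~\ref{theorem Reimundo} to the $4$-form $\psi$. The explicit result (\ref{explicitformforX+}) contains, beyond the corrections of (\ref{correction4form}), the additional terms $-\tfrac{1}{2}g_{ij}\partial(e^{i}_{+})e^{j}_{+}-\tfrac{1}{2}g_{ij}\Gamma^{j}_{kl}\partial\gamma^{k}e^{l}_{+}e^{i}_{+}-\tfrac{1}{4}\Gamma^{i}_{jk}\Gamma^{k}_{il}\partial\gamma^{j}\partial\gamma^{l}$; this is exactly the point of Remark~\ref{rm-commentaboutSpin(7)case} and the reason Conjecture~\ref{conjecture} is phrased using only the $3$-form. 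If you expect the $\Gamma$-dependent terms to close up into $J_{+q}(\psi)$ you will be left with an unexplained residue. The slip is not fatal to your logic, since you define $X_{+}:=\tfrac{1}{6}\Phi_{+(0)}\Phi_{+}$ and derive $[{\Phi_{+}}_{\lambda}X_{+}]$ and $[{X_{+}}_{\lambda}X_{+}]$ by Jacobi from the base bracket rather than from the explicit form of $X_{+}$; but you should drop the identification with $J_{+q}(\psi)$. A smaller imprecision: the vanishing of ${\Phi_{+}}_{(1)}\Phi_{+}$ follows from skew-symmetry only once you know the $\lambda^{2}$ coefficient is a genuine constant (which (\ref{dosPhitresg}) supplies), not from the conformal weight of $\Phi_{+}$ alone.
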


\begin{proof}
The proof is based on explicit computations and some abstract manipulations. To make the proof clear, every time that an explicit computation should be made we will indicate the appropriate subsection of the section \ref{app-computations} where the computation is performed. Note that to compute a $\lambda$-bracket we can choose a local coordinate chart and perform the computation in this chart because we are working with well defined global sections.\\
We begin proving that $\{\Phi_{\pm},X_{\pm}\}$ generate two $N=1$ superconformal algebras of the desired central charges respectively. We work the plus case (the minus case is similar).  

We have by (\ref{correction3form})

\begin{equation}
\Phi_{+}=\tfrac{1}{6}\varphi_{ijk}e^{i}_{+}e^{j}_{+}e^{k}_{+} + \tfrac{1}{2}\varphi_{ijk}\Gamma^{i}_{mn}g^{jm}\partial \gamma^{n}e^{k}_{+}.
\end{equation}

In \ref{PhiwithPhi} it is shown that 

\begin{equation}\label{PhilambdaPhi}
[{\Phi_{+}}_{\lambda}\Phi_{+}]=(-\tfrac{7}{2}) \lambda^{2}+6X_{+},
\end{equation}

\noindent where $X_{+}$ is given by:

\begin{eqnarray}\label{explicitformforX+}
X_{+}&=&-\tfrac{1}{24}\psi_{ijkl}e^{i}_{+}e^{j}_{+}e^{k}_{+}e^{l}_{+}-\tfrac{1}{4}\psi_{ijkl}\Gamma^{i}_{mn}g^{jm}\partial\gamma^{n}e^{k}_{+}e^{l}_{+}\nonumber\\
&&-\tfrac{1}{8}\psi_{ijkl}\Gamma^{i}_{m_{1}n_{1}}g^{jm_{1}}\partial\gamma^{n_{1}}\Gamma^{k}_{m_{2}n_{2}}g^{lm_{2}}\partial\gamma^{n_{2}}-\tfrac{1}{2}g_{ij}\partial(e^{i}_{+})e^{j}_{+}\nonumber\\
&&-\tfrac{1}{2}g_{ij}\Gamma^{j}_{kl}\partial\gamma^{k}e^{l}_{+}e^{i}_{+}-\tfrac{1}{4}\Gamma^{i}_{jk}\Gamma^{k}_{il}\partial\gamma^{j}\partial\gamma^{l}.
\end{eqnarray}

In \ref{XwithPhi} we see that:

\begin{equation}
{X_{+}}_{(2)}\Phi_{+}=0, \;\;\; {X_{+}}_{(1)}\Phi_{+}=-\tfrac{15}{2}\Phi_{+}.
\end{equation}

\noindent
Then, using skewsymmetry (\ref{skewsymmetry}), we get ${\Phi_{+}}_{(2)}X_{+}=0$, ${\Phi_{+}}_{(1)}X_{+}=-\tfrac{15}{2}\Phi_{+}$. To compute ${\Phi_{+}}_{(0)}X_{+}$ we observe that:

\begin{equation}\label{PhizeroPhi}
[{\Phi_{+}}_{\lambda}[{\Phi_{+}}_{\mu}\Phi_{+}]]=6[{\Phi_{+}}_{\lambda}X_{+}]
\end{equation}
\noindent
by (\ref{PhilambdaPhi}). Using the Jacobi identity (\ref{Jacobi identity}) on the left side of (\ref{PhizeroPhi}) we have:

\begin{eqnarray}\label{PhizeroPhiauxiliar}
[{\Phi_{+}}_{\lambda}[{\Phi_{+}}_{\mu}\Phi_{+}]]&=&(-1)[{\Phi_{+}}_{\mu}[{\Phi_{+}}_{\lambda}\Phi_{+}]]+[[{\Phi_{+}}_{\lambda}\Phi_{+}]_{\lambda+\mu}\Phi_{+}]\nonumber\\
&=&(-1)[{\Phi_{+}}_{\mu}6X_{+}]+[{6X_{+}\;}_{\lambda+\mu}\;\Phi_{+}]\nonumber\\
&=&(-6)\left({\Phi_{+}}_{(0)}X_{+}+(-\tfrac{15}{2})\Phi_{+}\mu\right)+(-1)6[{\Phi_{+}\;}_{-\lambda-\mu-\partial}\;X_{+}]\nonumber\\
&=&(-6)\left({\Phi_{+}}_{(0)}X_{+}+(-\tfrac{15}{2})\Phi_{+}\mu\right)\\ \nonumber
&&+(-6)\left({\Phi_{+}}_{(0)}X_{+}+\tfrac{15}{2}(\lambda+\mu+\partial)\Phi_{+}\right).
\end{eqnarray} 

Equating (\ref{PhizeroPhi})  and (\ref{PhizeroPhiauxiliar}) we obtain a polynomial equality in $\lambda$ and $\mu$. Taking the coefficient of $\lambda^{0}\mu$ we get ${\Phi_{+}}_{(0)}X_{+}=-\frac{5}{2}\partial\Phi_{+}$. Then

\begin{equation}\label{PhilambdaX}
[{\Phi_{+}}_{\lambda}X_{+}]=-\tfrac{15}{2}\Phi_{+}\lambda-\tfrac{5}{2}\partial\Phi_{+}.
\end{equation} 

To compute $[{X_{+}}_{\lambda}X_{+}]$ observe that, using (\ref{PhilambdaPhi}) and the Jacobi identity, we have:

\begin{equation}
6[{X_{+}}_{\lambda}X_{+}]=[{X_{+}}_{\lambda}[{\Phi_{+}}_{\mu}\Phi_{+}]]=[{\Phi_{+}}_{\mu}[{X_{+}}_{\lambda}\Phi_{+}]]+[[{X_{+}}_{\lambda}\Phi_{+}]_{\lambda+\mu}\Phi_{+}].\nonumber
\end{equation}

Then 

\begin{equation}\label{XwithXauxiliar}
6[{X_{+}}_{\lambda}X_{+}]=[{\Phi_{+}}_{\mu}-\tfrac{10}{2}\partial\Phi_{+}-\tfrac{15}{2}\Phi_{+}\lambda]+[[{X_{+}}_{\lambda}\Phi_{+}]_{\lambda+\mu}\Phi_{+}].
\end{equation}

This is a polynomial identity in $\lambda$ and $\mu$. Putting $\mu=0$ we get:

\begin{eqnarray}
6[{X_{+}}_{\lambda}X_{+}]&=&-30\partial X_{+}-45X_{+}\lambda+[[{X_{+}}_{\lambda}\Phi_{+}]_{\lambda}\Phi_{+}]\nonumber\\
&=&-30\partial X_{+}-45X_{+}\lambda+[{\tfrac{10}{2}\partial\Phi_{+}-\tfrac{15}{2}\Phi_{+}\lambda\;}_{\lambda}\Phi_{+}]\nonumber\\
&=&-30\partial X_{+}-45X_{+}\lambda+[{-\tfrac{10}{2}\partial\Phi_{+}}_{\lambda}\Phi_{+}]+[{-\tfrac{15}{2}\Phi_{+}}_{\lambda}\Phi_{+}]\lambda,\nonumber
\end{eqnarray}

Using sesquilinearity (\ref{sesquilinearity}) it follows that:

\begin{equation}\label{XlambdaX}
[{X_{+}}_{\lambda}X_{+}]=\tfrac{35}{24}\lambda^{3}-10X_{+}\lambda-5\partial X_{+}.
\end{equation}

\noindent
We have proved that $\{\Phi_{\pm},X_{\pm}\}$ satisfy the $\lambda$-brackets (\ref{PhilambdaPhi}), (\ref{PhilambdaX}) and (\ref{XlambdaX}) and therefore, by Remark \ref{Tri-critical Ising model} we conclude that $\{\Phi_{+},X_{+}\}$ and $\{\Phi_{-},X_{-}\}$ generate two $N=1$ superconformal algebras of central charge $\tfrac{7}{10}$ respectively. Furthermore, as the $e_{+}^{i}$'s commute with the $e_{-}^{i}$'s
(\ref{commutation rules of the e's}), we have that $\Phi_{+}$ commutes with $\Phi_{-}$ and, consequently, the algebra generated by $\{\Phi_{+},X_{+}\}$ commutes with the algebra generated by $\{\Phi_{-},X_{-}\}$.
  
Now we are going to prove that $\{G_{\pm},L_{\pm}\}$ generate two $N=1$ superconformal algebras of central charge $\tfrac{21}{2}$. 
We compute explicitly $G_{\pm}:=(-\dfrac{1}{3})\left({\Phi_{\pm}}_{(1)}K_{\pm}\right)$ (see \ref{Phi-lambda-K}) and find that:

\begin{eqnarray}
G_{+}&=&\tfrac{1}{2}c^{i}\beta_{i}+\tfrac{1}{2}\partial\gamma^{i}b_{i}+\tfrac{1}{2}g^{ij}b_{i}\beta_{j}+\tfrac{1}{2}g^{ij}\Gamma^{l}_{ik}c^{k}b_{j}b_{l}+\tfrac{1}{2}g^{ij}\Gamma^{k}_{ij}\partial b_{k}\nonumber\\
&&+\tfrac{1}{2}g_{ij}\partial\gamma^{i}c^{j}+g^{ij}\Gamma^{k}_{il}\Gamma^{l}_{jm}\partial\gamma^{m}b_{k},\nonumber
\end{eqnarray}
\begin{eqnarray}
G_{-}&=&\tfrac{1}{2}c^{i}\beta_{i}+\tfrac{1}{2}\partial\gamma^{i}b_{i}+(-\tfrac{1}{2})g^{ij}b_{i}\beta_{j}+(-\tfrac{1}{2})g^{ij}\Gamma^{l}_{ik}c^{k}b_{j}b_{l}+(-\tfrac{1}{2})g^{ij}\Gamma^{k}_{ij}\partial b_{k}\nonumber\\
&&+(-\tfrac{1}{2})g_{ij}\partial\gamma^{i}c^{j}+(-1)g^{ij}\Gamma^{k}_{il}\Gamma^{l}_{jm}\partial\gamma^{m}b_{k}.\nonumber
\end{eqnarray}

Note that if we define $G:=G_{+}+G_{-}$, we have $G=c^{i}\beta_{i}+\partial\gamma^{i}b_{i}$, and $\{G,L:=\dfrac{1}{2}{G}_{(0)}G\}$ generate an $N=1$ superconformal algebra of central charge $c=21$, and with respect to $L$ the conformal weight of the generators is as follows (see example \ref{bc-betagamma system} and the definition of $G_{(0)}$ in section \ref{sec-the Chiral de Rham Complex}):
\begin{equation}
\Delta b_{i}=\frac{1}{2},\;\;\Delta c^{i}=\frac{1}{2},\;\;\Delta \gamma^{i}=0,\;\;\Delta\beta_{i}=1,\;\;\;\; i\in\{1,\dots ,7\}.
\end{equation}
\noindent
This implies that the conformal weights of the local sections in the coordinate chart are always positive. We will use this observation below.

In \ref{G+withG-}, we prove that $[{G_{+}}_{\lambda}G_{-}]=0$. By definition, $L_{+}=\dfrac{1}{2}{G_{+}}_{(0)}G_{+}$ and $L_{-}=\dfrac{1}{2}{G_{-}}_{(0)}G_{-}$. Then we have that $\{G_{+},L_{+}\}$ commute with $\{G_{-},L_{-}\}$, and that $L=L_{+}+L_{-}$. We will show that $\{G_{+},L_{+}\}$ is an $N=1$ superconformal algebra (the same proof works for $\{G_{-},L_{-}\}$). First we take care of $[{L_{+}}_{\lambda}L_{+}]$. As $\Delta L_{+}=2$, the observation above about conformal weights implies:
\begin{equation}
[{L_{+}}_{\lambda}L_{+}]=[L_{\lambda}L_{+}]=\partial L_{+}+2 L_{+}\lambda+ A \lambda^{2}+B\lambda^{3},\nonumber
\end{equation}
\noindent
where $A$ and $B$ are two fields with $\Delta A=1$ and $\Delta B=0$.\\
Consequently:
\begin{equation}
[L_{\lambda}A]=\partial A+A\lambda +C\lambda^{2},\;\; [L_{\lambda}B]=\partial B,\nonumber
\end{equation}
where $C$ is a field with $\Delta C=0$, i.e., $[L_{\lambda}C]=\partial C$.
Using the Jacobi identity (\ref{Jacobi identity}), we obtain
\begin{eqnarray}\label{auxiliary1L+lambdaL+}
[{L_{+}}_{\lambda}[L_{\mu}L]]&=&[{L_{+}}_{\mu}[{L_{+}}_{\lambda}L_{+}]]+[{[{L_{+}}_{\lambda}L_{+}]}_{\lambda+\mu}L_{+}].
\end{eqnarray}
Also:
\begin{eqnarray}\label{auxiliary2L+lambdaL+}
[{L_{+}}_{\lambda}[L_{\mu}L]]&=&[{L_{+}}_{\lambda} \partial L_{+}+2L_{+}\mu].
\end{eqnarray}
Working the right side of (\ref{auxiliary2L+lambdaL+}) :
\begin{align}
[{L_{+}}_{\lambda} \partial L_{+}+2L_{+}\mu]=&\left(\partial +\lambda\right)\left(\partial L_{+}+2L_{+}\lambda+A\lambda^{2}+B\lambda^{3}\right)\\\nonumber
&+2\mu\left(\partial L_{+}+2 L_{+}\lambda+ A \lambda^{2}+B\lambda^{3}\right).\nonumber
\end{align}
Working separately the summands on the right side of (\ref{auxiliary1L+lambdaL+}) and putting $\mu=0$:
\begin{eqnarray}
[{L_{+}}_{\mu}[{L_{+}}_{\lambda}L_{+}]]_{{|}_{\mu=0}}&=&[{L}_{\mu}\partial L_{+}+2 L_{+}\lambda+ A \lambda^{2}+B\lambda^{3}]_{{|}_{\mu=0}}\nonumber\\
&=&\partial^{2}L_{+}+2\partial L_{+}\lambda + \partial A\lambda^{2}+\partial B\lambda^{3},\nonumber
\end{eqnarray}
\begin{eqnarray}
[{[{L_{+}}_{\lambda}L_{+}]}_{\lambda}L_{+}]&=&[{\partial L_{+}+2 L_{+}\lambda+ A \lambda^{2}+B\lambda^{3}}_{\lambda} L]\nonumber\\
&=&(-\lambda)\left(\partial L_{+}+2 L_{+}\lambda+ A \lambda^{2}+B\lambda^{3}\right)\nonumber\\
&&+2\lambda\left(\partial L_{+}+2 L_{+}\lambda+ A \lambda^{2}+B\lambda^{3}\right)+A\lambda^{3}\nonumber\\
&&+(-C)\lambda^{4}+(-2)\partial C\lambda^{3}+(-1)\partial^{2}C\lambda^{2}+(-1)\partial B\lambda^{3}.\nonumber
\end{eqnarray}

\noindent
Equating the right sides of (\ref{auxiliary1L+lambdaL+}) and (\ref{auxiliary2L+lambdaL+}), putting $\mu=0$, and looking at the coefficient of $\lambda^{4}$ and $\lambda^{3}$ we get that $C=0$ and $\partial B=A$ respectively.\\
Working separately the summands on the right side of (\ref{auxiliary1L+lambdaL+}) and putting $\lambda=0$:
\begin{equation}
[{L_{+}}_{\mu}[{L_{+}}_{\lambda}L_{+}]]_{{|}_{\lambda=0}}=[{L_{+}}_{\mu}\partial L_{+}]=(\partial+\mu)\left(\partial L_{+}+2 L_{+}\mu+ A \mu^{2}+B\mu^{3}\right),\nonumber
\end{equation}
\begin{equation}
[{[{L_{+}}_{\lambda}L_{+}]}_{\lambda+\mu}L_{+}]_{{|}_{\lambda=0}}=[{\partial L_{+}}_{\mu} L_{+}]=(-1)\mu\left(\partial L_{+}+2 L_{+}\mu+ A \mu^{2}+B\mu^{3}\right).\nonumber
\end{equation}
Equating the right sides of (\ref{auxiliary1L+lambdaL+}) and (\ref{auxiliary2L+lambdaL+}), putting $\lambda=0$, and looking at the coefficient of $\mu^{3}$ we get that $\partial B=0.$ 
But this implies that $A =0$. Also, as $\Delta B=0$, we know that $B$ is a function, therefore $\partial B=0$ implies that $B$ is a constant.\\
Hence $[{L_{+}}_{\lambda}L_{+}]=\left(\partial+2\lambda\right)L_{+}+\frac{c_{+}}{12}\lambda^{3},$ where $c_{+}$ is a constant.\\
Now we compute $[{L_{+}}_{\lambda}G_{+}]$. Proceeding as above we get that  $\Delta G_{+}=\frac{3}{2}$ and, hence:
\begin{equation}
[{L_{+}}_{\lambda}G_{+}]=[L_{\lambda}G_{+}]=\partial G_{+}+\dfrac{3}{2}G_{+}\lambda+C\lambda^{2},\nonumber
\end{equation}
\noindent
where $C$ is a field with $\Delta C=\dfrac{1}{2},$ and 
\begin{equation}
[{L}_{\lambda}C]=\partial C+\dfrac{1}{2}C\lambda.\nonumber
\end{equation}
Using the Jacobi identity (\ref{Jacobi identity}), we obtain
\begin{equation}\label{auxiliary1L+withG+}
[{G_{+}}_{\lambda}[{L_{+}}_{\mu}L_{+}]]=[L_{\mu}[{G_{+}}_{\lambda}L_{+}]]+[{[{G_{+}}_{\lambda}L_{+}]}_{\lambda+\mu}L]. 
\end{equation}
We also have:
\begin{equation}\label{auxiliary2L+withG+}
[{G_{+}}_{\lambda}[{L_{+}}_{\mu}L_{+}]]=[{G_{+}}_{\lambda}(\partial+2\mu)L_{+}].
\end{equation}
Working the right side of (\ref{auxiliary2L+withG+}):
\begin{eqnarray}
[{G_{+}}_{\lambda}(\partial+2\mu)L_{+}]&=&(-1)(\lambda+\partial)\left(\partial G_{+}+\dfrac{3}{2}(-\lambda-\partial)G_{+}+(\lambda+\partial)^{2}C\right)\nonumber\\
&&+2\mu(-1)\left(\partial G_{+}+\dfrac{3}{2}(-\lambda-\partial)G_{+}+(\lambda+\partial)^{2}C\right).\nonumber
\end{eqnarray}
Working separately the summands on the right side of (\ref{auxiliary1L+withG+}) and putting $\mu=0$:
\begin{eqnarray}
[L_{\mu}[{G_{+}}_{\lambda}L_{+}]]_{{|}_{\mu=0}}&=&[L_{\mu}\dfrac{1}{2}\partial G_{+}+\dfrac{3}{2}G_{+}\lambda+(-C)\lambda^{2}]_{{|}_{\mu=0}}\nonumber\\
&=&\dfrac{1}{2}(\partial^2G_{+})+\dfrac{3}{2}\partial G_{+}\lambda+(-1)\partial C\lambda^{2},\nonumber
\end{eqnarray}
\begin{eqnarray}
[{[{G_{+}}_{\lambda}L_{+}]}_{\lambda+\mu}L]_{{|}_{\mu=0}}&=&[\dfrac{1}{2}\partial G_{+}+\dfrac{3}{2}G_{+}\lambda+{(-C)\lambda^{2}}_{\lambda}L]\nonumber\\
&=&(-\dfrac{1}{2})\lambda\left(\dfrac{1}{2}\partial G_{+}+\dfrac{3}{2}G_{+}\lambda+(-C)\lambda^{2}\right)\nonumber\\
&&+\dfrac{3}{2}\lambda\left(\dfrac{1}{2}\partial G_{+}+\dfrac{3}{2}G_{+}\lambda+(-C)\lambda^{2}\right)+(-\dfrac{1}{2})C\lambda^{3}.\nonumber
\end{eqnarray}
Equating the right sides of (\ref{auxiliary1L+withG+}) and (\ref{auxiliary2L+withG+}) putting $\mu=0$, and looking at the coefficient of $\lambda^{3}$ we get that $C=0$ which implies $[{L_{+}}_{\lambda}G_{+}]=\partial G_{+}+\dfrac{3}{2}G_{+}\lambda.$\\
Now we compute $[{G_{+}}_{\lambda}G_{+}]$. By definition of $L_{+}$ and conformal weights positivity we have:
\begin{equation}
[{G_{+}}_{\lambda}G_{+}]=[{G}_{\lambda}G_{+}]=2L_{+}+D\lambda+E\lambda^{2},\nonumber 
\end{equation}
with $D$ and $E$ two fields of conformal weights $1$ and $0$ respectively.

Using the Jacobi identity (\ref{Jacobi identity}) we obtain
\begin{equation}\label{auxiliary1G+withG+}
[{L_{+}}_{\lambda}[G_{\mu}G]]=[{G_{+}}_{\mu}[{L_{+}}_{\lambda}G_{+}]]+[[{L_{+}}_{\lambda}G_{+}]_{\lambda+\mu}G_{+}].
\end{equation}
We also have:
\begin{equation}\label{auxiliary2G+withG+}
[{L_{+}}_{\lambda}[G_{\mu}G]]=[{L_{+}}_{\lambda}2L]=[{L_{+}}_{\lambda}2L_{+}]=2(\partial+2\lambda)L_{+}+2\dfrac{c_{+}}{12}\lambda^{3}.
\end{equation}
Working separately the summands on the right side of (\ref{auxiliary1G+withG+}):
\begin{align}
[{G_{+}}_{\mu}[{L_{+}}_{\lambda}G_{+}]]=&[{G_{+}}_{\mu}\partial G_{+}+\dfrac{3}{2}G_{+}\lambda]\nonumber\\
=&(\partial+\mu)\left(2L_{+}+D\mu+E\mu^{2}\right)+\dfrac{3}{2}\lambda\left(2L_{+}+D\mu+E\mu^{2}\right),\nonumber
\end{align}
\begin{align}
[{[{L_{+}}_{\lambda}G_{+}]}_{\lambda+\mu}G_{+}]=&[{\partial G_{+}+\dfrac{3}{2}G_{+}\lambda}_{\lambda+\mu}G_{+}]\nonumber\\
=&(-1)(\lambda+\mu)\left(2L_{+}+D(\lambda+\mu)+E(\lambda+\mu)^{2}\right)\nonumber\\
&+\dfrac{3}{2}\lambda\left(2L_{+}+D(\lambda+\mu)+E(\lambda+\mu)^{2}\right).\nonumber
\end{align}
Equating both the right sides of (\ref{auxiliary1G+withG+}) and (\ref{auxiliary2G+withG+}) and looking at the coefficient of $\lambda^{2}$ with $\mu=0$ we get that $D=0$ while looking at the coefficient of $\lambda^{3}$ with $\mu=0$ we get that $E=\dfrac{c_{+}}{3}$. Therefore $[{G_{+}}_{\lambda}G_{+}]=2L_{+}+\dfrac{c_{+}}{3}\lambda^{2}$.\\
We have proved that  $\{G_{+},L_{+}\}$ and $\{G_{-},L_{-}\}$ are two commuting  $N=1$ superconformal algebras of central charge $c_{+}$ and $c_{-}$ respectively, such that $c_{+}+c_{-}=21.$\\
We can calculate the central charge by computing explicitly the lambda bracket $[{G_{+}}_{\lambda}G_{+}]=[G_{\lambda}G_{+}]$ and looking at the coefficient of $\lambda^{2}$.\\
Computing the coefficient of $\lambda^{2}$:
\small
\begin{align}
\frac{1}{2}\left(\frac{\partial^{2}}{\partial \lambda}\left(\left[{G_{+}}_{\lambda}G_{+}\right]\right)\right)=&\dfrac{7}{4}+(-\dfrac{1}{4})\left(g^{lm}_{,l}\right)_{,m}+\dfrac{1}{4}\left(g^{lm}\Gamma^{n}_{lm}\right)_{,n}+\dfrac{1}{2}g^{lm}\Gamma^{n}_{la}\Gamma^{a}_{mn}+\dfrac{7}{4}\nonumber\\ 
=&\dfrac{7}{2}+\dfrac{1}{2}g^{lm}\Gamma^{n}_{la}\Gamma^{a}_{mn}+\dfrac{1}{2}\left(g^{lm}\Gamma^{n}_{lm}\right)_{,n}\nonumber\\ 
=&\dfrac{7}{2}+\left(-\dfrac{1}{2}\right)g^{lm}\Gamma^{n}_{la}\Gamma^{a}_{mn}+\dfrac{1}{2}g^{lm}\left(\Gamma^{n}_{lm}\right)_{,n}\nonumber\\
=&\dfrac{7}{2}+\dfrac{1}{2}g^{lm}R_{lm}\nonumber\\
=&\dfrac{7}{2}.\nonumber
\end{align}
\normalsize
The last equality follows because $G_{2}$-manifolds are Ricci flat, Lemma \ref{lemmaG2Ricciflat}. It is interesting to note that, as in other places,  the scalar curvature of the manifold, i.e., $g^{ij}R_{ij}$, appears here explicitly. Then, the central charge of the $N=1$ superconformal algebra $\{G_{+},L_{+}\}$ is $c_{+}=\frac{21}{2}$ and consequently as $c_{+}+c_{-}=21$, the central charge of $\{G_{-},L_{-}\}$ is $c_{-}=\frac{21}{2}$.

To conclude the proof of the theorem we only need to check that $G_{+}$(resp. $G_{-}$) commutes with $\Phi_{-}$(resp. $\Phi_{+}$). This is accomplished by performing an explicit computation in \ref{G+withPhi-}.
\end{proof}

\begin{remark}\label{rm-commentaboutSpin(7)case}
Note that $X_{+}$ (\ref{explicitformforX+}) is not exactly the section produced by Theorem \ref{theorem Reimundo} using the 4-form, because, besides the correction (\ref{correction4form}), we need to add other terms. That this would happen was already observed in \cite{Ekstrand-Heluani-Kallen-Zabzine13} while working the flat case. For this reason, Conjecture \ref{conjecture} was formulated using only the $3$-form. The expression (\ref{explicitformforX+}) that we have obtained here works in any coordinate system.
\end{remark}

\begin{remark}
It should be noticed that the complexity of the explicit computations performed is greater than in the other holonomy cases. This is due to the lack of special coordinate systems simplifying the corrections to the fields. This in turn is a reflection of our lack of knowledge about the geometry of manifolds with $G_{2}$ holonomy.  
\end{remark}

\subsection{Proof of the Conjecture}\label{subsec-Towards a proof of the Conjecture}
In this subsection, we prove Conjecture \ref{conjecture}.
\begin{theorem}\label{maintheorem}
The sections pairs $\{\Phi_{+},K_{+}\}$ and $\{\Phi_{-},K_{-}\}$ generate two commuting copies of the Shatashvili-Vafa $G_{2}$ superconformal algebra.
\end{theorem}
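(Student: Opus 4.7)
\noindent\textit{Proof plan.} The strategy is to verify each of the OPEs listed in subsection~\ref{Shatashvili-Vafa algebra} for the explicit candidate generators, reducing to abstract Lie-conformal manipulations as much as possible so that only a few ``seed'' brackets and the single non-linear identity need to be computed by hand in local coordinates. Theorem~\ref{maintheorem2} already supplies, for each sign, the $N=1$ Virasoro pair $\{L_{\pm},G_{\pm}\}$ of central charge $\tfrac{21}{2}$, the tri-critical Ising pair $\{\Phi_{\pm},X_{\pm}\}$, the identification $K_{\pm}=G_{(0)}\Phi_{\pm}$, and the mutual commutation of the plus and minus sectors at the level of $\{G_{\pm},\Phi_{\pm}\}$. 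The only missing generator is the superpartner of $X_{\pm}$, which I define by $M_{\pm}:=G_{\pm(0)}X_{\pm}$: an odd, well-defined global section of conformal weight $\tfrac{5}{2}$.

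I would first dispatch the linear $\lambda$-brackets. The brackets $[{L_{\pm}}_{\lambda}\Phi_{\pm}]$, $[{L_{\pm}}_{\lambda}K_{\pm}]$, $[{L_{\pm}}_{\lambda}X_{\pm}]$ and $[{L_{\pm}}_{\lambda}M_{\pm}]$ are pinned down by conformal-weight positivity combined with Jacobi~(\ref{Jacobi identity}) and skew-symmetry~(\ref{skewsymmetry}) applied to the $N=1$ brackets already established in Theorem~\ref{maintheorem2}, exactly in the style of the proof of that theorem. The relation $[{G_{\pm}}_{\lambda}\Phi_{\pm}]=K_{\pm}$ is by definition of $K_{\pm}$, and then $[{\Phi_{\pm}}_{\lambda}X_{\pm}]$, $[{G_{\pm}}_{\lambda}X_{\pm}]$, $[{G_{\pm}}_{\lambda}K_{\pm}]$ and $[{G_{\pm}}_{\lambda}M_{\pm}]$ all fall out of Jacobi applied to iterated brackets such as $[{G_{\pm}}_{\lambda}[{\Phi_{\pm}}_{\mu}\Phi_{\pm}]]=6[{G_{\pm}}_{\lambda}X_{\pm}]$, using the seeds $[{\Phi_{\pm}}_{\lambda}\Phi_{\pm}]=-\tfrac{7}{2}\lambda^{2}+6X_{\pm}$ and $[{G_{\pm}}_{\lambda}G_{\pm}]=2L_{\pm}+\tfrac{7}{2}\lambda^{2}$ from Theorem~\ref{maintheorem2}. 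Along the way one checks as a consistency condition that the coefficient of $\lambda$ in $[{G_{\pm}}_{\lambda}X_{\pm}]$ is indeed $-\tfrac{1}{2}G_{\pm}$, as required by the Shatashvili--Vafa normalization of $M_{\pm}$.

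The non-linear brackets $[{\Phi_{\pm}}_{\lambda}M_{\pm}]$, $[{X_{\pm}}_{\lambda}K_{\pm}]$, $[{X_{\pm}}_{\lambda}M_{\pm}]$, $[{K_{\pm}}_{\lambda}K_{\pm}]$, $[{K_{\pm}}_{\lambda}M_{\pm}]$ and $[{M_{\pm}}_{\lambda}M_{\pm}]$ are harder, since their right-hand sides involve the normally ordered products $:G_{\pm}\Phi_{\pm}:$, $:G_{\pm}X_{\pm}:$, $:L_{\pm}X_{\pm}:$, $:G_{\pm}K_{\pm}:$, $:L_{\pm}\Phi_{\pm}:$ and $:G_{\pm}M_{\pm}:$. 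My plan here is to resist direct coordinate expansion of these brackets and instead write each one as an iterated Fourier mode of already known data---for instance, $M_{\pm}=\tfrac{1}{6}G_{\pm(0)}(\Phi_{\pm(0)}\Phi_{\pm})$, so that $[{\Phi_{\pm}}_{\lambda}M_{\pm}]$ can be opened with Jacobi plus the known $[{\Phi_{\pm}}_{\lambda}G_{\pm}]$---and then to push the brackets across normally ordered products using the Borcherds identity~(\ref{Borcherds identity}) and the non-commutative Wick formula~(\ref{Wickformula}). Performed systematically, this reduces every non-linear bracket to a closed combination of previously computed objects, \emph{modulo} the non-linear identity~(\ref{ideal in the definition of the algebra}), which is carried as a provisional hypothesis at this stage.

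The heart of the proof, and the step I expect to be the main obstacle, is the verification of the relation
\begin{equation*}
4:G_{\pm}X_{\pm}:-2:\Phi_{\pm}K_{\pm}:-4\partial M_{\pm}-\partial^{2}G_{\pm}=0
\end{equation*}
itself. Unlike the brackets above, this identity cannot be reached by abstract Lie-conformal manipulations; both sides must be expanded in a local chart using the explicit formulas (\ref{correction3form})--(\ref{correction4form}) for $\Phi_{\pm}$ and for the field corresponding to a $4$-form, the explicit expression (\ref{explicitformforX+}) for $X_{\pm}$, and the coordinate formulas for $G_{\pm}$, $K_{\pm}$, $M_{\pm}$. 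One then collapses the resulting tensor expression using the full battery of $G_{2}$ identities from section~\ref{sec-G2 holonomy manifolds}: the contractions (\ref{dosPhitresg})--(\ref{identityvarphi-wedge-varphi}), the parallelism equations (\ref{Phicovariantlyconstant})--(\ref{Psicovariantlyconstant}), Ricci flatness (Lemma~\ref{lemmaG2Ricciflat}), and the curvature identities of Lemmas~\ref{lemmaG2Curvatureidentity}--\ref{lemmaG2Curvatureidentitywiththreeform}. I expect the residual obstruction, after the bosonic and fermionic contractions have been collected, to be a tensor built from $R_{ijkl}$ paired against $\varphi$ or $\psi$, vanishing precisely because $R\in\mathrm{Sym}^{2}(\mathfrak{g}_{2})$ for a $G_{2}$-holonomy metric (this is where $p_{1}(M)$ would surface, as foreshadowed in the introduction). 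Once the relation is established, commutativity of the two copies follows from Remark~\ref{remark-the SV algebra is generated in conf weight 3/2}: the Shatashvili--Vafa algebra is generated by $\{G,\Phi\}$, so by Jacobi it suffices to know the four vanishings $[{\Phi_{+}}_{\lambda}\Phi_{-}]=[{G_{+}}_{\lambda}\Phi_{-}]=[{\Phi_{+}}_{\lambda}G_{-}]=[{G_{+}}_{\lambda}G_{-}]=0$, all of which are already contained in Theorem~\ref{maintheorem2}.
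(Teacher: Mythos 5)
Your plan is essentially the paper's own proof: the same four-step decomposition (start from Theorem~\ref{maintheorem2}, derive the linear brackets by skew-symmetry, Jacobi, Borcherds and the derivation property of $G_{(0)}$, derive the non-linear brackets via the Wick formula and Borcherds identity modulo the relation~(\ref{ideal in the definition of the algebra}), and finally verify that relation by a long coordinate computation), with commutativity of the two copies deduced exactly as in Remark~\ref{rm-somebracketsalreadychecked} from the weight-$\tfrac{3}{2}$ generators. One small correction: $[{G_{\pm}}_{\lambda}\Phi_{\pm}]=K_{\pm}$ is \emph{not} ``by definition'' --- the definition of $K_{\pm}$ only fixes the zeroth product ${G_{\pm}}_{(0)}\Phi_{\pm}$, while the vanishing of ${G_{\pm}}_{(1)}\Phi_{\pm}$ and ${G_{\pm}}_{(2)}\Phi_{\pm}$ (which is not forced by conformal weight) must be checked in local coordinates using $\nabla\varphi=0$ and Lemma~\ref{lemmaG2Curvatureidentitywiththreeform}, as the paper does in subsection~\ref{G+ lambda Phi+}, and likewise for the $\lambda^{2}$ coefficient of $[{L_{\pm}}_{\lambda}\Phi_{\pm}]$ in subsection~\ref{L+ lambda Phi+}.
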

In the above subsection \ref{G2 holonomy manifolds are superconformal}, besides the global sections $\Phi_{\pm}$ and $K_{\pm}=G_{(0)}\Phi_{\pm}$, we introduced the global sections:
\begin{gather*}
X_{\pm}:=\dfrac{1}{6}\left({\Phi_{\pm}}_{(0)}\Phi_{\pm}\right),\quad G_{\pm}:=(-\dfrac{1}{3})\left({\Phi_{\pm}}_{(1)}K_{\pm}\right),\quad L_{\pm}:=\dfrac{1}{2}\left({G_{\pm}}_{(0)}G_{\pm}\right).
\end{gather*}
Now we introduce $M_{\pm}:=G_{(0)}X_{\pm}={G_{\pm}}_{(0)}X_{\pm}$. Note that, again, these sections are globally well defined because they are defined in terms of $X_{\pm}$ using the supersymmetric generator $G_{(0)}$.
\begin{remark}\label{rm-somebracketsalreadychecked}
The reader should note that Theorem \ref{maintheorem} not only proves that we have two pair of commuting copies of the $N=1$ superconformal algebra. Namely it is implicit in the proof that $\{\Phi_{\pm}, X_{\pm}\}$ satisfy the commutation rules of the same named fields of the Shatashvili-Vafa $G_{2}$ superconformal algebra (subsection \ref{Shatashvili-Vafa algebra}), i.e.,:
\begin{gather*}
[{\Phi_{\pm}}_{\lambda}\Phi_{\pm}]=(-\frac{7}{2}) \lambda^{2}+6X_{\pm},\;\; [{\Phi_{\pm}}_{\lambda}X_{\pm}]=-\frac{15}{2}\Phi_{\pm}\lambda-\frac{5}{2}\partial\Phi_{\pm},\\
[{X_{\pm}}_{\lambda}X_{\pm}]=\frac{35}{24}\lambda^{3}-10X_{\pm}\lambda-5\partial X_{\pm}.
\end{gather*}
It follows from the way we define the global sections (see also Remark \ref{remark-the SV algebra is generated in conf weight 3/2}) that  $\{L_{+}, X_{+},K_{+},M_{+}\}$ (resp. $\{L_{-},X_{-}, K_{-}, M_{-}\}$)  can be expressed in terms of $\Phi_{+}$  and $G_{+}$ (resp. $\Phi_{-}$  and $G_{-}$ ). Remember also that in Theorem \ref{maintheorem} we proved that $\{\Phi_{+},G_{+}\}$ commute with $\{\Phi_{-},G_{-}\}$. Therefore $\{L_{+}, G_{+},\Phi_{+}, X_{+},K_{+},M_{+}\}$ commute with $\{L_{-}, G_{-}\Phi_{-},X_{-},K_{-},M_{-}\}$.
\end{remark}
By the remark above, to prove the conjecture we only need to check the $\lambda$--brackets between the fields $\{L_{\pm}, G_{\pm}, \Phi_{\pm}, X_{\pm}, K_{\pm}, M_{\pm}\}$ satisfy the same $\lambda$-brackets of the Shatashvili-Vafa $G_{2}$ algebra (ruling out the ones have already been checked in the way of proving Theorem \ref{maintheorem}). We work the plus case, the minus case is similar.\\ 
We prove first the linear $\lambda$-brackets (subsection \ref{subs-linearbrackets}); and then the non-linear $\lambda$-brackets (\ref{subsubsec-non linear lambda brackets}) under the assumption that we have proved the relation (\ref{ideal in the definition of the algebra}):
\begin{align}\label{assumption of the relation}
0=4:G_{\pm}X_{\pm}:-2:\Phi_{\pm} K_{\pm}:-4\partial M_{\pm}-\partial^{2}G_{\pm}.
\end{align}
Finally (subsection \ref{subs-checkingtherelation}) we prove the above relation.
\begin{remark}\label{rm-discussion about non-linearities}
We need the assumption (\ref{assumption of the relation}) only to prove the non-linear $\lambda$-brackets, the linear ones follow without the assumption. In fact, the relation (\ref{assumption of the relation}) is used only to prove the $\lambda$-bracket $[{\Phi_{+}}_{\lambda}M_{+}]$, the others non-linear $\lambda$-brackets are deduced from this one. Similarly we could have computed first another non-linear $\lambda$-bracket using the relation, and then deduce the others ones, that is, there is nothing special in $[{\Phi_{+}}_{\lambda}M_{+}]$. Even more, if we are able to check a non-linear $\lambda$-bracket without the assumption, then we can prove the relation (\ref{assumption of the relation}) using the Jacobi identity (\ref{Jacobi identity}), remenber that the space of global sections of the CDR is a vertex algebra. The moral here is that we need to verify at least one non-linear identity among the fields. We opted here to check the relation (\ref{assumption of the relation}).
\end{remark}
To simplify the notation we denote the sypersymmetric generator $G_{(0)}$ by $D$. Remember that $D$ is an odd derivation of all the $n$-products; and that $[D,\partial]=0$ because $D^{2}=\partial$.
 
\subsubsection{Linear \texorpdfstring{$\lambda$}{}-brackets}\label{subs-linearbrackets}
\begin{itemize}
\item[1)] $[{\Phi_{+}}_{\lambda}K_{+}]$\\
By definition of $G_{+}$ we have that ${\Phi_{+}}_{(1)}K_{+}=(-3)G_{+}$. We are left with ${\Phi_{+}}_{(2)}K_{+}$ and ${\Phi_{+}}_{(0)}K_{+}$, because ${\Phi_{+}}_{(n)}K_{+}=0$ for $n\ge 3$ by the positivity of the conformal weight.

Computing ${\Phi_{+}}_{(2)}K_{+}$:\\
By skewsymmetry (\ref{skewsymmetry}), we have $[{\Phi_{+}}_{\lambda}K_{+}]=(-1)[{K_{+}}_{-\partial-\lambda}\Phi_{+}]$. Then,  
\begin{align*}
&{\Phi_{+}}_{(2)}K_{+}\tfrac{\lambda^{2}}{2}+{\Phi_{+}}_{(1)}K_{+}\lambda+{\Phi_{+}}_{(0)}K_{+}\\
=&-{K_{+}}_{(2)}\Phi_{+}\tfrac{\left(-\partial-\lambda\right)^{2}}{2}-{K_{+}}_{(1)}\Phi_{+}\left(-\partial-\lambda\right)-{K_{+}}_{(0)}\Phi_{+}.
\end{align*}
Expanding the right side of the last equality and looking at the coefficient of $\lambda^{2}$ we get that ${\Phi_{+}}_{(2)}K_{+}=-{K_{+}}_{(2)}\Phi_{+}$. On the other hand, as $D$ is an odd derivation of the $n$-products, we have
\begin{align*}
0=D({\Phi_{+}}_{(2)}\Phi_{+})={D\Phi_{+}}_{(2)}\Phi_{+}+(-1){\Phi_{+}}_{(2)}D\Phi_{+}={K_{+}}_{(2)}\Phi_{+}-{\Phi_{+}}_{(2)}K_{+}.
\end{align*}
This implies ${\Phi_{+}}_{(2)}K_{+}=0$.

Computing ${\Phi_{+}}_{(0)}K_{+}$:\\
Applying Borcherds identity (\ref{Borcherds identity}) with $a=\Phi_{+}, b=K_{+}, c=\left|0\right>$,  $n=-1, m=2, k=-2$, we obtain
\begin{align*}
&\binom{2}{1}\left({\Phi_{+}}_{(0)}K_{+}\right)_{(-1)}\left|0\right>+\binom{2}{2}\left({\Phi_{+}}_{(1)}K_{+}\right)_{(-2)}\left|0\right>\\
=&\binom{-1}{0}{\Phi_{+}}_{(1)}\left({K_{+}}_{(-2)}\left|0\right>\right)+(-1)\binom{-1}{1}{\Phi_{+}}_{(0)}\left({K_{+}}_{(-1)}\left|0\right>\right).
\end{align*}
Then
\begin{align}\label{auxiliary1 Phi-0-K}
{\Phi_{+}}_{(0)}K_{+}=3\partial G_{+}+{\Phi_{+}}_{(1)}\partial K_{+}.
\end{align}
As we have already computed $[{\Phi_{+}}_{\lambda}\Phi_{+}]$, using sesquilinearity (\ref{sesquilinearity}), we have that ${\Phi_{+}}_{(1)}\partial\Phi_{+}=6X_{+}$. Using that $D$ is an odd derivation we get $6M_{+}=D({\Phi_{+}}_{(1)}\partial\Phi_{+})={K_{+}}_{(1)}\partial\Phi_{+}-{\Phi_{+}}_{(1)}\partial K_{+}$. Then,
\begin{align}\label{auxiliary2 Phi-0-K}
{\Phi_{+}}_{(1)}\partial K_{+}={K_{+}}_{(1)}\partial\Phi_{+}-6M_{+}.
\end{align}
We also have $\partial\left({\Phi_{+}}_{(1)}K_{+}\right)={\partial\Phi_{+}}_{(1)}K_{+}+{\Phi_{+}}_{(1)}\partial K_{+}$. Which implies
\begin{align}\label{auxiliary3 Phi-0-K}
(-3)\partial G={\partial\Phi_{+}}_{(1)}K_{+}+{\Phi_{+}}_{(1)}\partial K_{+}.
\end{align} 
As ${\Phi_{+}}_{(n)}K_{+}=0$ for $n\ge 2$, by skewsymmetry (\ref{skewsymmetry}) we have
\small 
\begin{align*}
[{\partial\Phi_{+}}_{\lambda}K_{+}]&=(-1)[{K_{+}}_{-\lambda-\partial}\partial\Phi_{+}]\\
\quad&=(-1)\left({K_{+}}_{(2)}\partial\Phi_{+}\left(-\lambda-\partial\right)^{2}+{K_{+}}_{(1)}\partial\Phi_{+}\left(-\lambda-\partial\right)+{K_{+}}_{(0)}\partial\Phi_{+}\right).
\end{align*}
\normalsize
Then
\begin{align}\label{auxiliary4 Phi-0-K}
{\partial\Phi_{+}}_{(1)}K_{+}={K_{+}}_{(1)}\partial\Phi_{+}-\partial\left({K_{+}}_{(2)}\partial\Phi_{+}\right).
\end{align}
Applying Borcherds identity (\ref{Borcherds identity}) with $a=\Phi_{+}, b=K_{+}, c=\left|0\right>$, $n=1, m=-2, k=1$ we obtain
\small
\begin{align*}
\binom{-2}{0}\left({\Phi_{+}}_{(1)}K_{+}\right)_{(-1)}\left|0\right>=&{K_{+}}_{(2)}\left({\Phi_{+}}_{(-2)}\left|0\right>\right)+(-1){K_{+}}_{(1)}\left({\Phi_{+}}_{(-1)}\left|0\right>\right)\\
{\Phi_{+}}_{(1)}K_{+}=&{K_{+}}_{(2)}\partial\Phi_{+}+(-1){K_{+}}_{(1)}\Phi_{+}.
\end{align*}
\normalsize
Then using skewsymmetry (\ref{skewsymmetry}) we get:
\begin{align}\label{auxiliary5 Phi-0-K}
{K_{+}}_{(2)}\partial\Phi_{+}=2{\Phi_{+}}_{(1)}K_{+}=(-6)G_{+}.
\end{align}
Finally substituting the equations (\ref{auxiliary5 Phi-0-K}), (\ref{auxiliary4 Phi-0-K}), (\ref{auxiliary3 Phi-0-K}), (\ref{auxiliary2 Phi-0-K}) and (\ref{auxiliary1 Phi-0-K}), we get: ${\Phi_{+}}_{(0)}K_{+}=(-3)M_{+}+(-\frac{3}{2})\partial G_+{}$.
Then we have proved that
\begin{align*}
[{\Phi_{+}}_{\lambda}K_{+}]=-3G_{+}\lambda-3\left(M_{+}+\frac{1}{2}\partial G_{+}\right).
\end{align*} 
\item[2)]$[{K_{+}}_{\lambda}K_{+}]$\\
As $D$ is an odd derivation of the $n$-products we have $D\left({\Phi_{+}}_{(n)}K_{+}\right)={K_{+}}_{(n)}K_{+}-{\Phi_{+}}_{(n)}\partial \Phi_{+}$ for all $n\in \mathbb{Z}$. Then 
\begin{align}\label{auxiliary K-lambda-K}
{K_{+}}_{(n)}K_{+}=D\left({\Phi_{+}}_{(n)}K_{+}\right)+{\Phi_{+}}_{(n)}\partial \Phi_{+}.
\end{align}
We already know $[{\Phi_{+}}_{\lambda}K_{+}]$ and $[{\Phi_{+}}_{\lambda}\Phi_{+}]$, specialising equation (\ref{auxiliary K-lambda-K}) for $n=3,2,1,0$ we conclude that:
\begin{align*}
[{K_{+}}_{\lambda}K_{+}]=-\frac{21}{6}\lambda^{3}+6\left(X_{+}-L_{+}\right)\lambda+3\partial\left(X_{+}-L_{+}\right).
\end{align*}
\item[3)]$[{G_{+}}_{\lambda} \Phi_{+}]$\\
By positivity of the conformal weight and the definition of $K_{+}:={G}_{(0)}\Phi_{+}={G_{+}}_{(0)}\Phi_{+}$ we have that 
\begin{align*}
[{G_{+}}_{\lambda} \Phi_{+}]=A\dfrac{\lambda^{2}}{2}+B\lambda+K_{+},
\end{align*}
where $A$ and $B$ are two fields.
To find $A$ and $B$ we perform an explicit computation \ref{G+ lambda Phi+}, and found that $A=B=0$. Then we conclude:
\begin{align*}
[{G_{+}}_{\lambda} \Phi_{+}]=K_{+}.
\end{align*}
\item[4)]$[{L_{+}}_{\lambda}\Phi_{+}]$\\
As $\Delta \Phi_{+}=\tfrac{3}{2}$ we have
\begin{align*}
[{L_{+}}_{\lambda}\Phi_{+}]=[L_{\lambda}\Phi_{+}]=A\lambda^{2}+(\partial+\frac{3}{2}\lambda)\Phi_{+},
\end{align*}
where $A$ is a field. To find $A$ we perform and explicit computation \ref{L+ lambda Phi+}  and found that $A$=0. Then we conclude:
\begin{align*}
[{L_{+}}_{\lambda}\Phi_{+}]=(\partial+\frac{3}{2}\lambda)\Phi_{+}.
\end{align*}
\item[6)]$[{L_{+}}_{\lambda}X_{+}]$\\
Applying the Jacobi identity (\ref{Jacobi identity}) we have
\begin{align*}
[{L_{+}}_{\lambda}X_{+}]=[{L_{+}}_{\lambda}\tfrac{1}{6}[{\Phi_{+}}_{\mu}\Phi_{+}]]=\frac{1}{6}\left([{\Phi_{+}}_{\mu}[{L_{+}}_{\lambda}\Phi_{+}]]+[{[{L_{+}}_{\lambda}\Phi_{+}]}_{\lambda+\mu}\Phi_{+}]\right).
\end{align*}
From where
\begin{align*}
[{L_{+}}_{\lambda}X_{+}]=-\frac{7}{24}\lambda^{3}+2X_{+}\lambda+\partial X_{+}.
\end{align*}
\item[7)]$[{L_{+}}_{\lambda}K_{+}]$\\
Applying the Jacobi identity (\ref{Jacobi identity}) we have
\begin{align*}
[{L_{+}}_{\lambda}K_{+}]=[{L_{+}}_{\lambda}[{G_{+}}_{\mu}\Phi_{+}]]=[{G_{+}}_{\mu}[{L_{+}}_{\lambda}\Phi_{+}]]+[{[{L_{+}}_{\lambda}G_{+}]}_{\lambda+\mu}\Phi_{+}].
\end{align*}
We conclude that:
\begin{align*}
[{L_{+}}_{\lambda}K_{+}]=(\partial+2\lambda)K_{+}.
\end{align*}
\item[8)]$[{G_{+}}_{\lambda}X_{+}]$\\
Computing in much the same way as $[{L_{+}}_{\lambda}X_{+}]$, we obtain:
\begin{align*}
[{G_{+}}_{\lambda}X_{+}]=-\frac{1}{2}G_{+}\lambda+M_{+}.
\end{align*}
\item[9)]$[{G_{+}}_{\lambda}K_{+}]$\\
In the same way as $[{L_{+}}_{\lambda}K_{+}]$, we obtain:
\begin{align*}
[{G_{+}}_{\lambda}K_{+}]=3\Phi_{+}\lambda+\partial\Phi_{+}.
\end{align*}
\item[10)]$[{L_{+}}_{\lambda}M_{+}]$
\begin{align}\label{auxiliary1 L+ lambda M+}
[{L_{+}}_{\lambda}[{G_{+}}_{\mu}X_{+}]]=&[{L_{+}}_{\lambda}(-\tfrac{1}{2})G_{+}\mu+M_{+}]\\
=&(-\tfrac{1}{2})[{L_{+}}_{\lambda}G_{+}]\mu+[{L_{+}}_{\lambda}M_{+}].
\end{align}
On the other hand using the Jacobi identity (\ref{Jacobi identity}) we have:
\begin{align}\label{auxiliary2 L+ lambda M+}
[{L_{+}}_{\lambda}[{G_{+}}_{\mu}X_{+}]]=[{G_{+}}_{\mu}[{L_{+}}_{\lambda}X_{+}]]+[{[{L_{+}}_{\lambda}G_{+}]}_{\lambda+\mu}X_{+}].
\end{align}
Equating the right sides of the equations (\ref{auxiliary1 L+ lambda M+}) and (\ref{auxiliary2 L+ lambda M+}), expanding using the already known $\lambda$-brackets and then looking at the coefficients of the $\lambda$'s we get:
\begin{align*}
[{L_{+}}_{\lambda}M_{+}]=-\frac{1}{4}G_{+}\lambda^{2}+\frac{5}{2}M_{+}\lambda+\partial M_{+}.
\end{align*}

\item[11)]$[{G_{+}}_{\lambda}M_{+}]$\\
Computing in much the same way as $[{L_{+}}_{\lambda}M_{+}]$, we obtain:
\begin{align*}
[{G_{+}}_{\lambda}M_{+}]=-\frac{7}{12}\lambda^{3}+\left(L_{+}+4X_{+}\right)\lambda+\partial X_{+},
\end{align*}

\end{itemize}

\subsubsection{Non linear \texorpdfstring{$\lambda$}{}-brackets}\label{subsubsec-non linear lambda brackets}
\begin{itemize}
\item[1)]$[{\Phi_{+}}_{\lambda}M_{+}]$\\
Computing ${\Phi_{+}}_{(3)}M_{+}$:
\begin{align*}
{\Phi_{+}}_{(3)}M_{+}={\Phi_{+}}_{(3)}\left({G_{+}}_{(0)}X_{+}\right).
\end{align*}
Using the Borcherds identity (\ref{Borcherds identity}) with $m=3$, $k=0$, $n=0$, $a=G_{+}$, $b=X_{+}$, and $c=\Phi_{+}$, we obtain:
\begin{align*}
\binom{3}{0}\left({G_{+}}_{(0)}X_{+}\right)_{(3)}\Phi_{+}+\binom{3}{1}\left({G_{+}}_{(1)}X_{+}\right)_{(2)}\Phi_{+}\\
={G_{+}}_{(3)}\left({X_{+}}_{(0)}\Phi_{+}\right)+(-1){X_{+}}_{(0)}\left({G_{+}}_{(3)}\Phi_{+}\right).
\end{align*}
implying that ${M_{+}}_{(3)}\Phi_{+}=0$ and, by skewsymmetry (\ref{skewsymmetry}), that ${\Phi_{+}}_{(3)}M_{+}=0$.\\
Computing ${\Phi_{+}}_{(2)}M_{+}$:
\begin{align*}
{\Phi_{+}}_{(2)}M_{+}={\Phi_{+}}_{(2)}\left({G_{+}}_{(0)}X_{+}\right).
\end{align*}
Using the Borcherds identity (\ref{Borcherds identity}) with $m=2$, $k=0$, $n=0$, $a=G_{+}$, $b=X_{+}$, and $c=\Phi_{+}$, we obtain:
\small
\begin{align*}
\binom{2}{0}\left({G_{+}}_{(0)}X_{+}\right)_{(2)}\Phi_{+}+\binom{2}{1}\left({G_{+}}_{(1)}X_{+}\right)_{(1)}\Phi_{+}\\
={G_{+}}_{(2)}\left({X_{+}}_{(0)}\Phi_{+}\right)+(-1){X_{+}}_{(0)}\left({G_{+}}_{(2)}\Phi_{+}\right).
\end{align*}
\normalsize
Then ${M_{+}}_{(2)}\Phi_{+}=0$ and by skewsymmetry (\ref{skewsymmetry}) ${\Phi_{+}}_{(2)}M_{+}=0$.\\
Computing ${\Phi_{+}}_{(1)}M_{+}$:
\begin{align*}
{\Phi_{+}}_{(1)}M_{+}={\Phi_{+}}_{(1)}\left({G_{+}}_{(0)}X_{+}\right).
\end{align*}
Using the Borcherds identity (\ref{Borcherds identity}) with $m=1$, $k=0$, $n=0$, $a=G_{+}$, $b=X_{+}$, and $c=\Phi_{+}$, we obtain:
\begin{align*}
\left({G_{+}}_{(0)}X_{+}\right)_{(1)}\Phi_{+}+(-\tfrac{1}{2})K_{+}&={G_{+}}_{(1)}\left((-\tfrac{10}{2})\partial\Phi_{+}\right).
\end{align*}
Then ${M_{+}}_{(1)}\Phi_{+}=-\tfrac{9}{2}K_{+}$ and by skewsymmetry (\ref{skewsymmetry}) ${\Phi_{+}}_{(1)}M_{+}=\tfrac{9}{2}K_{+}$.\\
Computing ${\Phi_{+}}_{(0)}M_{+}$:\\
By the assumption (\ref{assumption of the relation}) we have
\begin{align*}
0=&4:G_{+}X_{+}:-2:\Phi_{+} K_{+}:-4\partial M_{+}-\partial^{2}G_{+}.
\end{align*}
Using quasi-commutativity (\ref{quasi-commutativity}) we have
\begin{align*}
:G_{+}X_{+}:-:X_{+}G_{+}:=\int_{-\partial}^{0}[{G_{+}}_{\lambda}X_{+}]d\lambda=\tfrac{1}{4}\partial^{2}G_{+}+\partial M_{+},
\end{align*}
and
\begin{align*}
:\Phi_{+} K_{+}:-:K_{+}\Phi_{+}:=\int_{-\partial}^{0}[{\Phi_{+}}_{\lambda}K_{+}]d\lambda=(-3)\partial M_{+}.
\end{align*}
Thus,
\small 
\begin{align*}
0=&4:X_{+}G_{+}:-2:K_{+}\Phi_{+}:+6\partial M_{+}.
\end{align*}
\normalsize
It follows that
\begin{align*}
[{\Phi_{+}}_{\lambda}\,4:X_{+}G_{+}:-2:K_{+}\Phi_{+}:+6\partial M_{+}]=0.
\end{align*}
The non-commutative Wick formula (\ref{Wickformula}) implies:
\small
\begin{align*}
[{\Phi_{+}}_{\lambda}4:X_{+}G_{+}:]=&4:[{\Phi_{+}}_{\lambda}X_{+}]G_{+}:+4:X_{+}[{\Phi_{+}}_{\lambda}G_{+}]:\\
&+4\int_{0}^{\lambda}\left[[{\Phi_{+}}_{\lambda}X_{+}]_{\mu}G_{+}\right]d\mu\\
=&(-25)K_{+}\lambda^{2}+(-30):\Phi_{+} G_{+}:\lambda+(-10):\partial\Phi_{+} G_{+}:\\
&+4:X_{+}K_{+}:.
\end{align*}
\normalsize
We already know that $[{\Phi_{+}}_{\lambda}M_{+}]=\tfrac{9}{2}K_{+}\lambda+{\Phi_{+}}_{(0)}M_{+}$.
The non-commutative Wick formula (\ref{Wickformula}) then gives:
\small
\begin{align*}
[{\Phi_{+}}_{\lambda}(-2):K_{+}\Phi_{+}:]=&(-2)\left(:[{\Phi_{+}}_{\lambda}K_{+}]\Phi_{+}:+:K_{+}[{\Phi_{+}}_{\lambda}\Phi_{+}]:\right.\\
&\left.+\int_{0}^{\lambda}\left[[{\Phi_{+}}_{\lambda}K_{+}]_{\mu}\Phi_{+}\right]d\mu\right)\\
=&(-2)K_{+}\lambda^{2}+\left(6{\Phi_{+}}_{(0)}M_{+}+6:G_{+}\Phi_{+}:+(-27)\partial K_{+}\right)\lambda\\
&+6:M_{+}\Phi_{+}:+3:\partial G_{+}\Phi_{+}:+(-12):K_{+}X_{+}:.
\end{align*}
\normalsize
By sesquilinearity (\ref{sesquilinearity}), we have
\begin{align*}
[{\Phi_{+}}_{\lambda}6\partial M_{+}]=27K_{+}\lambda^{2}+\left(6{\Phi_{+}}_{(0)}M_{+}+27\partial K_{+}\right)\lambda+6\partial\left({\Phi_{+}}_{(0)}M_{+}\right).
\end{align*}
Looking at the coefficient of $\lambda$ in $[{\Phi_{+}}_{\lambda}\,4:X_{+}G_{+}:-2:K_{+}\Phi_{+}:+6\partial M_{+}]=0$
and using that $:\Phi_{+}G_{+}:+:G_{+}\Phi_{+}:=\partial K_{+}$ (by quasi-commutativity (\ref{quasi-commutativity})), we obtain
\begin{align*}
{\Phi_{+}}_{(0)}M_{+}=(-3):G_{+}\Phi_{+}:+\tfrac{5}{2}\partial K_{+},
\end{align*}
from where we conclude that $[{\Phi_{+}}_{\lambda}M_{+}]=\tfrac{9}{2}K_{+}\lambda-\left(3:G_{+}\Phi_{+}:-\tfrac{5}{2}\partial K_{+}\right)$.

\item[2)]$[{K_{+}}_{\lambda}M_{+}]$\\
Using that $D$ is an odd derivation we get:
\begin{align*}
D\left({\Phi_{+}}_{(n)}M_{+}\right)={K_{+}}_{(n)}M_{+}-{\Phi_{+}}_{(n)}\partial X_{+}.
\end{align*}
In others words, ${K_{+}}_{(n)}M_{+}=D\left({\Phi_{+}}_{(n)}M_{+}\right)+{\Phi_{+}}_{(n)}\partial X_{+}$ and, we can compute $[{K_{+}}_{\lambda}M_{+}]$ in terms of already known data. We conclude that:
\begin{align*}
[{K_{+}}_{\lambda}M_{+}]=-\frac{15}{2}\Phi_{+}\lambda^{2}-\frac{11}{2}\partial\Phi_{+}\lambda+3\left(:G_{+}K_{+}:-2:L_{+}\Phi_{+}:\right).
\end{align*}
\item[3)]$[{X_{+}}_{\lambda}K_{+}]$\\
Using that $D$ is an odd derivation we have:
\begin{align*}
D\left({K_{+}}_{(n)}M_{+}\right)={\partial\Phi_{+}}_{(n)}M_{+}+{K_{+}}_{(n)}\partial X_{+}.
\end{align*}
Then ${K_{+}}_{(n)}\partial X_{+}=D\left({K_{+}}_{(n)}M_{+}\right)-{\partial\Phi_{+}}_{(n)}M_{+}$, that is, we can compute $[{K_{+}}_{\lambda}X_{+}]$ in terms of already known data. We conclude that:
\begin{align*}
[{X_{+}}_{\lambda}K_{+}]=-3K_{+}\lambda+3\left(:G_{+}\Phi_{+}:-\partial K_{+}\right).
\end{align*}

\item[4)]$[{X_{+}}_{\lambda}M_{+}]$\\
We have $[{M_{+}}_{\lambda}X_{+}]=[{M_{+}}_{\lambda}\tfrac{1}{6}[{\Phi_{+}}_{\mu}\Phi_{+}]]$. Using the Jacobi identity (\ref{Jacobi identity}) we can express 
\begin{align*}
[{M_{+}}_{\lambda}[{\Phi_{+}}_{\mu}\Phi_{+}]]=(-1)[{\Phi_{+}}_{\mu}[{M_{+}}_{\lambda}\Phi_{+}]]+[[{M_{+}}_{\lambda}\Phi_{+}]_{\lambda+\mu}\Phi_{+}].
\end{align*}
Putting $\mu=0$ in the above equation we can compute $[{M_{+}}_{\lambda}X_{+}]$ in terms of already known data. We obtain:
\small
\begin{align*}
[{X_{+}}_{\lambda}M_{+}]=-\frac{9}{4}G_{+}\lambda^{2}-\left(5M_{+}+\frac{9}{4}\partial G_{+}\right)\lambda+\left(4:G_{+}X_{+}:-\frac{7}{2}\partial M_{+}-\frac{3}{4}\partial^{2}G_{+}\right).
\end{align*}
\normalsize

\item[5)]$[{M_{+}}_{\lambda}M_{+}]$\\
Using that $D$ is an odd derivation we have:
\begin{align*}
D\left({X_{+}}_{(n)}M_{+}\right)={M_{+}}_{(n)}M_{+}+{X_{+}}_{(n)}\partial X_{+}.
\end{align*}
Then ${M_{+}}_{(n)}M_{+}=D\left({X_{+}}_{(n)}M_{+}\right)-{X_{+}}_{(n)}\partial X_{+}$, that is, we can compute $[{M_{+}}_{\lambda}M_{+}]$ in terms of already known data. We conclude that:
\begin{align*}
[{M_{+}}_{\lambda}M_{+}]=&-\frac{35}{24}\lambda^{4}+\frac{1}{2}\left(20X_{+}-9L_{+}\right)\lambda^{2}+\left(10\partial X_{+}-\frac{9}{2}\partial L_{+}\right)\lambda\\
\quad&+\left(\frac{3}{2}\partial^{2}X_{+}-\frac{3}{2}\partial^{2}L_{+}-4:G_{+}M_{+}:+8:L_{+}X_{+}:\right).
\end{align*}

\end{itemize}

\subsubsection{Checking the relation}\label{subs-checkingtherelation}
To conlude the proof of the Conjecture \ref{conjecture} we need to prove the relation (\ref{assumption of the relation})
\begin{align*}
0=4:G_{+}X_{+}:-2:\Phi_{+} K_{+}:-4\partial M_{+}-\partial^{2}G_{+}.
\end{align*}
\noindent
used in the computation of the non-linear $\lambda$-brackets $[{\Phi_{+}}_{\lambda}M_{+}]$. We check this relation among the fields performing again an explicit computation. This is a long calculation, even longer than the ones already performed in the proof of Theorem~\ref{maintheorem}. Besides the \ttfamily{Mathematica} \normalfont package OPEdefs \cite{Thielemans91}, we have used the computer algebra system \ttfamily{Cadabra} \normalfont \cite{Cadabra07}, this last software was proved very useful for simplifying tensorial expressions with many terms. Despite its length this is a straightforward computation.\\
For these reasons we do not present here the computations, they are show up online at the URL:\\ 
\nolinkurl{http://www.ime.unicamp.br/~lazarord/checking_the_relation.pdf}
\begin{remark}\label{rm-Pontryagin}
As in Theorem~\ref{maintheorem} we make extensive use of: the Ricci flatness, the contractions on page~\pageref{ContractionsThree-FourForm}, the fact that $\varphi$, $\psi$ and $g$ are parallel and the symmetries of the Riemann curvature tensor. Unlike the computations in Theorem~\ref{maintheorem}, we should point out that we need to use the identity (\ref{identityvarphi-wedge-varphi}), in fact this one appears many times and is a key identity. It is also interesting to note the appearance of the first Pontryagin class $p_{1}(M)=\tfrac{1}{8\pi^{2}}\operatorname{Tr}\left(R\wedge R\right)$ of the manifold, it appears as one of the coefficients of the term $\partial\gamma^{i}\partial\gamma^{j}\partial\gamma^{k}c^{l}$, i.e, $\tfrac{1}{8\pi^{2}}\tensor{R}{_i_j_m_n}\tensor{R}{_k_l^m^n}\partial\gamma^{i}\partial\gamma^{j}\partial\gamma^{k}c^{l}$, nevertheless this expression is identically zero because $R$ is antisymmetric in $i$ and $j$ while $\partial\gamma^{i}$ commutes with $\partial\gamma^{j}$. 
\end{remark}

\section{Computations}\label{app-computations}  
To perform all the computations below we are assuming that we are working in a local coordinate chart where the volume form is constant, then we can assume that: 
\begin{equation}\label{localcoordinateassumption}
\Gamma^{i}_{ij}=\frac{\partial\log\sqrt{\abs{g}}}{\partial x^{j}}=0,
\end{equation}
\noindent
where $\abs{g}$ denotes the absolute value of the determinant of the metric tensor $g$.\\
For convenience of the reader we recall the expression of the Riemann curvature tensor $\tensor{R}{^l_i_j_k}$ and the Ricci tensor $R_{ij}$ in terms of the  Christoffel symbols. We also recall some of its symmetries.

\begin{equation}\label{RiemanncurvatureChristoffel}
\tensor{R}{^l_i_j_k}={(\Gamma^{l}_{ik})}_{,j}-{(\Gamma^{l}_{ij})}_{,k}+\Gamma^{l}_{js}\Gamma^{s}_{ik}-\Gamma^{l}_{ks}\Gamma^{s}_{ij},
\end{equation}

\begin{equation}\label{symmetriesoftheRiemannTensor}
\tensor{R}{_{iklm}}=\tensor{R}{_{lmik}},\;\;\;\tensor{R}{_{iklm}}=-\tensor{R}{_{kilm}}=-\tensor{R}{_{ikml}},
\end{equation}

\begin{equation}\label{firstBianchiidentity}
\tensor{R}{_{iklm}}+\tensor{R}{_{imkl}}+\tensor{R}{_{ilmk}}=0\;\;\; \text{(first Bianchi identity).}
\end{equation}

\begin{equation}\label{RiccitensorChristoffel}
R_{ij}=\tensor{R}{^l_i_l_k}=g^{lm}R_{iljm}={(\Gamma^{l}_{ij})}_{,l}-{(\Gamma^{l}_{il})}_{,j}+\Gamma^{l}_{ij}\Gamma^{m}_{lm}-\Gamma^{m}_{il}\Gamma^{l}_{jm}.
\end{equation}
\noindent
The Ricci tensor is symmetric.

\subsection{\texorpdfstring{$[{\Phi_{+}}_{\lambda}\Phi_{+}]$}{}}\label{PhiwithPhi}

We have
\begin{equation}
\Phi_{+}=\tfrac{1}{6}\varphi_{ijk}e^{i}_{+}e^{j}_{+}e^{k}_{+} + \tfrac{1}{2}\varphi_{ijk}\Gamma^{i}_{mn}g^{jm}\partial \gamma^{n}e^{k}_{+}. \nonumber
\end{equation}
To simplify the notation we drop the plus subscript.
\begin{itemize}
\item[a)]Computing\;\;\; $[\tfrac{1}{6}\varphi_{ijk}{e^{i}e^{j}e^{k}}_{\lambda}\tfrac{1}{6}\varphi_{mnl}e^{m}e^{n}e^{l}]$

For this we first compute $[{e^{m}e^{n}e^{l}}_{\lambda}e^{i}e^{j}e^{k}]$ and we can assume anti--symmetrization in the indices $\{i,j,k\}$ and $\{m,n,l\}$ due to the future contraction with the three form $\varphi$.
\small
\begin{align*}
[{e^{m}e^{n}e^{l}}_{\lambda}e^{i}e^{j}e^{k}]=&(-3)g^{im}g^{jn}g^{kl}\lambda^{2}+\left(18 g^{im}g^{jn}e^{k}e^{l}-6\partial(g^{im}g^{jn}g^{kl})\right)\lambda\nonumber\\
&+9g^{im}e^{j}e^{k}e^{n}e^{l}-18g^{im}g^{jn}\partial(e^{l})e^{k}+36\partial(g^{im})g^{jn}e^{k}e^{l}\nonumber\\
&-18\partial(g^{im})\partial(g^{jn})g^{kl}.\nonumber
\end{align*}
\normalsize
Then using the non-commutative Wick formula (\ref{Wickformula}):
\small
\begin{align*}
[\frac{1}{6}\varphi_{ijk}{e^{i}e^{j}e^{k}}_{\lambda}\frac{1}{6}\varphi_{mnl}e^{m}e^{n}e^{l}]=&\frac{1}{36}:\varphi_{mnl}[{e^{m}e^{n}e^{l}}_{-\lambda-\partial}\;\varphi_{ijk}e^{i}e^{j}e^{k}]:\\
=&\frac{1}{36}:\varphi_{mnl}{\left(:\varphi_{ijk}[{e^{m}e^{n}e^{l}}_{\mu}e^{i}e^{j}e^{k}]:\right)}_{\rvert_{-\lambda-\partial}}:.
\end{align*}
\normalsize
Collecting terms first by the order of $\lambda$ and then by the number of factors $e$'s:\\
\fbox{$\mathbf{\lambda^{2}}$}
\begin{align*}
\frac{1}{36}(-3)\varphi_{mnl}\varphi_{ijk}g^{im}g^{jn}g^{kl}=\frac{1}{36}(-3)42=-\frac{7}{2}.
\end{align*}
Here was used the identity (\ref{dosPhitresg}).\\

\fbox{$\mathbf{\lambda}$}
\begin{align*}
&\quad\frac{1}{36}\left(-6\varphi_{mnl}\partial(\varphi_{ijk}g^{im}g^{jn}g^{kl})-18\varphi_{ijk}\varphi_{mnl}g^{im}g^{jn}e^{k}e^{l}\right.\\
&\quad+\left.6\varphi_{mnl}\varphi_{ijk}\partial(g^{im}g^{jn}g^{kl})\right)\\
&=-\frac{1}{6}\varphi_{mnl}\partial(\varphi_{ijk})g^{im}g^{jn}g^{kl}-\frac{1}{2}\varphi_{ijk}\varphi_{mnl}g^{im}g^{jn}e^{k}e^{l}\\
&=-\frac{1}{6}\varphi^{ijk}\partial(\varphi_{ijk})-3g_{kl}e^{k}e^{l}\\
&=0.
\end{align*}
Here was used the identity (\ref{dosPhidosg}) and that $g_{ij}e^{i}e^{j}=-g_{ij}e^{j}e^{i}+g_{ij}\partial(g^{ij})$ by quasi-commutativity (\ref{quasi-commutativity}), then

\begin{equation}\label{trick1}
g_{ij}e^{i}e^{j}=\frac{1}{2}g_{ij}\partial(g^{ij})=\frac{1}{2}g_{ij}\left(-\Gamma^{i}_{ab}g^{aj}-\Gamma^{j}_{ab}g^{ia}\right)\partial\gamma^{b}=-\Gamma^{i}_{ib}\partial\gamma^{b}.
\end{equation}

We also have by (\ref{Phicovariantlyconstant}) and (\ref{dosPhidosg}) that

\begin{equation}\label{trick2}
\varphi^{ijk}\partial(\varphi_{ijk})=\varphi^{ijk}\left(\Gamma^{a}_{ib}\varphi_{ajk}+\Gamma^{a}_{jb}\varphi_{iak}+\Gamma^{a}_{kb}\varphi_{ija}\right)\partial\gamma^{b}=18\Gamma^{i}_{ib}\partial\gamma^{b}.
\end{equation}
\fbox{without $\lambda$}\\
\hspace{0.2in}\fbox{$e^{i}e^{j}e^{k}e^{l}$}
\small
\begin{align*}
\frac{9}{36}\varphi_{ijk}\varphi_{mnl}g^{im}e^{j}e^{k}e^{n}e^{l}=&\frac{1}{4}\left(-\psi_{jknl}+g_{jn}g_{kl}-g_{jl}g_{kn}\right)e^{j}e^{k}e^{n}e^{l}\nonumber\\
=&-\frac{1}{4}\psi_{ijkl}e^{i}e^{j}e^{k}e^{l}+\frac{1}{8}g_{ij}g_{kl}\partial(g^{ik})\partial(g^{jl})\nonumber\\
&-\frac{1}{8}g_{ij}g_{kl}\partial(g^{ij})\partial(g^{kl})\\
=&-\frac{1}{4}\psi_{ijkl}e^{i}e^{j}e^{k}e^{l}+\frac{1}{4}\Gamma^{c}_{ib}\Gamma^{i}_{cd}\partial\gamma^{b}\partial\gamma^{d}\nonumber\\
&+\frac{1}{4}g_{jm}g^{ic}\Gamma^{m}_{ib}\Gamma^{j}_{cd}\partial\gamma^{b}\partial\gamma^{d}-\frac{1}{8}g_{ij}g_{kl}\partial(g^{ij})\partial(g^{kl}).\nonumber
\end{align*}
\normalsize
Here was used the identity (\ref{dosPhiunag}), quasi-associativity (\ref{quasi-associativity}), the equality $g_{ij}e^{i}e^{j}=\frac{1}{2}g_{ij}\partial(g^{ij})$ proved above and that metric $g$ is covariantly constant.

\hspace{0.2in}\fbox{$\partial(e^{i})e^{j}$}
\begin{eqnarray}
&&\frac{1}{36}\left(-18\varphi_{mnl}\varphi_{ijk}g^{im}g^{jn}\partial(e^{k})e^{l}-18\varphi_{mnl}\varphi_{ijk}g^{im}g^{jn}e^{k}\partial(e^{l})\right.\nonumber\\
&&\left.-18\varphi_{mnl}\varphi_{ijk}g^{im}g^{jn}\partial(e^{l})e^{k}\right)\nonumber\\
&=&-3g_{lk}\partial(e^{l})e^{k}-\frac{3}{2}g_{lk}\partial^{2}g^{lk}.\nonumber
\end{eqnarray}
Here we used the identity (\ref{dosPhidosg}) and quasi-commutativity (\ref{quasi-commutativity}):
$$e^{i}\partial(e^{j})=-\partial(e^{j})e^{i}+\int_{-\partial}^{0}[{e^{i}}_{\lambda}\partial e^{j}]d\lambda=-\partial(e^{j})e^{i}+\frac{1}{2}\partial^{2}(g^{ij}).$$
\hspace{0.2in}\fbox{$e^{i}e^{j}$}
\small
\begin{align*}
&\quad-\frac{1}{2}\varphi_{mnl}\partial(\varphi_{ijk}g^{im}g^{jn})e^{k}e^{l}+\varphi_{mnl}\varphi_{ijk}\partial(g^{im})g^{jn}e^{k}e^{l}\\
&=-\frac{1}{2}\varphi_{mnl}\partial(\varphi_{ijk})g^{im}g^{jn}e^{k}e^{l}\\
&=-\frac{1}{2}\varphi_{mnl}\varphi_{ajk}\Gamma^{a}_{ib}g^{im}g^{jn}\partial\gamma^{b}e^{k}e^{l}-\frac{1}{2}\varphi_{mnl}\varphi_{iak}\Gamma^{a}_{jb}g^{im}g^{jn}\partial\gamma^{b}e^{k}e^{l}\\
&\quad-\frac{1}{2}\varphi_{mnl}\varphi_{ija}\Gamma^{a}_{kb}g^{im}g^{jn}\partial\gamma^{b}e^{k}e^{l}\\
&=-\varphi_{mnl}\varphi_{ajk}\Gamma^{a}_{ib}g^{im}g^{jn}\partial\gamma^{b}e^{k}e^{l}-\frac{1}{2}\varphi_{mnl}\varphi_{ija}\Gamma^{a}_{kb}g^{im}g^{jn}\partial\gamma^{b}e^{k}e^{l}\\
&=(-1)\left(g_{am}g_{lk}-g_{mk}g_{la}-\psi_{mlak}\right)\Gamma^{a}_{ib}g^{im}\partial\gamma^{b}e^{k}e^{l}+(-\frac{1}{2})6g_{la}\Gamma^{a}_{kb}\partial\gamma^{b}e^{k}e^{l}\\
&=\psi_{mlak}\Gamma^{a}_{ib}g^{im}\partial\gamma^{b}e^{k}e^{l}+(-1)g_{lk}\Gamma^{i}_{ib}\partial\gamma^{b}e^{k}e^{l}+(-2)g_{la}\Gamma^{a}_{kb}\partial\gamma^{b}e^{k}e^{l},\\
&=\psi_{mlak}\Gamma^{a}_{ib}g^{im}\partial\gamma^{b}e^{k}e^{l}+(-2)g_{la}\Gamma^{a}_{kb}\partial\gamma^{b}e^{k}e^{l}+(-\frac{1}{2})g_{lk}\partial(g^{lk})\Gamma^{i}_{ib}\partial\gamma^{b},\\
&=\psi_{mlak}\Gamma^{a}_{ib}g^{im}\partial\gamma^{b}e^{k}e^{l}+(-2)g_{la}\Gamma^{a}_{kb}\partial\gamma^{b}e^{k}e^{l}+\frac{1}{4}g_{ij}\partial(g^{ij})g_{lk}\partial(g^{lk}).
\end{align*}
\normalsize

Here was used that $d\varphi=0$, i.e., (\ref{Phicovariantlyconstant}), the identities (\ref{dosPhiunag}) and (\ref{dosPhidosg}), and (\ref{trick1}).

\hspace{0.2in}\fbox{without $e$'s}
\small
\begin{align*}
&-\frac{1}{12}\varphi_{mnl}\partial^{2}\left(\varphi_{ijk}g^{im}g^{jn}g^{kl}\right)-\frac{1}{2}\varphi_{mnl}\varphi_{ijk}\partial(g^{im})\partial(g^{jn})g^{kl}\\
&+\frac{1}{6}\varphi_{mnl}\partial\left(\varphi_{ijk}\partial\left(g^{im}g^{jn}g^{kl}\right)\right)\\
=&-\frac{1}{12}\varphi_{mnl}\partial^{2}(\varphi_{ijk})g^{im}g^{jn}g^{kl}+\frac{3}{2}g_{lk}\partial^{2}(g^{lk})\\
=&-\frac{1}{12}\varphi_{ijk}\partial^{2}(\varphi_{ijk})+\frac{3}{2}g_{lk}\partial^{2}(g^{lk})\\
=&\frac{9}{12}\partial\left(g_{ij}\partial(g^{ij})\right)+\frac{1}{12}\partial(\varphi^{ijk})\partial(\varphi_{ijk})+\frac{3}{2}g_{lk}\partial^{2}(g^{lk})\\
=&\frac{3}{4}\partial(g_{ij})\partial(g^{ij})+\frac{1}{12}\partial(\varphi^{ijk})\partial(\varphi_{ijk})+\frac{9}{4}g_{lk}\partial^{2}(g^{lk})\\
=&-\frac{3}{2}\Gamma^{c}_{bi}\Gamma^{i}_{cd}\partial\gamma^{b}\partial\gamma^{d}-\frac{3}{2}g_{aj}g^{ic}\Gamma^{a}_{bi}\Gamma^{j}_{cd}\partial\gamma^{b}\partial\gamma^{d}-\Gamma^{c}_{bi}\Gamma^{i}_{cd}\partial\gamma^{b}\partial\gamma^{d}\\
&-\frac{1}{2}\Gamma^{i}_{bi}\Gamma^{j}_{jd}\partial\gamma^{b}\partial\gamma^{d}-\frac{1}{2}\psi_{amjn}\Gamma^{a}_{bi}g^{mi}\Gamma^{i}_{cd}g^{nc}\partial\gamma^{b}\partial\gamma^{d}+\frac{9}{4}g_{lk}\partial^{2}(g^{lk})\\
=&-\frac{5}{2}\Gamma^{c}_{bi}\Gamma^{i}_{cd}\partial\gamma^{b}\partial\gamma^{d}-\frac{3}{2}g_{aj}g^{ic}\Gamma^{a}_{bi}\Gamma^{j}_{cd}\partial\gamma^{b}\partial\gamma^{d}-\frac{1}{2}\psi_{amjn}\Gamma^{a}_{bi}g^{mi}\Gamma^{j}_{cd}g^{nc}\partial\gamma^{b}\partial\gamma^{d}\\
=&-\frac{1}{8}g_{ij}\partial(g^{ij})g_{lk}\partial(g^{lk})+\frac{9}{4}g_{lk}\partial^{2}g^{lk}.
\end{align*}
\normalsize
In this chain of equalities was used that 
$$\partial\left(\varphi^{ijk}\partial(\varphi_{ijk})\right)=\partial(\varphi^{ijk})\partial(\varphi_{ijk})+\varphi^{ijk}\partial^{2}(\varphi_{ijk})$$
\noindent
and $\varphi^{ijk}\partial(\varphi_{ijk})=9g^{ij}\partial(g_{ij})$; this last identity is proved combining (\ref{trick1}) and (\ref{trick2}). We also used that $d\varphi=0$ i.e., (\ref{Phicovariantlyconstant}), that $g$ is covariantly constant and the identity (\ref{dosPhiunag}). 

\item[b)] Computing\\
\small $[\frac{1}{6}{\varphi_{ijk}e^{i}e^{j}e^{k}}_{\lambda}\frac{1}{2}\varphi_{mnl}\Gamma^{m}_{ab}g^{na}\partial\gamma^{b}e^{l}]+[\frac{1}{2}{\varphi_{ijk}\Gamma^{i}_{ab}g^{ja}\partial\gamma^{b}e^{k}}_{\lambda}\frac{1}{6}\varphi_{mnl}e^{m}e^{n}e^{l}]$. \normalsize
\begin{equation}
[\frac{1}{6}{\varphi_{ijk}e^{i}e^{j}e^{k}}_{\lambda}\frac{1}{2}\varphi_{mnl}\Gamma^{m}_{ab}g^{na}\partial\gamma^{b}e^{l}]=\frac{1}{4}\varphi_{ijk}\varphi_{mnl}g^{il}g^{na}\Gamma^{m}_{ab}\partial\gamma^{b}e^{j}e^{k}.\nonumber
\end{equation}
Using skew-symmetry (\ref{skewsymmetry}) and arranging the indices we have:
\begin{equation}
[\frac{1}{2}{\varphi_{ijk}\Gamma^{i}_{ab}g^{ja}\partial\gamma^{b}e^{k}}_{\lambda}\frac{1}{6}\varphi_{mnl}e^{m}e^{n}e^{l}]=[\frac{1}{6}{\varphi_{ijk}e^{i}e^{j}e^{k}}_{\lambda}\frac{1}{2}\varphi_{mnl}\Gamma^{m}_{ab}g^{na}\partial\gamma^{b}e^{l}],\nonumber
\end{equation}
then 
\begin{align*}
&[\frac{1}{6}{\varphi_{ijk}e^{i}e^{j}e^{k}}_{\lambda}\frac{1}{2}\varphi_{mnl}\Gamma^{m}_{ab}g^{na}\partial\gamma^{b}e^{l}]+[\frac{1}{2}{\varphi_{ijk}\Gamma^{i}_{ab}g^{ja}\partial\gamma^{b}e^{k}}_{\lambda}\frac{1}{6}\varphi_{mnl}e^{m}e^{n}e^{l}]\nonumber\\
=&\frac{1}{2}\varphi_{ijk}\varphi_{mnl}g^{il}g^{na}\Gamma^{m}_{ab}\partial\gamma^{b}e^{j}e^{k}\nonumber\\
=&\frac{1}{2}\left(g_{jm}g_{kn}-g_{jn}g_{km}-\psi_{jkmn}\right)g^{na}\Gamma^{m}_{ab}\partial\gamma^{b}e^{j}e^{k}\nonumber\\
=&\frac{1}{2}g_{jm}\Gamma^{m}_{kb}\partial\gamma^{b}e^{j}e^{k}+(-\frac{1}{2})g_{km}\Gamma^{m}_{jb}\partial\gamma^{b}e^{j}e^{k}+(-\frac{1}{2})\psi_{jkmn}g^{na}\Gamma^{m}_{ab}\partial\gamma^{b}e^{j}e^{k}\nonumber\\
=&(-\frac{1}{2})\psi_{jkmn}g^{na}\Gamma^{m}_{ab}\partial\gamma^{b}e^{j}e^{k}+(-1)g_{km}\Gamma^{m}_{jb}\partial\gamma^{b}e^{j}e^{k}+\frac{1}{2}g_{jm}\partial(g^{jk})\Gamma^{m}_{kb}\partial\gamma^{b}\nonumber\\
=&(-\frac{1}{2})\psi_{jkmn}g^{na}\Gamma^{m}_{ab}\partial\gamma^{b}e^{j}e^{k}+(-1)g_{km}\Gamma^{m}_{jb}\partial\gamma^{b}e^{j}e^{k}\nonumber\\
&+(-\frac{1}{2})g_{jm}g^{ck}\Gamma^{j}_{cd}\Gamma^{m}_{kd}\partial\gamma^{b}\partial\gamma^{d}+(-\frac{1}{2})\Gamma^{k}_{cd}\Gamma^{c}_{kb}\partial\gamma^{b}\partial\gamma^{d}.\nonumber
\end{align*}
Here were used the identity (\ref{dosPhiunag}), (\ref{trick1}) and that $g$ is covariantly constant.

\item[c)] Computing\;\;\; $[\frac{1}{2}{\varphi_{ijk}\Gamma^{i}_{rs}g^{jr}\partial\gamma^{s}e^{k}}_{\lambda}\frac{1}{2}\varphi_{mnl}\Gamma^{m}_{ab}g^{na}\partial\gamma^{b}e^{l}]$.
\begin{align*}
&[\frac{1}{2}{\varphi_{ijk}\Gamma^{i}_{rs}g^{jr}\partial\gamma^{s}e^{k}}_{\lambda}\frac{1}{2}\varphi_{mnl}\Gamma^{m}_{ab}g^{na}\partial\gamma^{b}e^{l}]\\
&=\frac{1}{4}\varphi_{mnl}\varphi_{ijk}g^{lk}\Gamma^{m}_{ab}g^{na}\partial\gamma^{b}\Gamma^{i}_{rs}g^{jr}\partial\gamma^{s}\\
&=\frac{1}{4}\left(g_{mi}g_{nj}-g_{mj}g_{ni}-\psi_{mnij}\right)\Gamma^{m}_{ab}g^{na}\partial\gamma^{b}\Gamma^{i}_{rs}g^{jr}\partial\gamma^{s}\\
&=-\frac{1}{4}\psi_{mnij}\Gamma^{m}_{ab}g^{na}\partial\gamma^{b}\Gamma^{i}_{rs}g^{jr}\partial\gamma^{s}+\frac{1}{4}g_{mi}g^{na}\Gamma^{m}_{ab}\Gamma^{i}_{ns}\partial\gamma^{b}\partial\gamma^{s}\\
&\quad-\frac{1}{4}\Gamma^{m}_{ib}\Gamma^{i}_{ms}\partial\gamma^{b}\partial\gamma^{s}.
\end{align*}
\end{itemize}
Finally we combining the result in $a),b)$ and $c)$ to obtain 
\begin{equation}
[{\Phi_{+}}_{\lambda}\Phi_{+}]=(-\tfrac{7}{2}) \lambda^{2}+6X_{+},
\end{equation}
where $X_{+}$ is:
\begin{eqnarray}
X_{+}&=&-\tfrac{1}{24}\psi_{ijkl}e^{i}_{+}e^{j}_{+}e^{k}_{+}e^{l}_{+}-\tfrac{1}{4}\psi_{ijkl}\Gamma^{i}_{mn}g^{jm}\partial\gamma^{n}e^{k}_{+}e^{l}_{+}\nonumber\\
&&-\tfrac{1}{8}\psi_{ijkl}\Gamma^{i}_{m_{1}n_{1}}g^{jm_{1}}\partial\gamma^{n_{1}}\Gamma^{k}_{m_{2}n_{2}}g^{lm_{2}}\partial\gamma^{n_{2}}-\tfrac{1}{2}g_{ij}\partial(e^{i}_{+})e^{j}_{+}\nonumber\\
&&-\tfrac{1}{2}g_{ij}\Gamma^{j}_{kl}\partial\gamma^{k}e^{l}_{+}e^{i}_{+}-\tfrac{1}{4}\Gamma^{i}_{jk}\Gamma^{k}_{il}\partial\gamma^{j}\partial\gamma^{l},\nonumber
\end{eqnarray}
\begin{eqnarray}
X_{-}&=&-\tfrac{1}{24}\psi_{ijkl}e^{i}_{-}e^{j}_{-}e^{k}_{-}e^{l}_{-}+\tfrac{1}{4}\psi_{ijkl}\Gamma^{i}_{mn}g^{jm}\partial\gamma^{n}e^{k}_{-}e^{l}_{-}\nonumber\\
&&-\tfrac{1}{8}\psi_{ijkl}\Gamma^{i}_{m_{1}n_{1}}g^{jm_{1}}\partial\gamma^{n_{1}}\Gamma^{k}_{m_{2}n_{2}}g^{lm_{2}}\partial\gamma^{n_{2}}+\tfrac{1}{2}g_{ij}\partial(e^{i}_{-})e^{j}_{-}\nonumber\\
&&+\tfrac{1}{2}g_{ij}\Gamma^{j}_{kl}\partial\gamma^{k}e^{l}_{-}e^{i}_{-}-\tfrac{1}{4}\Gamma^{i}_{jk}\Gamma^{k}_{il}\partial\gamma^{j}\partial\gamma^{l}.\nonumber
\end{eqnarray}
To get this precise form of $X_{+}$ we only need to manipulate a little more the terms without $e$'s that come from $a),b)$ and $c)$:
\small
\begin{align*}
&-\tfrac{3}{4}\psi_{amjn}\Gamma^{a}_{bi}g^{mi}\Gamma^{j}_{cd}g^{nc}\partial\gamma^{b}\partial\gamma^{d}-3\Gamma^{c}_{bi}\Gamma^{i}_{cd}\partial\gamma^{b}\partial\gamma^{d}-\tfrac{3}{2}g_{aj}g^{ic}\Gamma^{a}_{bi}\Gamma^{j}_{cd}\partial\gamma^{b}\partial\gamma^{d}\\
&+\tfrac{3}{4}g_{lk}\partial^{2}g^{lk}\nonumber\\
=&-\tfrac{3}{4}\psi_{amjn}\Gamma^{a}_{bi}g^{mi}\Gamma^{j}_{cd}g^{nc}\partial\gamma^{b}\partial\gamma^{d}-\frac{3}{2}\Gamma^{c}_{bi}\Gamma^{i}_{cd}\partial\gamma^{b}\partial\gamma^{d}+\frac{3}{4}\partial\left(g_{ij}\partial(g^{ij})\right)\nonumber\\
=&-\tfrac{3}{4}\psi_{amjn}\Gamma^{a}_{bi}g^{mi}\Gamma^{j}_{cd}g^{nc}\partial\gamma^{b}\partial\gamma^{d}-\frac{3}{2}\Gamma^{c}_{bi}\Gamma^{i}_{cd}\partial\gamma^{b}\partial\gamma^{d}.\nonumber
\end{align*}
\normalsize 
The first equality is proved using:
\begin{align*}
\partial(g_{ij}\partial(g^{ij}))=&\partial(g_{ij})\partial(g^{ij})+g_{ij}\partial^{2}(g^{ij})\\
=&(-2)\Gamma^{c}_{bi}\Gamma^{i}_{cd}\partial\gamma^{b}\partial\gamma^{d}+(-2)g_{aj}g^{ic}\Gamma^{a}_{bi}\Gamma^{j}_{cd}\partial\gamma^{b}\partial\gamma^{d}+g_{ij}\partial^{2}(g^{ij}),
\end{align*}
\noindent
which itself is proved using that $g$ is covariantly constant. The second equality follows from (\ref{trick1}) and (\ref{localcoordinateassumption}).

\subsection{\texorpdfstring{${X_{+}}_{(2)}\Phi_{+}$}{} and \texorpdfstring{${X_{+}}_{(1)}\Phi_{+}$}{}}\label{XwithPhi}

To simplify the notation we drop the ``+'' subscript.\\
Below we only compute the $\lambda$-bracket between the sumands of $X$ and $\Phi$ contributing to ${X}_{(2)}\Phi$ and ${X}_{(1)}\Phi$.
\begin{itemize}
\item[a)] Computing\;\;\; $[-\frac{1}{24}{\psi_{abcd}e^{a}e^{b}e^{c}e^{d}}_{\lambda}\frac{1}{6}\varphi_{ijk}e^{i}e^{j}e^{k}]$.
\begin{align*}
&[-\frac{1}{24}{\psi_{abcd}e^{a}e^{b}e^{c}e^{d}}_{\lambda}\frac{1}{6}\varphi_{ijk}e^{i}e^{j}e^{k}]\nonumber\\
=&\left(-\frac{1}{6}\psi_{abcd}\varphi_{ijk}g^{ai}g^{bj}g^{ck}e^{d}\right)\lambda^{2}+\left(-\frac{1}{4}\psi_{abcd}\varphi_{ijk}g^{bi}g^{dj}e^{a}e^{c}e^{k}\right.\nonumber\\
&\left.-\frac{3}{8}\psi_{abcd}\varphi_{ijk}g^{bi}g^{cj}\partial(g^{dk})e^{a}+\frac{1}{6}\varphi_{ijk}\partial(\psi_{abcd}g^{bi}g^{cj}g^{dk}e^{a})\right)\lambda\\
&+\text{terms without} \; \lambda\nonumber\\
=&\left(-\varphi_{ack}e^{a}e^{c}e^{k}+\frac{1}{6}\varphi_{ijk}\partial(\psi_{abcd})g^{bi}g^{cj}g^{dk}e^{a}\right)\lambda+ \text{terms without} \; \lambda\nonumber\\
=&\left(-\varphi_{ijk}e^{i}e^{j}e^{k}+(-2)\varphi_{ijk}\Gamma^{i}_{lm}g^{lj}\partial\gamma^{m}e^{k}\right)\lambda+ \text{terms without} \; \lambda.\nonumber
\end{align*}
Here were used the identities (\ref{PhiPsitresg}) and (\ref{PhiPsidosg}), and that $d*\varphi=0$, i.e., (\ref{Psicovariantlyconstant}).

\item[b)] Computing\;\;\; $[-\frac{1}{4}{\psi_{abcd}\Gamma^{a}_{mn}g^{bm}\partial\gamma^{n}e^{c}e^{d}}_{\lambda}\frac{1}{6}\varphi_{ijk}e^{i}e^{j}e^{k}]$.
\begin{eqnarray}
&&[-\frac{1}{4}{\psi_{abcd}\Gamma^{a}_{mn}g^{bm}\partial\gamma^{n}e^{c}e^{d}}_{\lambda}\frac{1}{6}\varphi_{ijk}e^{i}e^{j}e^{k}]\nonumber\\
&=&\left(\frac{1}{4}\varphi_{ijk}\psi_{abcd}\Gamma^{a}_{mn}g^{bm}\partial\gamma^{n}g^{ci}g^{dj}e^{k}\right)\lambda + \text{terms without} \; \lambda\nonumber\\
&=&\left(-\varphi_{abk}\Gamma^{a}_{mn}g^{bm}\partial\gamma^{n}e^{k}\right)+ \text{terms without} \; \lambda.\nonumber
\end{eqnarray}
Here was used the identity (\ref{PhiPsidosg}).

\item[c)] Computing\;\;\; $[-\frac{1}{2}{g_{lb}\partial(e^{b})e^{l}}_{\lambda}\frac{1}{6}\varphi_{ijk}e^{i}e^{j}e^{k}]$.
\small
\begin{align*}
&[-\frac{1}{2}{g_{lb}\partial(e^{b})e^{l}}_{\lambda}\frac{1}{6}\varphi_{ijk}e^{i}e^{j}e^{k}]\nonumber\\
=&\left((-\frac{1}{2})\varphi_{ijk}g^{ij}e^{k}\right)\lambda^{2}+\left((-\frac{1}{4})\varphi_{ijk}e^{i}e^{j}e^{k}+(-\frac{1}{2})\varphi_{ijk}g_{lb}\partial(g^{bi}g^{kl}e^{j})\right.\nonumber\\
&\left.+\frac{1}{2}\varphi_{ijk}\partial(g^{ik}e^{j})\right)\lambda + \text{terms without} \; \lambda\nonumber\\
=&\left((-\frac{1}{4})\varphi_{ijk}e^{i}e^{j}e^{k}\right)\lambda+ \text{terms without} \; \lambda.\nonumber
\end{align*}
\normalsize
\item[d)] Computing\;\;\; $[-\frac{1}{2}{g_{lb}\partial(e^{b})e^{l}}_{\lambda}\frac{1}{2}\varphi_{ijk}\Gamma^{i}_{mn}g^{jm}\partial\gamma^{n}e^{k}]$.
\begin{eqnarray}
&&[-\frac{1}{2}{g_{lb}\partial(e^{b})e^{l}}_{\lambda}\frac{1}{2}\varphi_{ijk}\Gamma^{i}_{mn}g^{jm}\partial\gamma^{n}e^{k}]\nonumber\\
&=&\left(-\frac{1}{4}\varphi_{ijk}\Gamma^{i}_{mn}g^{jm}\partial\gamma^{n}e^{k}\right)\lambda + \text{terms without} \; \lambda.\nonumber
\end{eqnarray}

\item[e)] Computing\;\;\; $[-\frac{1}{2}{g_{am}\Gamma^{a}_{nb}\partial\gamma^{b}e^{n}e^{m}}_{\lambda}\frac{1}{6}\varphi_{ijk}e^{i}e^{j}e^{k}]$.
\begin{eqnarray}
&&[-\frac{1}{2}{g_{am}\Gamma^{a}_{nb}\partial\gamma^{b}e^{n}e^{m}}_{\lambda}\frac{1}{6}\varphi_{ijk}e^{i}e^{j}e^{k}]\nonumber\\
&=&\left(-\frac{1}{2}\varphi_{ijk}\Gamma^{i}_{nb}g^{nj}\partial\gamma^{b}e^{k}\right)\lambda + \text{terms without} \; \lambda.\nonumber
\end{eqnarray}

\end{itemize}

\noindent
Combining $a),b),c),d)$ and $e)$ we get that ${X_{+}}_{(2)}\Phi_{+}=0$ and ${X_{+}}_{(1)}\Phi_{+}=-\frac{15}{2}\Phi_{+}$.

\subsection{\texorpdfstring{$[{\Phi_{\pm}}_{\lambda}K_{\pm}]$}{}}\label{Phi-lambda-K}

To perform this computation we express $\Phi_{\pm}$ explicitly in terms of the $bc-\beta\gamma$ system.
\begin{align*}
\Phi_{+}=&\tfrac{1}{12\sqrt{2}}\varphi_{ijk} c^{i}c^{j}c^{k}+ \tfrac{1}{4\sqrt{2}}\varphi_{ijk}g^{il}c^{j}c^{k}b_{l} + \tfrac{1}{4\sqrt{2}}\varphi_{ijk} g^{il}g^{jm}c^{k} b_{l}b_{m}\nonumber\\
&+ \tfrac{1}{12\sqrt{2}}\varphi_{ijk} g^{il}g^{jm}g^{kn}b_{l}b_{m}b_{n}+\tfrac{1}{2\sqrt{2}}\varphi_{ijk}\Gamma^{i}_{mn}g^{jm}\partial \gamma^{n}g^{kl}b_{l}\\
&+\tfrac{1}{2\sqrt{2}}\varphi_{ijk}\Gamma^{i}_{mn}g^{jm}\partial \gamma^{n}c^{k},\nonumber
\end{align*}
\begin{align*}
\Phi_{-}=&\tfrac{i}{12\sqrt{2}}\varphi_{ijk} c^{i}c^{j}c^{k}- \tfrac{i}{4\sqrt{2}}\varphi_{ijk}g^{il}c^{j}c^{k}b_{l}+\tfrac{i}{4\sqrt{2}}\varphi_{ijk} g^{il}g^{jm}c^{k} b_{l}b_{m}\nonumber\\
&-\tfrac{i}{12\sqrt{2}}\varphi_{ijk} g^{il}g^{jm}g^{kn}b_{l}b_{m}b_{n}+\tfrac{i}{2\sqrt{2}}\varphi_{ijk}\Gamma^{i}_{mn}g^{jm}\partial \gamma^{n}g^{kl}b_{l}\\
&-\tfrac{i}{2\sqrt{2}}\varphi_{ijk}\Gamma^{i}_{mn}g^{jm}\partial \gamma^{n}c^{k}.\nonumber
\end{align*}

As by definition $K_{\pm}:=G_{(0)}(\Phi_{\pm})$ we need to apply $G_{(0)}$ to each of the summands of $\Phi_{\pm}$. To simplify the notation we denote $G_{(0)}$ simply by $D$.

\begin{align*}
D(\varphi_{ijk} c^{i}c^{j}c^{k})=(\varphi_{ijk,l}c^{l}) c^{i}c^{j}c^{k} + 3 \varphi_{ijk}\partial \gamma^{i}c^{j}c^{k},\nonumber
\end{align*}
\begin{align*}
D(\varphi_{ijk}g^{il}c^{j}c^{k}b_{l})=&D(\tensor{\varphi}{^l_j_k}c^{j}c^{k}b_{l})\\
=&(\tensor{\varphi}{^l_j_k_{,m}}c^{m})c^{j}c^{k}b_{l} + 2 \tensor{\varphi}{^l_j_k}\partial \gamma^{j}c^{k}b_{l}+\tensor{\varphi}{^l_j_k}c^{j}c^{k}\beta_{l},\nonumber
\end{align*}
\begin{align*}
D(\varphi_{ijk} g^{il}g^{jm}c^{k} b_{l}b_{m})=&D(\tensor{\varphi}{^l^m_k}c^{k} b_{l}b_{m})\\
=&(\tensor{\varphi}{^l^m_k_{,n}}c^{n})c^{k}b_{l}b_{m}+\tensor{\varphi}{^l^m_k}\partial \gamma^{k}b_{l}b_{m}+2\tensor{\varphi}{^l^m_k}c^{k}b_{l}\beta_{m},\nonumber
\end{align*}
\begin{align*}
D(\varphi_{ijk} g^{il}g^{jm}g^{kn}b_{l}b_{m}b_{n})=&D(\tensor{\varphi}{^l^m^n}b_{l}b_{m}b_{n})\\
=&(\tensor{\varphi}{^l^m^n_{,a}}c^{a})b_{l}b_{m}b_{n}+3\tensor{\varphi}{^l^m^n}b_{l}b_{m}\beta_{n},\nonumber
\end{align*}
\begin{align*}
D(\varphi_{ijk}\Gamma^{i}_{mn}g^{jm}\partial \gamma^{n}g^{kl}b_{l})=&D(\varphi_{ijk}\Gamma^{i}_{mn}g^{jm}g^{kl}\partial \gamma^{n}b_{l})\\
=&(F^{l}_{n,a}c^{a})\partial \gamma^{n}b_{l}+ F^{l}_{n}\partial c^{n}b_{l}+ F^{l}_{n}\partial\gamma^{n}\beta_{l},\nonumber
\end{align*}
\noindent
where  $F^{l}_{n}:=\varphi_{ijk}\Gamma^{i}_{mn}g^{jm}g^{kl}$,
\begin{align*}
D(\varphi_{ijk}\Gamma^{i}_{mn}g^{jm}\partial \gamma^{n}c^{k})=(F_{kn,a}c^{a})\partial \gamma^{n}c^{k}+ F_{kn}\partial c^{n}c^{k}+ F_{kn}\partial \gamma^{n}\partial\gamma^{k},\nonumber
\end{align*}
\noindent
where  $F_{kn}:=\varphi_{ijk}\Gamma^{i}_{mn}g^{jm}$.\\
Now we collect the non-zero $\lambda$-bracket between the summands of $\Phi_{\pm}$ and the summands of $K_{\pm}$ that contains terms with $\lambda$. To compute this $\lambda$-brackets we used the $\mathtt{Mathematica}$ package \cite{Thielemans91}:
\begin{align*}
&[{\varphi_{ijk} c^{i}c^{j}c^{k}}_{\lambda} D(\varphi_{ijk} g^{il}g^{jm}c^{k} b_{l}b_{m})]\\
=&\left(6\varphi_{ijk}\tensor{\varphi}{^j^k_n_{,s}}c^{i}c^{n}c^{s}+6\varphi_{ijk}\tensor{\varphi}{^k^j_n}\partial\gamma^{n}c^{i}\right.\\
&\left.+6\tensor{\varphi}{^k^m_n}\varphi_{ijk,m}c^{i}c^{j}c^{n}\right)\lambda+\text{terms without $\lambda$},
\end{align*}
\begin{align*}
&[{\varphi_{ijk} c^{i}c^{j}c^{k}}_{\lambda} D(\varphi_{ijk} g^{il}g^{jm}g^{kn}b_{l}b_{m}b_{n})]\\
=&\left(6\varphi_{ijk}\tensor{\varphi}{^{ijk}_{,s}}c^{s}+18\varphi^{jkn}\varphi_{ijk,n}c^{i}\right)\frac{\lambda^{2}}{2}\\
&+\left((-18)\varphi_{ijk}\tensor{\varphi}{^{ljk}_{,s}}c^{i}c^{s}b_{l}+(-18)\varphi_{ijk}\partial\tensor{\varphi}{^{sjk}_{,s}}c^{i}\right.\\
&\left.+6\partial\varphi_{ijk}\tensor{\varphi}{^{ijk}_{,s}}c^{s}+(-18)\varphi_{ijk}\varphi^{jkn}c^{i}\beta_{n}\right.\\
&\left.+18\varphi^{lkn}\varphi_{ijk,n}c^{i}c^{j}b_{l}+18\varphi^{jkn}\varphi_{ijk,n}\partial c^{i}\right)\lambda\\
&+\text{terms without $\lambda$},
\end{align*}
\begin{align*}
&[{\varphi_{ijk}g^{il}c^{j}c^{k}b_{l}}_{\lambda} D(\varphi_{ijk}g^{il}c^{j}c^{k}b_{l})]\\
=&\left(4\tensor{\varphi}{^{l}_{mi}}\tensor{\varphi}{^{i}_{jk,l}}c^{j}c^{k}c^{m}+4\tensor{\varphi}{^{l}_{mi}}\tensor{\varphi}{^{i}_{jl,s}}c^{j}c^{m}c^{s}+(-4)\tensor{\varphi}{^{i}_{jk}}\tensor{\varphi}{^{k}_{in}}\partial\gamma^{j}c^{n}\right)\lambda\\
&+\text{terms without $\lambda$},
\end{align*}
\begin{align*}
&[{\varphi_{ijk}g^{il}c^{j}c^{k}b_{l}}_{\lambda} D(\varphi_{ijk} g^{il}g^{jm}c^{k} b_{l}b_{m})]\\
=&\left(2\tensor{\varphi}{^{l}_{ji}}\tensor{\varphi}{^{ij}_{k,l}}c^{k}+(-2)\tensor{\varphi}{^{l}_{ji}}\tensor{\varphi}{^{ij}_{l,s}}c^{s}+(-4)\tensor{\varphi}{^{ij}_{k}}\tensor{\varphi}{^{k}_{mi,j}}c^{m}\right)\frac{\lambda^{2}}{2}\\
&+\left((-4)\tensor{\varphi}{^{l}_{mj}}\tensor{\varphi}{^{ij}_{k,l}}c^{k}c^{m}b_{i}+(-4)\tensor{\varphi}{^{k}_{mj}}\tensor{\varphi}{^{ij}_{k,s}}c^{m}c^{s}b_{i}\right.\\
&\left.+(-4)\tensor{\varphi}{^{k}_{mj}}\partial(\tensor{\varphi}{^{ij}_{k,i}})c^{m}+(-2)\tensor{\varphi}{^{l}_{ji}}\tensor{\varphi}{^{ij}_{k,s}}c^{k}c^{s}b_{l}\right.\\
&\left.+2\partial(\tensor{\varphi}{^{l}_{ji}})\tensor{\varphi}{^{ij}_{k,l}}c^{k}+(-2)\partial(\tensor{\varphi}{^{k}_{ji}})\tensor{\varphi}{^{ij}_{k,s}}c^{s}+2\tensor{\varphi}{^{ij}_{k}}\tensor{\varphi}{^{l}_{ji}}\partial\gamma^{k}b_{l}\right.\\
&\left.+4\tensor{\varphi}{^{ij}_{k}}\tensor{\varphi}{^{k}_{mi}}c^{m}\beta_{j}+(-4)\tensor{\varphi}{^{ij}_{k}}\tensor{\varphi}{^{l}_{mi,j}}c^{k}c^{m}b_{l}\right.\\
&\left.+(-2)\tensor{\varphi}{^{ij}_{k}}\tensor{\varphi}{^{k}_{mn,j}}c^{m}c^{n}b_{i}+(-4)\tensor{\varphi}{^{ij}_{k}}\tensor{\varphi}{^{k}_{mi,j}}\partial c^{m}\right)\lambda\\
&+\text{terms without $\lambda$},
\end{align*}
\begin{align*}
&[{\varphi_{ijk}g^{il}c^{j}c^{k}b_{l}}_{\lambda}D(\varphi_{ijk} g^{il}g^{jm}g^{kn}b_{l}b_{m}b_{n})]\\
=&\left(6\tensor{\varphi}{^{l}_{mk}}\tensor{\varphi}{^{ijk}_{,l}}c^{m}b_{i}b_{j}+6\tensor{\varphi}{^{l}_{kj}}\tensor{\varphi}{^{ijk}_{,s}}c^{s}b_{i}b_{l}+6\tensor{\varphi}{^{l}_{kj}}\partial(\tensor{\varphi}{^{ijk}_{,i}})b_{l}\right.\\
&\left.+6\partial(\tensor{\varphi}{^{l}_{kj}})\tensor{\varphi}{^{ijk}_{,l}}b_{i}+(-6)\tensor{\varphi}{^{ijk}}\tensor{\varphi}{^{l}_{ij}}b_{l}\beta_{k}\right.\\
&\left.+12\varphi^{ijk}\tensor{\varphi}{^{l}_{mj,k}}c^{m}b_{i}b_{l}+6\tensor{\varphi}{^{ijk}}\tensor{\varphi}{^{l}_{ij,k}}\partial b_{l}\right)\lambda\\
&+\text{terms without $\lambda$},
\end{align*}
\begin{align*}
&[{\varphi_{ijk}g^{il}c^{j}c^{k}b_{l}}_{\lambda}D(\varphi_{ijk}\Gamma^{i}_{mn}g^{jm}\partial \gamma^{n}g^{kl}b_{l})]\\
=&\left((-2)F^{i}_{j}\tensor{\varphi}{^{j}_{in}}c^{n}\right)\frac{\lambda^{2}}{2} + \left( (-2)\tensor{\varphi}{^{k}_{in}}F^{i}_{j,k}\partial \gamma^{j}c^{n}\right.\\
&\left.+ F^{i}_{j}\tensor{\varphi}{^{j}_{mn}}c^{m}c^{n}b_{i}+ (-2)F^{i}_{j}\tensor{\varphi}{^{j}_{in}}\partial c^{n} + (-2)F^{i}_{j}\partial \tensor{\varphi}{^{j}_{in}}c^{n}\right)\lambda\\
&+\text{terms without $\lambda$},
\end{align*}
\begin{align*}
&[{\varphi_{ijk}g^{il}c^{j}c^{k}b_{l}}_{\lambda}D(\varphi_{ijk}\Gamma^{i}_{mn}g^{jm}\partial \gamma^{n}c^{k})]\\
=&\left(F_{ji}\tensor{\varphi}{^{i}_{mn}}c^{j}c^{m}c^{n}\right)\lambda + \text{terms without $\lambda$},
\end{align*}
\begin{align*}
&[{\varphi_{ijk} g^{il}g^{jm}c^{k} b_{l}b_{m}}_{\lambda}D(\varphi_{ijk} c^{i}c^{j}c^{k})]\\
=&\left(-6\tensor{\varphi}{^{ij}_{k}}\varphi_{lmj,i}c^{k}c^{l}c^{m}-6\tensor{\varphi}{^{ij}_{k}}\varphi_{lji,s}c^{k}c^{l}c^{s}+6\tensor{\varphi}{^{ij}_{k}}\varphi_{lji}\partial\gamma^{l}c^{k}\right)\lambda\nonumber\\
&+\text{terms without $\lambda$},\nonumber
\end{align*}
\begin{align*}
&[{\varphi_{ijk} g^{il}g^{jm}c^{k} b_{l}b_{m}} _{\lambda}D(\varphi_{ijk}g^{il}c^{j}c^{k}b_{l})]\\
=&\left((-4)\tensor{\varphi}{^{ij}_{k}}\tensor{\varphi}{^{k}_{mj,i}}c^{m}+2\tensor{\varphi}{^{ij}_{k}}\tensor{\varphi}{^{k}_{ij,s}}c^{s}+2\tensor{\varphi}{^{l}_{ij}}\tensor{\varphi}{^{ij}_{k,l}}c^{k}\right) \frac{\lambda^{2}}{2}\\
&+\left(4\tensor{\varphi}{^{ij}_{k}}\tensor{\varphi}{^{l}_{mj,i}}c^{k}c^{m}b_{l}+ 2\tensor{\varphi}{^{ij}_{k}}\tensor{\varphi}{^{l}_{ji,s}}c^{k}c^{s}b_{l}\right.\\
&\left.+ (-2)\tensor{\varphi}{^{ij}_{k}}\tensor{\varphi}{^{k}_{mn,j}}c^{m}c^{n}b_{i}+4\tensor{\varphi}{^{ij}_{k}}\tensor{\varphi}{^{k}_{mj,s}}c^{m}c^{s}b_{i}\right.\\
&\left.+2\tensor{\varphi}{^{ij}_{k}}\partial(\tensor{\varphi}{^{l}_{ji,l}})c^{k}+ 2\partial(\tensor{\varphi}{^{ij}_{k}})\tensor{\varphi}{^{k}_{ij,s}}c^{s}-2\partial(\tensor{\varphi}{^{ij}_{k}})\tensor{\varphi}{^{k}_{mj,i}}c^{m}\right.\\
&\left.+ 2\partial(\tensor{\varphi}{^{ij}_{k}})\tensor{\varphi}{^{k}_{jn,i}}c^{n}+(-4)\tensor{\varphi}{^{ij}_{k}}\tensor{\varphi}{^{k}_{mj}}\partial\gamma^{m}b_{i}\right.\\
&\left.+2\tensor{\varphi}{^{ij}_{k}}\tensor{\varphi}{^{l}_{ji}}c^{k}\beta_{l}+(-4)\tensor{\varphi}{^{l}_{mj}}\tensor{\varphi}{^{ij}_{k,l}}c^{k}c^{m}b_{i}+2\tensor{\varphi}{^{l}_{ij}}\tensor{\varphi}{^{ij}_{k,l}}\partial
 c^{k}\right)\lambda\\
&+\text{terms without $\lambda$},
\end{align*}
\begin{align*}
&[{\varphi_{ijk} g^{il}g^{jm}c^{k} b_{l}b_{m}}_{\lambda}D(\varphi_{ijk} g^{il}g^{jm}c^{k} b_{l}b_{m})]\\
=&\left(4\tensor{\varphi}{^{km}_{n}}\tensor{\varphi}{^{ij}_{k,m}}c^{n}b_{i}b_{j}+8\tensor{\varphi}{^{ij}_{k}}\tensor{\varphi}{^{lk}_{n,j}}c^{n}b_{i}b_{l}\right.\\
&\left.-4\tensor{\varphi}{^{ij}_{k}}\tensor{\varphi}{^{lk}_{j,s}}c^{s}b_{i}b_{l}+4\tensor{\varphi}{^{ij}_{k}}\tensor{\varphi}{^{km}_{i}}b_{j}\beta_{m}\right.\\
&\left.+4\tensor{\varphi}{^{ij}_{k}}\partial(\tensor{\varphi}{^{lk}_{j,l}})b_{i}+4\partial(\tensor{\varphi}{^{ij}_{k}})\tensor{\varphi}{^{lk}_{j,i}}b_{l}\right.\\
&\left.+4\tensor{\varphi}{^{km}_{j}}\tensor{\varphi}{^{ij}_{k,m}}\partial b_{i}\right)\lambda+ \text{terms without $\lambda$},
\end{align*}
\begin{align*}
&[{\varphi_{ijk} g^{il}g^{jm}c^{k} b_{l}b_{m}}_{\lambda}D(\varphi_{ijk} g^{il}g^{jm}g^{kn}b_{l}b_{m}b_{n})]\\
=&\left(12\tensor{\varphi}{^{ij}_{k}}\tensor{\varphi}{^{lmk}_{,i}}b_{j}b_{l}b_{m}\right)\lambda + \text{terms without $\lambda$},
\end{align*}
\begin{align*}
&[{\varphi_{ijk} g^{il}g^{jm}c^{k} b_{l}b_{m}}_{\lambda}D(\varphi_{ijk}\Gamma^{i}_{mn}g^{jm}\partial \gamma^{n}g^{kl}b_{l})]\\
=&\left(2\tensor{\varphi}{^{ij}_{k}}F^{k}_{i}b_{j}\right)\frac{\lambda^{2}}{2}+\left(2\tensor{\varphi}{^{ij}_{k}}F^{k}_{m,i}\partial \gamma^{m}b_{j}\right.\\
&\left.+2\tensor{\varphi}{^{ij}_{k}}F^{l}_{j}c^{k}b_{i}b_{l}+2\tensor{\varphi}{^{ij}_{k}}F^{k}_{i}\partial b_{j}+2\partial(\tensor{\varphi}{^{ij}_{k}})F^{k}_{i}b_{j}\right)\lambda\\
&+\text{terms without $\lambda$},
\end{align*}
\begin{align*}
&[{\varphi_{ijk} g^{il}g^{jm}c^{k} b_{l}b_{m}}_{\lambda}D(\varphi_{ijk}\Gamma^{i}_{mn}g^{jm}\partial \gamma^{n}c^{k})]\\
=&\left( 2\tensor{\varphi}{^{ij}_{k}}F_{ij}c^{k}\right)\frac{\lambda^{2}}{2}+\left(2\tensor{\varphi}{^{ij}_{k}}F_{il,j}\partial\gamma^{l}c^{k}+2\tensor{\varphi}{^{ij}_{k}}F_{mi}c^{k}c^{m}b_{j}\right.\\
&\left.+2\tensor{\varphi}{^{ij}_{k}}F_{ij}\partial c^{k}+ 2\partial(\tensor{\varphi}{^{ij}_{k}})F_{ij}c^{k}\right)\lambda+ \text{terms without $\lambda$},
\end{align*}
\begin{align*}
&[{\varphi_{ijk} g^{il}g^{jm}g^{kn}b_{l}b_{m}b_{n}}_{\lambda}D(\varphi_{ijk} c^{i}c^{j}c^{k})]\\
=&\left(18\varphi^{ijk}\varphi_{lkj,i}c^{l}+6\varphi^{ijk}\varphi_{ijk,s}c^{s}\right)\frac{\lambda^{2}}{2}\\
&+\left(18\varphi^{ijk}\varphi_{lmj,k}c^{l}c^{m}b_{i}+18\varphi^{ijk}\varphi_{ljk,s}c^{l}c^{s}b_{i}\right.\\
&\left.+18\partial(\varphi^{ijk})\varphi_{lkj,i}c^{l}+6\partial(\varphi^{ijk})\varphi_{ijk,s}c^{s}\right.\\
&\left.+18\varphi^{ijk}\varphi_{lkj}\partial\gamma^{l}b_{i}\right)\lambda+\text{terms without $\lambda$},
\end{align*}
\begin{align*}
&[{\varphi_{ijk} g^{il}g^{jm}g^{kn}b_{l}b_{m}b_{n}}_{\lambda}D(\varphi_{ijk}g^{il}c^{j}c^{k}b_{l})]\\
=&\left(12\varphi^{ijk}\tensor{\varphi}{^{l}_{mj,k}}c^{m}b_{i}b_{l}+6\tensor{\varphi}{^{ijk}}\tensor{\varphi}{^{l}_{jk,s}}c^{s}b_{i}b_{l}\right.\\
&\left.+6\varphi^{ijk}\partial(\tensor{\varphi}{^{l}_{kj,l}})b_{i}+6\partial(\tensor{\varphi}{^{ijk}})\tensor{\varphi}{^{l}_{kj,i}}b_{l}+6\varphi^{ijk}\tensor{\varphi}{^{l}_{kj}}b_{i}\beta_{l}\right.\\
&\left.+6\tensor{\varphi}{^{l}_{mk}}\tensor{\varphi}{^{ijk}_{,l}}c^{m}b_{i}b_{j}+6\tensor{\varphi}{^{l}_{jk}}\tensor{\varphi}{^{ijk}_{,l}}\partial b_{i}\right)\lambda\\
&+\text{terms without $\lambda$},
\end{align*}
\begin{align*}
&[{\varphi_{ijk} g^{il}g^{jm}g^{kn}b_{l}b_{m}b_{n}}_{\lambda}D(\varphi_{ijk} g^{il}g^{jm}c^{k} b_{l}b_{m})]\\
=&\left(12\tensor{\varphi}{^{ijk}}\tensor{\varphi}{^{lm}_{i,j}}b_{k}b_{l}b_{m}\right)\lambda+ \text{terms without $\lambda$},
\end{align*}
\begin{align*}
&[{\varphi_{ijk} g^{il}g^{jm}g^{kn}b_{l}b_{m}b_{n}}_{\lambda}D(\varphi_{ijk}\Gamma^{i}_{mn}g^{jm}\partial \gamma^{n}g^{kl}b_{l})]\\
=&\left(3\varphi^{ijk}F^{l}_{i}b_{j}b_{k}b_{l}\right)\lambda + \text{terms without $\lambda$}, 
\end{align*}
\begin{align*}
&[{\varphi_{ijk} g^{il}g^{jm}g^{kn}b_{l}b_{m}b_{n}}_{\lambda}D(\varphi_{ijk}\Gamma^{i}_{mn}g^{jm}\partial \gamma^{n}c^{k})]\\
=&\left(6\varphi^{ijk}F_{ij}b_{k}\right)\frac{\lambda^{2}}{2} +\left( 6\varphi^{ijk}F_{il,j}\partial\gamma^{l}b_{k}\right.\\
&\left.+3\varphi^{ijk}F_{mi}c^{m}b_{j}b_{k}+6\varphi^{ijk}F_{ij}\partial b_{k}+6\partial(\varphi^{ijk})F_{ij}b_{k}\right)\lambda\\
&+\text{terms without $\lambda$},
\end{align*}
\begin{align*}
&[{F^{l}_{m}\partial\gamma^{m}b_{l}}_{\lambda}D(\varphi_{ijk}g^{il}c^{j}c^{k}b_{l})]\\
=&\left(2F^{j}_{i}\tensor{\varphi}{^{i}_{jk}}c^{k}\right)\frac{\lambda^{2}}{2}+\left(F^{l}_{i}\tensor{\varphi}{^{i}_{jk}}c^{j}c^{k}b_{l}\right.\\
&\left.+2\tensor{\varphi}{^{i}_{jk}}F^{k}_{m,i}\partial\gamma^{m}c^{j}+2\partial(F^{j}_{i})\tensor{\varphi}{^{i}_{jk}}c^{k}\right)\lambda+ \text{terms without $\lambda$},
\end{align*}
\begin{align*}
&[{F^{l}_{m}\partial\gamma^{m}b_{l}}_{\lambda}D(\varphi_{ijk} g^{il}g^{jm}c^{k} b_{l}b_{m})]\\
=&\left(2F^{k}_{j}\tensor{\varphi}{^{ij}_{k}}b_{i}\right)\frac{\lambda^{2}}{2}+\left(2F^{l}_{j}\tensor{\varphi}{^{ij}_{k}}c^{k}b_{i}b_{l}+(-2)\tensor{\varphi}{^{ij}_{k}}F^{k}_{m,j}\partial\gamma^{m}b_{i}\right.\\
&\left.+2\partial(F^{k}_{j})\tensor{\varphi}{^{ij}_{k}}b_{i}\right)\lambda+ \text{terms without $\lambda$},
\end{align*}
\begin{align*}
[{F^{l}_{m}\partial\gamma^{m}b_{l}}_{\lambda}D(\varphi_{ijk} g^{il}g^{jm}g^{kn}b_{l}b_{m}b_{n})]&=\left(3F^{l}_{k}\varphi^{ijk}b_{i}b_{j}b_{l}\right)\lambda+ \text{terms without $\lambda$},
\end{align*}
\begin{align*}
[{F^{l}_{m}\partial\gamma^{m}b_{l}}_{\lambda}D(\varphi_{ijk}\Gamma^{i}_{mn}g^{jm}\partial \gamma^{n}g^{kl}b_{l})]&=\left(2F^{j}_{m}F^{i}_{j}\partial\gamma^{m}b_{i}\right)\lambda+ \text{terms without $\lambda$},
\end{align*}
\begin{align*}
[{F^{l}_{m}\partial\gamma^{m}b_{l}}_{\lambda}D(\varphi_{ijk}\Gamma^{i}_{mn}g^{jm}\partial \gamma^{n}c^{k})]&=\left(F^{i}_{m}F_{ji}\partial\gamma^{m}c^{j}\right)\lambda+ \text{terms without $\lambda$},
\end{align*}
\begin{align*}
[{F_{ji}\partial\gamma^{i}c^{j}}_{\lambda}D(\varphi_{ijk}g^{il}c^{j}c^{k}b_{l})]&=\left(F_{ji}\tensor{\varphi}{^{i}_{mn}}c^{j}c^{m}c^{n}\right)\lambda+ \text{terms without $\lambda$},
\end{align*}
\begin{align*}
&[{F_{ji}\partial\gamma^{i}c^{j}}_{\lambda}D(\varphi_{ijk} g^{il}g^{jm}c^{k} b_{l}b_{m})]\\
=&\left(2F_{ji}\tensor{\varphi}{^{ij}_{n}}c^{n}\right)\frac{\lambda^{2}}{2} + \left(2F_{ji}\tensor{\varphi}{^{li}_{n}}c^{j}c^{n}b_{l}\right.\\
&\left.+2\tensor{\varphi}{^{jm}_{n}}F_{ji,m}\partial\gamma^{i}c^{n}+2\partial(F_{ji})\tensor{\varphi}{^{ij}_{n}}c^{n}\right)\lambda+ \text{terms without $\lambda$},
\end{align*}
\begin{align*}
&[{F_{ji}\partial\gamma^{i}c^{j}}_{\lambda}D(\varphi_{ijk} g^{il}g^{jm}g^{kn}b_{l}b_{m}b_{n})]\\
=&\left(6F_{ji}\varphi^{ijm}b_{m}\right)\frac{\lambda^{2}}{2}+\left(3F_{ji}\varphi^{lmi}c^{j}b_{l}b_{m}\right.\\
&\left.+6\varphi^{ljn}F_{ji,n}\partial\gamma^{i}b_{l}+6\partial(F_{ji})\varphi^{ijl}b_{l}\right)\lambda+ \text{terms without $\lambda$},
\end{align*}
\begin{align*}
[{F_{ji}\partial\gamma^{i}c^{j}}_{\lambda}D(\varphi_{ijk}\Gamma^{i}_{mn}g^{jm}\partial \gamma^{n}g^{kl}b_{l})]&=\left(F_{ji}F^{i}_{m}\partial\gamma^{m}c^{j}\right)\lambda+ \text{terms without $\lambda$}.
\end{align*}

\subsubsection{\texorpdfstring{$({\Phi_{+}}_{(1)}K_{+})+({\Phi_{-}}_{(1)}K_{-})$}{}}\label{PhioneK(1)}
Now we want to compute $({\Phi_{+}}_{(1)}K_{+})+({\Phi_{-}}_{(1)}K_{-})$, it should be notice that the $\lambda$-brackets used to compute ${\Phi_{+}}_{(1)}K_{+}$ and ${\Phi_{-}}_{(1)}K_{-}$ are the same modulo a sign. Then to compute $({\Phi_{+}}_{(1)}K_{+})+({\Phi_{-}}_{(1)}K_{-})$ we only need to take into account the $\lambda$-brackets that have the same sign an consider each one twice.\\
We compute  $({\Phi_{+}}_{(1)}K_{+})+({\Phi_{-}}_{(1)}K_{-})$ analyzing the coefficient of each type of term that appears. All the coefficients were obtained after a long but straightforward computation using identities (\ref{dosPhidosg}) and (\ref{dosPhiunag}), and the fact that $d\varphi=0$ and $\nabla g=0$, except the coefficient of $\partial\gamma^{i}c^{j}$ that is more involved and is detailed below.\\

\fbox{coefficient of $b_{i}b_{j}b_{k}$:} \hspace{0.1in} $0$,

\fbox{coefficient of $c_{i}c_{j}b_{k} $:} \hspace{0.1in} $0$,

\fbox{coefficient of $\partial c^{i} $:} \hspace{0.1in} $0$,

\fbox{coefficient of $\partial\gamma^{i}b_{i} $:} \hspace{0.1in}  $(-3)\partial\gamma^{i}b_{i}$,

\fbox{coefficient of $c^{i}\beta_{i}$:} \hspace{0.1in}  $(-3)c^{i}\beta_{i}$,

\fbox{coefficient of $\partial\gamma^{i}c^{j}$:} \hspace{0.1in} $0$.

\textbf{Computations to obtain the coefficient of} $\partial\gamma^{i}c^{j}$:\\ 
Denote by $A_{1}$ the terms of type $\partial\gamma^{i}c^{j}$ that appear in the computations of the $c^{i}\beta_{i}$ coefficient due to quasi-associativity (\ref{quasi-associativity}):
\begin{align*}
A_{1}=&-\dfrac{1}{8}\partial\tensor{\varphi}{_{ijk}}\tensor{\varphi}{^{jkn}_{,n}}c^{i}+(-\dfrac{1}{8})\partial\tensor{\varphi}{^{jkn}}\tensor{\varphi}{_{ijk,n}}c^{i}+\dfrac{1}{4}\partial\tensor{\varphi}{^{ij}_{k}}\tensor{\varphi}{^{k}_{mi,j}}c^{m}\nonumber\\
&+\dfrac{1}{4}\partial\tensor{\varphi}{^{k}_{mi}}\tensor{\varphi}{^{ij}_{k,j}}c^{m}+\dfrac{1}{8}\partial\tensor{\varphi}{^{ij}_{k}}\tensor{\varphi}{^{l}_{ji,l}}c^{k}+\dfrac{1}{8}\partial\tensor{\varphi}{^{l}_{ji}}\tensor{\varphi}{^{ij}_{k,l}}c^{k},\nonumber
\end{align*}
Collecting the other terms that contain $\partial\gamma^{i}c^{j}$: denote by $A_{2}$ the sum of the terms that does not contain derivatives of the Christoffel symbols, denote by $A_{3}$ the sum of the terms containing derivatives of the Christoffel symbols.\\
We have:
\begin{equation}
A_{3}=\dfrac{1}{2}\tensor{\varphi}{^m^l_i}\tensor{\varphi}{^n_l_s}(\Gamma^{i}_{mn})_{,r}\partial\gamma^{r}c^{s}+\dfrac{1}{2}\tensor{\varphi}{^s_r_l}\tensor{\varphi}{^m^l_i}(\Gamma^{i}_{mn})_{,s}\partial\gamma^{n}c^{r}.\nonumber
\end{equation}
Using the identity (\ref{dosPhiunag}) we get:
\begin{align*}
A_{3}=&\dfrac{1}{2}g_{is}g^{mn}(\Gamma^{i}_{mn})_{,r}\partial\gamma^{r}c^{s}+(-\dfrac{1}{2})g_{ri}g^{sm}(\Gamma^{i}_{mn})_{,s}\partial\gamma^{n}c^{r}+\dfrac{1}{2}(\Gamma^{s}_{mn})_{,s}\partial\gamma^{n}c^{m}\nonumber\\
&+\dfrac{1}{2}\tensor{\psi}{^s_r^m_i}(\Gamma^{i}_{mn})_{,s}\partial\gamma^{n}c^{r}.\nonumber
\end{align*}
Let $R$ denote the Riemann curvature, using the identity (\ref{RiemanncurvatureChristoffel}) we can work the first two summands of $A_{3}$:
\begin{align*}
&\dfrac{1}{2}g_{is}g^{mn}(\Gamma^{i}_{mn})_{,r}\partial\gamma^{r}c^{s}\\
=&\dfrac{1}{2}g_{is}g^{mn}\tensor{R}{^i_n_r_m}\partial\gamma^{r}c^{s}+\dfrac{1}{2}g_{is}g^{mn}(\tensor{\Gamma}{_r^i_n})_{,m}\partial\gamma^{r}c^{s}\nonumber\\
&+(-\dfrac{1}{2})g_{is}g^{mn}\tensor{\Gamma}{_r^i_a}\tensor{\Gamma}{_m^a_n}\partial\gamma^{r}c^{s}+\dfrac{1}{2}g_{is}g^{mn}\tensor{\Gamma}{_m^i_a}\tensor{\Gamma}{_r^a_n}\partial\gamma^{r}c^{s}\nonumber\\
=&\dfrac{1}{2}R_{sr}\partial\gamma^{r}c^{s}+\dfrac{1}{2}g_{is}g^{mn}(\tensor{\Gamma}{_r^i_n})_{,m}\partial\gamma^{r}c^{s}+(-\dfrac{1}{2})\tensor{\Gamma}{_i_a_s}\tensor{\Gamma}{_j^i^j}\partial\gamma^{s}c^{a}\nonumber\\
&+\dfrac{1}{2}\tensor{\Gamma}{_i_a_j}\tensor{\Gamma}{^i^j_s}\partial\gamma^{s}c^{a},\nonumber
\end{align*}
\begin{align*}
&\dfrac{1}{2}g_{ri}g^{sm}(\Gamma^{i}_{mn})_{,s}\partial\gamma^{n}c^{r}\\
=&(-\dfrac{1}{2})g_{ri}g^{sm}\tensor{R}{^i_n_s_m}\partial\gamma^{n}c^{r}+(-\dfrac{1}{2})g_{ri}g^{sm}(\tensor{\Gamma}{_s^i_n})_{,m}\partial\gamma^{n}c^{r}\nonumber\\
&+\dfrac{1}{2}g_{ri}g^{sm}\tensor{\Gamma}{_s^i_a}\tensor{\Gamma}{_m^a_n}\partial\gamma^{n}c^{r}+(-\dfrac{1}{2})g_{ri}g^{sm}\tensor{\Gamma}{_m^i_a}\tensor{\Gamma}{_s^a_n}\partial\gamma^{n}c^{r}\nonumber\\
=&(-\dfrac{1}{2})g_{ri}g^{sm}(\tensor{\Gamma}{_s^i_n})_{,m}\partial\gamma^{n}c^{r}\nonumber\\
=&-\dfrac{1}{2}g_{is}g^{mn}(\tensor{\Gamma}{_r^i_n})_{,m}\partial\gamma^{r}c^{s},\nonumber
\end{align*}
then
\begin{align*}
A_{3}=&\dfrac{1}{2}R_{sr}\partial\gamma^{r}c^{s}+(-\dfrac{1}{2})\tensor{\Gamma}{_i_a_s}\tensor{\Gamma}{_j^i^j}\partial\gamma^{s}c^{a}+\dfrac{1}{2}\tensor{\Gamma}{_i_a_j}\tensor{\Gamma}{^i^j_s}\partial\gamma^{s}c^{a}+\dfrac{1}{2}(\Gamma^{s}_{mn})_{,s}\partial\gamma^{n}c^{m}\nonumber\\
&+\dfrac{1}{2}\tensor{\psi}{^s_r^m_i}(\Gamma^{i}_{mn})_{,s}\partial\gamma^{n}c^{r}.\nonumber
\end{align*}
We also have
\begin{align*}
A_{1} + A_{2}=&(-\dfrac{1}{2})\tensor{\Gamma}{_a^i_j}\tensor{\Gamma}{_i^j_s}\partial\gamma^{s}c^{a}+\dfrac{1}{2}\tensor{\Gamma}{_i_a_s}\tensor{\Gamma}{_j^i^j}\partial\gamma^{s}c^{a}+(-\dfrac{1}{2})\tensor{\Gamma}{_i_a_j}\tensor{\Gamma}{^i^j_s}\partial\gamma^{s}c^{a}\nonumber\\
&+\dfrac{1}{2}\tensor{\Gamma}{_i^j_k}\tensor{\Gamma}{_l^i_s}\tensor{\psi}{^k_a^l_j}\partial\gamma^{s}c^{a},\nonumber
\end{align*}
then
\begin{align*}
&A_{1}+A_{2}+A_{3}\\
=&(-\dfrac{1}{2})\tensor{\Gamma}{_a^i_j}\tensor{\Gamma}{_i^j_s}\partial\gamma^{s}c^{a}+\dfrac{1}{2}\tensor{\Gamma}{_i^j_k}\tensor{\Gamma}{_l^i_s}\tensor{\psi}{^k_a^l_j}\partial\gamma^{s}c^{a}+\dfrac{1}{2}R_{sr}\partial\gamma^{r}c^{s}\nonumber\\
&+\dfrac{1}{2}(\Gamma^{s}_{mn})_{,s}\partial\gamma^{n}c^{m}+\dfrac{1}{2}\tensor{\psi}{^s_r^m_i}(\Gamma^{i}_{mn})_{,s}\partial\gamma^{n}c^{r}\nonumber\\
=&(\dfrac{1}{2}(\Gamma^{s}_{mn})_{,s}\partial\gamma^{n}c^{m}+(-\dfrac{1}{2})\tensor{\Gamma}{_a^i_j}\tensor{\Gamma}{_i^j_s}\partial\gamma^{s}c^{a})+(\dfrac{1}{2}\tensor{\psi}{^s_r^m_i}(\Gamma^{i}_{mn})_{,s}\partial\gamma^{n}c^{r}\nonumber\\
&+\dfrac{1}{2}\tensor{\psi}{^s_r^m_i}\tensor{\Gamma}{_a^i_s}\tensor{\Gamma}{_m^a_n}\partial\gamma^{n}c^{r})+\dfrac{1}{2}R_{sr}\partial\gamma^{r}c^{s}\nonumber\\
=&\dfrac{1}{2}R_{mn}\partial\gamma^{n}c^{m}+\dfrac{1}{2}\tensor{\psi}{^s_r^m_i}\tensor{R}{^i_m_s_n}\partial\gamma^{n}c^{r}+\dfrac{1}{2}R_{sr}\partial\gamma^{r}c^{s}\nonumber\\
=&R_{mn}\partial\gamma^{n}c^{m}+ \dfrac{1}{2}\tensor{R}{_a_b_c_d}\tensor{\psi}{^c^d_l_m}g^{am}\partial\gamma^{b}c^{l},\nonumber\\
=&0.\nonumber
\end{align*}
\noindent
To conclude the last equality is zero we use the Lemma \ref{lemmaG2Ricciflat} and Lemma \ref{lemmaG2Curvatureidentity}.\\
Finally we have proved that $$({\Phi_{+}}_{(1)}K_{+})+({\Phi_{-}}_{(1)}K_{-})=(-3)\partial\gamma^{i}b_{i}+(-3)c^{i}\beta_{i}$$.
\subsubsection{\texorpdfstring{${\Phi_{\pm}}_{(1)}K_{\pm}$}{}}\label{PhioneK(2)}
Now we want to compute ${\Phi_{+}}_{(1)}K_{+}\;\left({\Phi_{-}}_{(1)}K_{-}\right)$, as was noted in \ref{PhioneK(1)} the $\lambda$-brackets used to compute ${\Phi_{+}}_{(1)}K_{+}$ and ${\Phi_{-}}_{(1)}K_{-}$ are exactly the same modulo a sign. Then to compute $({\Phi_{+}}_{(1)}K_{+})\;\left({\Phi_{-}}_{(1)}K_{-}\right)$ we only need to consider the $\lambda$-brackets that change sign and remember to add one half of the sum $({\Phi_{+}}_{(1)}K_{+})+({\Phi_{-}}_{(1)}K_{-})$. We compute  ${\Phi_{+}}_{(1)}K_{+}\;\left({\Phi_{-}}_{(1)}K_{-}\right)$ analyzing the coefficient of each type of term that appears. All the coefficients were obtained after a long but straightforward computation using identities (\ref{dosPhidosg}) and (\ref{dosPhiunag}), and the fact that $d\varphi=0$ and $\nabla g=0$, except the coefficient of $\partial \gamma^{i}b_{j}$ that is more involved and is detailed below.

\fbox{coefficient of $b_{i}\beta_{j}$:}   \hspace{0.1in} $(-\tfrac{3}{2})g^{ij}b_{i}\beta_{j},$

\fbox{coefficient of $\partial b_{i}$:}   \hspace{0.1in} $(-\tfrac{3}{2})g^{ij}\Gamma^{k}_{ij}\partial b_{k}$, 

\fbox{coefficient of $c^{i}c^{j}c^{k}$:}  \hspace{0.1in} $0$,

\fbox{coefficient of $c^{i}b_{j}b_{k}$:}  \hspace{0.1in} $(-\tfrac{3}{2})g^{ij}\Gamma^{l}_{ik}c^{k}b_{j}b_{l},$

\fbox{coefficient of $\partial \gamma^{i}c^{j}$:}  \hspace{0.1in} $(-\tfrac{3}{2})g_{ij}\partial\gamma^{i}c^{j},$

\fbox{coefficient of $\partial \gamma^{i}b_{j}$:} \hspace{0.1in} $(-3)g^{ij}\Gamma^{k}_{il}\Gamma^{l}_{jm}\partial\gamma^{m}b_{k}$. \hspace{0.1in} 

\textbf{Computations to obtain the coefficient of} $\partial \gamma^{i}b_{j}$.\\

Denote by $A_{1}$ the terms of type $\partial \gamma^{i}b_{j}$ that appear in the computations of the $b_{i}\beta_{j}$ coefficient due to quasi-associativity (\ref{quasi-associativity}):
\begin{align*}
A_{1}=&(-\dfrac{1}{16})\partial(\varphi^{ijk})\tensor{\varphi}{^{l}_{ij,k}}b_{l}+ (-\dfrac{1}{16})\partial(\tensor{\varphi}{^{l}_{ij}})\tensor{\varphi}{^{ijk}_{,k}}b_{l}+ \dfrac{1}{16}\partial(\varphi^{ijk})\tensor{\varphi}{^{l}_{kj,l}}b_{i}\nonumber\\
&+\dfrac{1}{16}\partial(\tensor{\varphi}{^{l}_{kj}})\tensor{\varphi}{^{ijk}_{,l}}b_{i}+\dfrac{1}{8}\partial(\tensor{\varphi}{^{ij}_{k}})\tensor{\varphi}{^{km}_{i,m}}b_{j}+\dfrac{1}{8}\partial(\tensor{\varphi}{^{km}_{i}})\tensor{\varphi}{^{ij}_{k,m}}b_{j}.\nonumber
\end{align*}
Collecting the other terms that contain $\partial \gamma^{i}b_{j}$: denote by $A_{2}$ the sum of the terms that does not contain derivatives of the Christoffel symbols, denote by $A_{3}$ the sum of the terms containing derivatives of the Christoffel symbols.\\
We have:
\begin{align*}
A_{1}+A_{2}=&(-\dfrac{1}{2})\tensor{\Gamma}{_i^j_s}\tensor{\Gamma}{^a^i_j}\partial\gamma^{s}b_{a}+\dfrac{1}{2}\tensor{\Gamma}{_i^a_s}\tensor{\Gamma}{_j^i^j}\partial\gamma^{s}b_{a}+(-\dfrac{7}{2})\tensor{\Gamma}{_i^a_j}\tensor{\Gamma}{^i^j_s}\partial\gamma^{s}b_{a}\nonumber\\
&+(-\dfrac{1}{2})\tensor{\psi}{^a_j^k^l}\tensor{\Gamma}{_i^j_k}\tensor{\Gamma}{_l^i_s}\partial\gamma^{s}b_{a},\nonumber
\end{align*}
\begin{align*}
A_{3}=&\dfrac{1}{2}g^{lk}(\tensor{\Gamma}{_k^i_l})_{,s}\partial\gamma^{s}b_{i}+(-\dfrac{1}{2})g^{rm}(\tensor{\Gamma}{_m^i_s})_{,r}\partial\gamma^{s}b_{i}+\dfrac{1}{2}g^{am}(\tensor{\Gamma}{_m^i_s})_{,i}\partial\gamma^{s}b_{a}\nonumber\\
&+\dfrac{1}{2}\tensor{\psi}{^r^a^m^n}g_{in}(\tensor{\Gamma}{_m^i_s})_{,r}\partial\gamma^{s}b_{a}.\nonumber
\end{align*}
Let $R$ denote the Riemann curvature, using the identity (\ref{RiemanncurvatureChristoffel}) we can write:
\begin{align*}
&\dfrac{1}{2}g^{lk}(\tensor{\Gamma}{_k^i_l})_{,s}\partial\gamma^{s}b_{i}\\
=&\dfrac{1}{2}g^{kl}\tensor{R}{^i_l_s_k}\partial\gamma^{s}b_{i}+\dfrac{1}{2}g^{kl}(\tensor{\Gamma}{_s^i_l})_{,k}\partial\gamma^{s}b_{i}+(-\dfrac{1}{2})g^{kl}\tensor{\Gamma}{_s^i_a}\tensor{\Gamma}{_k^a_l}\partial\gamma^{s}b_{i}\nonumber\\
&+\dfrac{1}{2}g^{kl}\tensor{\Gamma}{_k^i_a}\tensor{\Gamma}{_s^a_l}\partial\gamma^{s}b_{i},\nonumber
\end{align*}
and using identity (\ref{RiccitensorChristoffel}) we have
\begin{align*}
\dfrac{1}{2}g^{am}(\tensor{\Gamma}{_m^i_s})_{,i}\partial\gamma^{s}b_{a}=&\dfrac{1}{2}g^{am}\tensor{R}{_m_s}\partial\gamma^{s}b_{a}+\dfrac{1}{2}g^{am}\tensor{\Gamma}{_m^j_l}\tensor{\Gamma}{_s^l_j}\partial\gamma^{s}b_{a}.\nonumber
\end{align*}
Then
\begin{align*}
A_{3}=&\dfrac{1}{2}\tensor{R}{^i_s}\partial\gamma^{s}b_{i}+(-\dfrac{1}{2})g^{kl}\tensor{\Gamma}{_s^i_a}\tensor{\Gamma}{_k^a_l}\partial\gamma^{s}b_{i}++\dfrac{1}{2}g^{kl}\tensor{\Gamma}{_k^i_a}\tensor{\Gamma}{_s^a_l}\partial\gamma^{s}b_{i}\nonumber\\
&+\dfrac{1}{2}\tensor{R}{^a_s}\partial\gamma^{s}b_{a}+\dfrac{1}{2}g^{am}\tensor{\Gamma}{_m^j_l}\tensor{\Gamma}{_s^l_j}\partial\gamma^{s}b_{a}+\dfrac{1}{2}\tensor{\psi}{^r^a^m^n}g_{in}(\tensor{\Gamma}{_m^i_s})_{,r}\partial\gamma^{s}b_{a}.\nonumber
\end{align*}
Using again the identity (\ref{RiemanncurvatureChristoffel}) we can write:
\begin{align*}
&(-\dfrac{1}{2})\tensor{\psi}{^a_j^k^l}\tensor{\Gamma}{_i^j_k}\tensor{\Gamma}{_l^i_s}\partial\gamma^{s}b_{a}+\dfrac{1}{2}\tensor{\psi}{^r^a^m^n}g_{in}(\tensor{\Gamma}{_m^i_s})_{,r}\partial\gamma^{s}b_{a}\nonumber\\
=&(-\dfrac{1}{2})\tensor{\psi}{^a_j^k^l}\tensor{\Gamma}{_k^j_i}\tensor{\Gamma}{_l^i_s}\partial\gamma^{s}b_{a}+(-\dfrac{1}{2})\tensor{\psi}{^a_j^k^l}(\tensor{\Gamma}{_l^j_s})_{,k}\partial\gamma^{s}b_{a}\nonumber\\
=&(-\dfrac{1}{2})\tensor{\psi}{^a_j^k^l}\tensor{R}{^j_l_k_s}\partial\gamma^{s}b_{a}\nonumber\\
=&\dfrac{1}{2}\tensor{R}{_k_s_j_l}\tensor{\psi}{^j^l_i^k}g^{ai}\partial\gamma^{s}b_{a},\nonumber
\end{align*}
then
\begin{align*}
A_{1}+A_{2}+A_{3}=&(-3)g^{kl}\tensor{\Gamma}{_k^i_a}\tensor{\Gamma}{_s^a_l}\partial\gamma^{s}b_{i}+\tensor{R}{^i_j}\partial\gamma^{j}b_{i}+\dfrac{1}{2}\tensor{R}{_k_s_j_l}\tensor{\psi}{^j^l_i^k}g^{ai}\partial\gamma^{s}b_{a}\nonumber\\
=&(-3)g^{ij}\Gamma^{k}_{il}\Gamma^{l}_{jm}\partial\gamma^{m}b_{k}.\nonumber
\end{align*}
\noindent
To establish the last equality we use the Lemma \ref{lemmaG2Ricciflat} and Lemma \ref{lemmaG2Curvatureidentity}.\\
Finally we have proved that:
\begin{align*}
{\Phi_{+}}_{(1)}K_{+}=&-\tfrac{3}{2}c^{i}\beta_{i}-\tfrac{3}{2}\partial\gamma^{i}b_{i}-\tfrac{3}{2}g^{ij}b_{i}\beta_{j}-\tfrac{3}{2}g^{ij}\Gamma^{l}_{ik}c^{k}b_{j}b_{l}-\tfrac{3}{2}g^{ij}\Gamma^{k}_{ij}\partial b_{k}\nonumber\\
&-\tfrac{3}{2}g_{ij}\partial\gamma^{i}c^{j}-3g^{ij}\Gamma^{k}_{il}\Gamma^{l}_{jm}\partial\gamma^{m}b_{k},\nonumber
\end{align*}
\begin{align*}
{\Phi_{-}}_{(1)}K_{-}=&-\tfrac{3}{2}c^{i}\beta_{i}-\tfrac{3}{2}\partial\gamma^{i}b_{i}+(\tfrac{3}{2})g^{ij}b_{i}\beta_{j}+(\tfrac{3}{2})g^{ij}\Gamma^{l}_{ik}c^{k}b_{j}b_{l}+(\tfrac{3}{2})g^{ij}\Gamma^{k}_{ij}\partial b_{k}\nonumber\\
&+(\tfrac{3}{2})g_{ij}\partial\gamma^{i}c^{j}+3g^{ij}\Gamma^{k}_{il}\Gamma^{l}_{jm}\partial\gamma^{m}b_{k}.\nonumber
\end{align*}

\subsection{\texorpdfstring{$[{G_{+}}_{\lambda}G_{-}]$}{}}\label{G+withG-}

Now we compute $[{G_{+}}_{\lambda}G_{-}]$,
\begin{eqnarray}
G_{+}&=&\tfrac{1}{2}c^{i}\beta_{i}+\tfrac{1}{2}\partial\gamma^{i}b_{i}+\tfrac{1}{2}g^{ij}b_{i}\beta_{j}+\tfrac{1}{2}g^{ij}\Gamma^{l}_{ik}c^{k}b_{j}b_{l}+\tfrac{1}{2}g^{ij}\Gamma^{k}_{ij}\partial b_{k}\nonumber\\
&&+\tfrac{1}{2}g_{ij}\partial\gamma^{i}c^{j}+g^{ij}\Gamma^{k}_{il}\Gamma^{l}_{jm}\partial\gamma^{m}b_{k},\nonumber
\end{eqnarray}
\begin{eqnarray}
G_{-}&=&\tfrac{1}{2}c^{i}\beta_{i}+\tfrac{1}{2}\partial\gamma^{i}b_{i}+(-\tfrac{1}{2})g^{ij}b_{i}\beta_{j}+(-\tfrac{1}{2})g^{ij}\Gamma^{l}_{ik}c^{k}b_{j}b_{l}+(-\tfrac{1}{2})g^{ij}\Gamma^{k}_{ij}\partial b_{k}\nonumber\\
&&+(-\tfrac{1}{2})g_{ij}\partial\gamma^{i}c^{j}+(-1)g^{ij}\Gamma^{k}_{il}\Gamma^{l}_{jm}\partial\gamma^{m}b_{k}.\nonumber
\end{eqnarray}
We list the  non-zero $\lambda$-brackets between the summands of $G_{+}$ and the summands of $G_{-}$, to compute these we used the $\mathtt{Mathematica}$ package \cite{Thielemans91}.
\begin{align*}
[{c^{i}\beta_{i}}_{\lambda}\partial\gamma^{j}b_{j}]=&\tfrac{7}{2}\lambda^{2}+(c^{i}b_{i})\lambda+ \partial c^{i}b_{i}+\partial\gamma^{j}\beta_{j},\nonumber
\end{align*}
\begin{align*}
[{c^{i}\beta_{i}}_{\lambda}g^{lm}b_{l}\beta_{m}]=&\left((-1)(g^{lm}_{,l})_{,m}\right)\dfrac{\lambda^2}{2}+\left(g^{lm}_{,l}\beta_{m}+(-1)(g^{lm}_{,i})_{,m}c^{i}b_{l}\right)\lambda\nonumber\\
&+g^{lm}\beta_{l}\beta_{m}+g^{lm}_{,i}c^{i}b_{l}\beta_{m}+(-1)(g^{lm}_{,i})_{,m}\partial c^{i}b_{l}+\partial(g^{lm}_{,l})\beta_{m},\nonumber
\end{align*}
\begin{align*}
&[{c^{i}\beta_{i}}_{\lambda}g^{lm}\Gamma^{n}_{la}c^{a}b_{m}b_{n}]\\
=&\left((g^{lm}\Gamma^{n}_{la})_{,n}c^{a}b_{m}+(-1)(g^{lm}\Gamma^{n}_{la})_{,m}c^{a}b_{n}\right)\lambda\nonumber\\
&+(g^{lm}\Gamma^{n}_{la})c^{a}b_{m}\beta_{n}+(-1)(g^{lm}\Gamma^{n}_{la})c^{a}b_{n}\beta_{m}\nonumber\\
&+(-1)(g^{lm}\Gamma^{n}_{la})_{,i}c^{a}c^{i}b_{m}b_{n}+\partial[(g^{lm}\Gamma^{n}_{la})_{,n}]c^{a}b_{m}\nonumber\\
&+(-1)\partial[(g^{lm}\Gamma^{n}_{la})_{,m}]c^{a}b_{n},\nonumber
\end{align*}
\begin{align*}
[{c^{i}\beta_{i}}_{\lambda}g^{lm}\Gamma^{n}_{lm}\partial b_{n}]=&\left( (g^{lm}\Gamma^{n}_{lm})_{,n} \right)\dfrac{\lambda^2}{2}+\left(  (g^{lm}\Gamma^{n}_{lm})\beta_{n}+ \partial[(g^{lm}\Gamma^{n}_{lm})_{,n}]    \right)\lambda\nonumber\\
&(g^{lm}\Gamma^{n}_{lm})\partial\beta_{n}+(g^{lm}\Gamma^{n}_{lm})_{,i}c^{i}\partial b_{n}+\dfrac{1}{2}\partial^{2}[(g^{lm}\Gamma^{n}_{lm})_{,n}],\nonumber
\end{align*}
\begin{align*}
[{c^{i}\beta_{i}}_{\lambda}g_{lm}\partial\gamma^{l}c^{m}]=&g_{lm}\partial c^{l}c^{m}+(g_{lm})_{,i}\partial\gamma^{l}c^{i}c^{m},\nonumber
\end{align*}
\begin{align*}
&[{c^{i}\beta_{i}}_{\lambda}g^{lm}\Gamma^{n}_{la}\Gamma^{a}_{mr}\partial\gamma^{r}b_{n}]\\
=&\left(g^{lm}\Gamma^{n}_{la}\Gamma^{a}_{mn}\right)\dfrac{\lambda^2}{2}+\left( g^{lm}\Gamma^{n}_{la}\Gamma^{a}_{mi}c^{i}b_{n}+(g^{lm}\Gamma^{n}_{la}\Gamma^{a}_{mr})_{,n}\partial\gamma^{r}\right)\lambda\nonumber\\
&+g^{lm}\Gamma^{n}_{la}\Gamma^{a}_{mi}\partial c^{i}b_{n}+(g^{lm}\Gamma^{n}_{la}\Gamma^{a}_{mr})\partial\gamma^{r}\beta_{n}\nonumber\\
&+(g^{lm}\Gamma^{n}_{la}\Gamma^{a}_{mr})_{,i}\partial\gamma^{r}c^{i}b_{n}+\partial[(g^{lm}\Gamma^{n}_{la}\Gamma^{a}_{mr})_{,n}]\partial\gamma^{r},\nonumber
\end{align*}
\begin{align*}
[{\partial\gamma^{i}b_{i}}_{\lambda} c^{j}\beta_{j}]=&\tfrac{7}{2}\lambda^2+\left((-1)c^{i}b_{i}\right)\lambda + (-1)c^{i}\partial b_{i}+ \partial\gamma^{i}\beta_{i},\nonumber
\end{align*}
\begin{align*}
[{\partial\gamma^{i}b_{i}}_{\lambda} g^{lm}b_{l}\beta_{m}]=&g^{lm}\partial b_{m}b_{l},\nonumber
\end{align*}
\begin{align*}
[{\partial\gamma^{i}b_{i}}_{\lambda}g^{lm}\Gamma^{n}_{la}c^{a}b_{m}b_{n}]=&g^{lm}\Gamma^{n}_{li}\partial\gamma^{i}b_{m}b_{n},\nonumber
\end{align*}
\begin{align*}
[{\partial\gamma^{i}b_{i}}_{\lambda} g_{lm}\partial\gamma^{l}c^{m}]=&g_{lm}\partial\gamma^{l}\partial\gamma^{m},\nonumber
\end{align*}
\begin{align*}
[{g^{lm}b_{l}\beta_{m}}_{\lambda}c^{i}\beta_{i}]=&\left((-1)(g^{lm}_{,l})_{,m}\right)\dfrac{\lambda^2}{2}+\left((-1)(g^{lm}_{,l})\beta_{m}+(g^{lm}_{,i})_{,m}c^{i}b_{l}\right.\nonumber\\
&\left.+(-1)\partial[(g^{lm}_{,l})_{,m}]\right)\lambda+g^{lm}\beta_{l}\beta_{m}+g^{lm}_{,i}c^{i}b_{l}\beta_{m}\nonumber\\
&+(-1)g^{lm}_{,l}\partial\beta_{m}+(g^{lm}_{,i})_{,m}c^{i}\partial b_{l}+\partial[(g^{lm}_{,i})_{,m}]c^{i}b_{l}\nonumber\\
&+(-\dfrac{1}{2})\partial^2[(g^{lm}_{,l})_{,m}],\nonumber
\end{align*}
\begin{align*}
[{g^{lm}b_{l}\beta_{m}}_{\lambda}\partial\gamma^{i}b_{i}]=&(-1)g^{lm}b_{m}\partial b_{l},\nonumber
\end{align*}
\begin{align*}
&[{g^{lm}b_{l}\beta_{m}}_{\lambda}g^{ij}b_{i}\beta_{j}]\\
=&\left((g^{lm}_{,j})(g^{ij}_{,m})b_{i}b_{l}\right)\lambda+(-1)g^{lm}g^{ij}_{,m}b_{i}b_{l}\beta_{j}+g^{lm}(g^{ij}_{,m})_{,j}b_{i}\partial b_{l}\nonumber\\
&+g^{ij}(g^{lm}_{,j})b_{i}b_{l}\beta_{m}+g^{ij}(g^{lm}_{,j})_{,m}b_{i}\partial b_{l}+g^{ij}\partial[(g^{lm}_{,j})_{,m}]b_{i}b_{l}\nonumber\\
&+g^{lm}_{,j}g^{ij}_{,m}b_{i}\partial b_{l}+g^{lm}_{,j}\partial[(g^{ij}_{,m})]b_{i}b_{l}+\partial(g^{lm})(g^{ij}_{,m})_{,j}b_{i}b_{l},\nonumber
\end{align*}
\begin{align*}
&[{g^{lm}b_{l}\beta_{m}}_{\lambda}g^{ij}\Gamma^{a}_{ik}c^{k}b_{j}b_{a}]\\
=&\left((-1)g^{lm}(g^{ij}\Gamma^{a}_{il})_{,m}b_{a}b_{j}\right)\lambda+ (-1)g^{lm}(g^{ij}\Gamma^{a}_{il})b_{a}b_{j}\beta_{m}\nonumber\\
&+g^{lm}(g^{ij}\Gamma^{a}_{ik})_{,m}c^{k}b_{a}b_{j}b_{l}+(-1)g^{lm}\partial[(g^{ij}\Gamma^{a}_{il})_{,m}]b_{a}b_{j}\nonumber\\
&+(-1)\partial(g^{lm})(g^{ij}\Gamma^{a}_{il})_{,m}b_{a}b_{j},\nonumber
\end{align*}
\begin{align*}
[{g^{lm}b_{l}\beta_{m}}_{\lambda}g^{ij}\Gamma^{k}_{ij}\partial b_{k}]=&(-1)g^{lm}(g^{ij}\Gamma^{k}_{ij})_{,m}\partial b_{k}b_{l},\nonumber
\end{align*}
\begin{align*}
&[{g^{lm}b_{l}\beta_{m}}_{\lambda}g_{ij}\partial\gamma^{i}c^{j}]\\
=&\dfrac{1}{2}\lambda^{2}+\left((-1)c^{j}b_{j}+g^{lm}(g_{il})_{,m}\partial\gamma^{i}+g_{lm}\partial(g^{lm})\right)\lambda\nonumber\\
&+(-1)c^{j}\partial b_{j}+\partial\gamma^{i}\beta_{i}+(-1)g^{lm}(g_{ij})_{,m}\partial\gamma^{i}c^{j}b_{l}\nonumber\\
&+g^{lm}\partial[(g_{il})_{,m}]\partial\gamma^{i}+(-1)\partial(g^{lm})g_{mj}c^{j}b_{l}+2\partial(g^{lm})(g_{il})_{,m}\partial\gamma^{i}\nonumber\\
&+\dfrac{1}{2}g_{lm}\partial^{2}(g^{lm})+\partial(g_{il})(g^{lm})_{,m}\partial\gamma^{i},\nonumber
\end{align*}
\begin{align*}
&[{g^{lm}b_{l}\beta_{m}}_{\lambda}g^{ij}\Gamma^{k}_{in}\Gamma^{n}_{ja}\partial\gamma^{a}b_{k}]\\
=&\left((-1)g^{lm}g^{ij}\Gamma^{k}_{in}\Gamma^{n}_{jm}b_{k}b_{l}\right)\lambda+(-1)g^{lm}g^{ij}\Gamma^{k}_{in}\Gamma^{n}_{jm}b_{k}\partial b_{l}\nonumber\\
&+(-1)g^{lm}(g^{ij}\Gamma^{k}_{in}\Gamma^{n}_{ja})_{,m}\partial\gamma^{a}b_{k}b_{l}+(-1)\partial(g^{lm})g^{ij}\Gamma^{k}_{in}\Gamma^{n}_{jm}b_{k}b_{l},\nonumber
\end{align*}
\begin{align*}
&[{g^{ij}\Gamma^{l}_{ik}c^{k}b_{j}b_{l}}_{\lambda}c^{m}\beta_{m}]\\
=&\left((-1)(g^{ij}\Gamma^{m}_{ik})_{,m}c^{k}b_{j}+(g^{im}\Gamma^{l}_{ik})_{,m}c^{k}b_{l}\right)\lambda\nonumber\\
&+(g^{ij}\Gamma^{m}_{ik})c^{k}b_{j}\beta_{m}+(-1)(g^{im}\Gamma^{l}_{ik})c^{k}b_{l}\beta_{m}+(-1)(g^{ij}\Gamma^{l}_{ik})_{,m}c^{k}c^{m}b_{j}b_{l}\nonumber\\
&(-1)(g^{ij}\Gamma^{m}_{ik})_{,m}c^{k}\partial b_{j}+(g^{im}\Gamma^{l}_{ik})_{,m}c^{k}\partial b_{l}+(-1)(g^{ij}\Gamma^{m}_{ik})_{,m}\partial c^{k}b_{j}\nonumber\\
&+(g^{im}\Gamma^{l}_{ik})_{,m}\partial c^{k}b_{l},\nonumber
\end{align*}
\begin{align*}
[{g^{ij}\Gamma^{l}_{ik}c^{k}b_{j}b_{l}}_{\lambda}\partial\gamma^{m}b_{m}]=&g^{ij}\Gamma^{l}_{im}\partial\gamma^{m}b_{j}b_{l},\nonumber
\end{align*}
\begin{align*}
&[{g^{ij}\Gamma^{l}_{ik}c^{k}b_{j}b_{l}}_{\lambda}g^{mn}b_{m}\beta_{n}]\\
=&\left((-1)g^{mn}(g^{ij}\Gamma^{l}_{im})_{,n}b_{j}b_{l}\right)\lambda+(g^{ij}\Gamma^{l}_{im})g^{mn}b_{j}b_{l}\beta_{n}\nonumber\\
&+(-1)g^{mn}(g^{ij}\Gamma^{l}_{im})_{,n}b_{j}\partial b_{l}+(-1)g^{mn}(g^{ij}\Gamma^{l}_{ik})_{,n}c^{k}b_{j}b_{l}b_{m}\nonumber\\
&+(-1)g^{mn}(g^{ij}\Gamma^{l}_{im})_{,n}\partial b_{j}b_{l},\nonumber
\end{align*}
\begin{align*}
&[{g^{ij}\Gamma^{l}_{ik}c^{k}b_{j}b_{l}}_{\lambda}g^{mn}\Gamma^{r}_{ma}c^{a}b_{n}b_{r}]\\
=&\left(g^{ij}\Gamma^{l}_{ir}g^{mn}\Gamma^{r}_{ml}b_{j}b_{n}+(-1)g^{ij}\Gamma^{l}_{in}g^{mn}\Gamma^{r}_{ml}b_{j}b_{r}\right.\nonumber\\
&\left.+(-1)g^{ij}\Gamma^{l}_{ir}g^{mn}\Gamma^{r}_{mj}b_{l}b_{n}+g^{ij}\Gamma^{l}_{in}g^{mn}\Gamma^{r}_{mj}b_{l}b_{r}\right)\lambda\nonumber\\
&+g^{ij}\Gamma^{l}_{ik}g^{mn}\Gamma^{k}_{ma}c^{a}b_{j}b_{l}b_{n}+(-1)g^{ij}\Gamma^{l}_{ik}g^{mk}\Gamma^{r}_{ma}c^{a}b_{j}b_{l}b_{r}\nonumber\\
&+g^{ij}\Gamma^{l}_{ik}g^{mn}\Gamma^{r}_{ml}c^{k}b_{j}b_{n}b_{r}+(-1)g^{ij}\Gamma^{l}_{ik}g^{mn}\Gamma^{r}_{mj}c^{k}b_{l}b_{n}b_{r}\nonumber\\
&+g^{ij}\Gamma^{l}_{ik}g^{mn}\Gamma^{k}_{ml}\partial b_{j}b_{n}+(-1)g^{ij}\Gamma^{l}_{ik}g^{mk}\Gamma^{r}_{ml}\partial b_{j}b_{r}\nonumber\\
&+(-1)g^{ij}\Gamma^{l}_{ik}g^{mn}\Gamma^{k}_{mj}\partial b_{l}b_{n}+g^{ij}\Gamma^{l}_{ik}g^{mk}\Gamma^{r}_{mj}\partial b_{l}b_{r}\nonumber\\
&+\partial(g^{ij}\Gamma^{l}_{ik})g^{mn}\Gamma^{k}_{ml}b_{j}b_{n}+(-1)\partial(g^{ij}\Gamma^{l}_{ik})g^{mk}\Gamma^{r}_{ml}b_{j}b_{r}\nonumber\\
&+(-1)\partial(g^{ij}\Gamma^{l}_{ik})g^{mn}\Gamma^{k}_{mj}b_{l}b_{n}+\partial(g^{ij}\Gamma^{l}_{ik})g^{mk}\Gamma^{r}_{mj}b_{l}b_{r},\nonumber
\end{align*}
\begin{align*}
[{g^{ij}\Gamma^{l}_{ik}c^{k}b_{j}b_{l}}_{\lambda}g^{mn}\Gamma^{a}_{mn}\partial b_{a}]=&\left(g^{ij}\Gamma^{l}_{ik}g^{mn}\Gamma^{k}_{mn}b_{j}b_{l}\right)\lambda+g^{ij}\Gamma^{l}_{ik}g^{mn}\Gamma^{k}_{mn}b_{j}\partial b_{l}\nonumber\\
&+g^{ij}\Gamma^{l}_{ik}g^{mn}\Gamma^{k}_{mn}\partial b_{j}b_{l}+ \partial(g^{ij}\Gamma^{l}_{ik})g^{mn}\Gamma^{k}_{mn}b_{j}b_{l},\nonumber
\end{align*}
\begin{align*}
[{g^{ij}\Gamma^{l}_{ik}c^{k}b_{j}b_{l}}_{\lambda}g_{mn}\partial\gamma^{m}c^{n}]=&g^{ij}\Gamma^{l}_{ik}g_{lm}\partial\gamma^{m}c^{k}b_{j}+(-1)\Gamma^{l}_{ik}\partial\gamma^{i}c^{k}b_{l},\nonumber
\end{align*}
\begin{align*}
[{g^{ij}\Gamma^{l}_{ik}c^{k}b_{j}b_{l}}_{\lambda}g^{mn}\Gamma^{a}_{ms}\Gamma^{s}_{nr}\partial\gamma^{r}b_{a}]=&g^{ij}\Gamma^{l}_{ik}g^{mn}\Gamma^{k}_{ms}\Gamma^{s}_{nr}\partial\gamma^{r}b_{j}b_{l},\nonumber
\end{align*}
\begin{align*}
[{g^{ij}\Gamma^{k}_{ij}\partial b_{k}}_{\lambda}c^{m}\beta_{m}]=&\left((g^{ij}\Gamma^{k}_{ij})_{,k}\right)\dfrac{\lambda^2}{2}+\left((-1)(g^{ij}\Gamma^{k}_{ij})\beta_{k}\right)\lambda\nonumber\\
&+(g^{ij}\Gamma^{k}_{ij})_{,m}c^{m}\partial b_{k}+(-1)\partial(g^{ij}\Gamma^{k}_{ij})\beta_{k},\nonumber
\end{align*}
\begin{align*}
[{g^{ij}\Gamma^{k}_{ij}\partial b_{k}}_{\lambda}g^{lm}b_{l}\beta_{m}]=&(-1)g^{lm}(g^{ij}\Gamma^{k}_{ij})_{,m}\partial b_{k}b_{l},\nonumber
\end{align*}
\begin{align*}
&[{g^{ij}\Gamma^{k}_{ij}\partial b_{k}}_{\lambda}g^{lm}\Gamma^{n}_{la}c^{a}b_{m}b_{n}]\\
=&\left((-1)g^{ij}\Gamma^{k}_{ij}g^{lm}\Gamma^{n}_{lk}b_{m}b_{n}\right)\lambda+(-1)\partial(g^{ij}\Gamma^{k}_{ij})g^{lm}\Gamma^{n}_{lk}b_{m}b_{n},\nonumber
\end{align*}
\begin{align*}
[{g^{ij}\Gamma^{k}_{ij}\partial b_{k}}_{\lambda}g_{lm}\partial\gamma^{l}c^{m}]=&\left((-1)g^{ij}\Gamma^{k}_{ij}g_{lk}\partial\gamma^{l}\right)\lambda+(-1)\partial(g^{ij}\Gamma^{k}_{ij})g_{lk}\partial\gamma^{l},\nonumber
\end{align*}
\begin{align*}
[{g_{mn}\partial\gamma^{m}c^{n}}_{\lambda}c^{i}\beta_{i}]=&(-1)g_{mn}c^{m}\partial c^{n}+(g_{mn})_{,i}\partial\gamma^{m}c^{i}c^{n},\nonumber
\end{align*}
\begin{align*}
[{g_{mn}\partial\gamma^{m}c^{n}}_{\lambda}\partial\gamma^{i}b_{i}]=&g_{mn}\partial\gamma^{m}\partial\gamma^{n},\nonumber
\end{align*}
\begin{align*}
&[{g_{mn}\partial\gamma^{m}c^{n}}_{\lambda}g^{ij}b_{i}\beta_{j}]\\
=&\dfrac{\lambda^2}{2}+\left(c^{i}b_{i}+(-1)g^{ij}(g_{mi})_{,j}\partial\gamma^{m}+g^{ij}\partial(g_{ij})\right)\lambda\nonumber\\
&+\partial c^{i}b_{i}+\partial\gamma^{j}\beta_{j}+(-1)g^{ij}(g_{mn})_{,j}\partial\gamma^{m}c^{n}b_{i}+(-1)g^{ij}(g_{mi})_{,j}\partial^2\gamma^{m}\nonumber\\
&+\partial(g_{mn})g^{im}c^{n}b_{i}+\dfrac{1}{2}\partial^2(g_{ij})g^{ij}+\partial(g^{ij})(g_{mi})_{,j}\partial\gamma^{m}\nonumber\\
&+\partial(g_{mi})(g^{ij})_{,j}\partial\gamma^{m},\nonumber
\end{align*}
\begin{align*}
[{g_{mn}\partial\gamma^{m}c^{n}}_{\lambda}g^{ij}\Gamma^{l}_{ik}c^{k}b_{j}b_{l}]=&g_{lm}g^{ij}\Gamma^{l}_{ik}\partial\gamma^{m}c^{k}b_{j}+(-1)\Gamma^{l}_{ik}\partial\gamma^{i}c^{k}b_{l},\nonumber
\end{align*}
\begin{align*}
&[{g_{mn}\partial\gamma^{m}c^{n}}_{\lambda}g^{ij}\Gamma^{k}_{ij}\partial b_{k}]\\
=&\left(g_{mk}g^{ij}\Gamma^{k}_{ij}\partial\gamma^{m}\right)\lambda+g_{mk}g^{ij}\Gamma^{k}_{ij}\partial^2\gamma^{m}+\partial(g_{mk})g^{ij}\Gamma^{k}_{ij}\partial\gamma^{m},\nonumber
\end{align*}
\begin{align*}
[{g_{mn}\partial\gamma^{m}c^{n}}_{\lambda}g^{ij}\Gamma^{k}_{il}\Gamma^{l}_{ja}\partial\gamma^{a}b_{k}]=&g_{mn}g^{ij}\Gamma^{n}_{il}\Gamma^{l}_{ja}\partial\gamma^{a}\partial\gamma^{m},\nonumber
\end{align*}
\begin{align*}
&[{g^{ij}\Gamma^{k}_{il}\Gamma^{l}_{ja}\partial\gamma^{a}b_{k}}_{\lambda}c^{m}\beta_{m}]\\
=&\left(g^{ij}\Gamma^{k}_{il}\Gamma^{l}_{jk}\right)\dfrac{\lambda^2}{2}+\left((-1)g^{ij}\Gamma^{k}_{il}\Gamma^{l}_{jm}c^{m}b_{k}\right.\nonumber\\
&\left.+(-1)(g^{ij}\Gamma^{k}_{il}\Gamma^{l}_{ja})_{,k}\partial\gamma^{a}+\partial(g^{ij}\Gamma^{k}_{il}\Gamma^{l}_{jk})\right)\lambda+(-1)g^{ij}\Gamma^{k}_{il}\Gamma^{l}_{jm}c^{m}\partial b_{k}\nonumber\\
&+(g^{ij}\Gamma^{k}_{il}\Gamma^{l}_{ja})\partial\gamma^{a}\beta_{k}+(g^{ij}\Gamma^{k}_{il}\Gamma^{l}_{ja})_{,m}\partial\gamma^{a}c^{m}b_{k}+(-1)(g^{ij}\Gamma^{k}_{il}\Gamma^{l}_{ja})_{,k}\partial^2\gamma^{a}\nonumber\\
&+(-1)\partial(g^{ij}\Gamma^{k}_{il}\Gamma^{l}_{jm})c^{m}b_{k}+\tfrac{1}{2}\partial^2(g^{ij}\Gamma^{k}_{il}\Gamma^{l}_{jk}),\nonumber
\end{align*}
\begin{align*}
&[{g^{ij}\Gamma^{k}_{il}\Gamma^{l}_{ja}\partial\gamma^{a}b_{k}}_{\lambda}g^{mn}b_{m}\beta_{n}]\\
=&\left(g^{ij}\Gamma^{k}_{il}\Gamma^{l}_{ja}g^{ma}b_{k}b_{m}\right)\lambda+g^{ij}\Gamma^{k}_{il}\Gamma^{l}_{ja}g^{ma}\partial b_{k}b_{m}\nonumber\\
&(-1)(g^{ij}\Gamma^{k}_{il}\Gamma^{l}_{ja})_{,n}g^{mn}\partial\gamma^{a}b_{k}b_{m}+ \partial(g^{ij}\Gamma^{k}_{il}\Gamma^{l}_{ja})g^{ma}b_{k}b_{m},\nonumber
\end{align*}
\begin{align*}
[{g^{ij}\Gamma^{k}_{il}\Gamma^{l}_{ja}\partial\gamma^{a}b_{k}}_{\lambda}g^{mn}\Gamma^{r}_{ms}c^{s}b_{n}b_{r}]=&g^{ij}\Gamma^{k}_{il}\Gamma^{l}_{ja}g^{mn}\Gamma^{r}_{mk}\partial\gamma^{a}b_{n}b_{r},\nonumber
\end{align*}
\begin{align*}
[{g^{ij}\Gamma^{k}_{il}\Gamma^{l}_{ja}\partial\gamma^{a}b_{k}}_{\lambda}g^{mn}\partial\gamma^{m}c^{n}]=&g^{ij}\Gamma^{k}_{il}\Gamma^{l}_{ja}g_{mk}\partial\gamma^{a}\partial\gamma^{m}.\nonumber
\end{align*}

\subsubsection{Coefficient of \texorpdfstring{$\lambda^2$}{}}

We prove that it is zero after an easy computation using only that the metric is covariantly constant.

\subsubsection{Coefficient of \texorpdfstring{$\lambda$}{}}\label{coefflambdainG+lambdaG-}
We compute the coefficient of each type of term that appears:
\begin{itemize}
\item \fbox{coefficient $c^{i}b_{j}:$} \hspace{0.1in} $0$,
\begin{align*}
&(-1)(g^{lm}\Gamma^{n}_{la})_{,n}c^{a}b_{m}+(-1)g^{lm}\Gamma^{n}_{la}\Gamma^{a}_{mi}c^{i}b_{n}\nonumber\\
=&g^{sm}\Gamma^{l}_{sn}\Gamma^{n}_{la}c^{a}b_{m}+(-1)g^{lm}(\Gamma^{n}_{la})_{,n}c^{a}b_{m}\nonumber\\
=&g^{sm}(\Gamma^{n}_{sa})_{,n}c^{a}b_{m}+(-1)g^{sm}R_{sa}c^{a}b_{m}+(-1)g^{lm}(\Gamma^{n}_{la})_{,n}c^{a}b_{m}\nonumber\\
=&(-1)g^{sm}R_{sa}c^{a}b_{m}\nonumber\\
=&0,\nonumber
\end{align*}
\noindent
by Lemma (\ref{lemmaG2Ricciflat}).

\item \fbox{coefficient $\partial\gamma^{i}:$}\hspace{0.1in} $0$,
\begin{equation}
(-1)(g^{ij}\Gamma^{k}_{il}\Gamma^{l}_{ja})_{,k}\partial\gamma^{a}+\dfrac{1}{2}\partial(g^{ij}\Gamma^{k}_{il}\Gamma^{l}_{jk}).\nonumber
\end{equation}
Expanding each summand separately:
\begin{align*}
&(-1)(g^{ij}\Gamma^{k}_{il}\Gamma^{l}_{ja})_{,k}\partial\gamma^{a}\\
=&(-1)g^{ij}_{,k}\Gamma^{k}_{il}\Gamma^{l}_{ja}\partial\gamma^{a}+(-1)g^{ij}(\Gamma^{k}_{il})_{,k}\Gamma^{l}_{ja}\partial\gamma^{a}+(-1)g^{ij}\Gamma^{k}_{il}(\Gamma^{l}_{ja})_{,k}\partial\gamma^{a}\\
=&(g^{sj}\Gamma^{i}_{sk}+g^{is}\Gamma^{j}_{sk})\Gamma^{k}_{il}\Gamma^{l}_{ja}\partial\gamma^{a}+(-1)g^{ij}(\Gamma^{m}_{ik}\Gamma^{k}_{lm})\Gamma^{l}_{ja}\partial\gamma^{a}+(-1)g^{ij}R_{il}\Gamma^{l}_{ja}\partial\gamma^{a}\\
&+(-1)g^{ij}\Gamma^{k}_{il}\left[R^{l}_{jka}+(\Gamma^{l}_{jk})_{,a}-\Gamma^{l}_{ks}\Gamma^{s}_{ja}+\Gamma^{l}_{as}\Gamma^{s}_{jk}\right]\partial\gamma^{a}\\
=&(-1)g^{ij}R_{il}\Gamma^{l}_{ja}\partial\gamma^{a}+(-1)g^{ij}\Gamma^{k}_{il}(\Gamma^{l}_{jk})_{,a}\partial\gamma^{a}+g^{ij}\Gamma^{k}_{il}\Gamma^{l}_{ks}\Gamma^{s}_{ja}\partial\gamma^{a}.
\end{align*}
Above, to cancel out the summand involving the curvature tensor we used that the Riemann's curvature tensor is antisymmetric in the first two indices (\ref{symmetriesoftheRiemannTensor}).
\begin{align*}
&\dfrac{1}{2}\partial(g^{ij}\Gamma^{k}_{il}\Gamma^{l}_{jk})\\
=&\dfrac{1}{2}g^{ij}_{,a}\Gamma^{k}_{il}\Gamma^{l}_{jk}\partial\gamma^{a}+\dfrac{1}{2}g^{ij}(\Gamma^{k}_{il})_{,a}\Gamma^{l}_{jk}\partial\gamma^{a}+\dfrac{1}{2}g^{ij}\Gamma^{k}_{il}(\Gamma^{l}_{jk})_{,a}\partial\gamma^{a}\nonumber\\
=&\dfrac{1}{2}\left(-g^{sj}\Gamma^{i}_{sa}-g^{is}\Gamma^{j}_{sa}\right)\Gamma^{k}_{il}\Gamma^{l}_{jk}\partial\gamma^{a}+g^{ij}\Gamma^{k}_{il}(\Gamma^{l}_{jk})_{,a}\partial\gamma^{a}\nonumber\\
=&(-1)g^{sj}\Gamma^{i}_{sa}\Gamma^{k}_{il}\Gamma^{l}_{jk}\partial\gamma^{a}+g^{ij}\Gamma^{k}_{il}(\Gamma^{l}_{jk})_{,a}\partial\gamma^{a}.\nonumber
\end{align*}
Combining the two expansions and using Lemma \ref{lemmaG2Ricciflat} we get the desired result.
\end{itemize}

\subsubsection{Coefficient of \texorpdfstring{$\lambda^0$}{}}

We compute the coefficient of each type of term that appears:
\begin{itemize}
\item \fbox{coefficient $\partial c^{i}b_{j}:$} \hspace{0.1in} $0$,
\begin{align*}
&\dfrac{1}{4}(g^{lm}_{,i})_{,m}\partial c^{i}b_{l}+(-\dfrac{1}{2})g^{lm}\Gamma^{n}_{la}\Gamma^{a}_{mi}\partial c^{i}b_{n}+(-\dfrac{1}{4})(g^{ij}\Gamma^{m}_{ik})_{,m}\partial c^{k}b_{j}\\
&+\dfrac{1}{4}(g^{im}\Gamma^{l}_{ik})_{,m}\partial c^{k}b_{l}\nonumber\\
=&(-\dfrac{1}{2})g^{lm}\Gamma^{n}_{la}\Gamma^{a}_{mi}\partial c^{i}b_{n}+(-\dfrac{1}{2})(g^{ij}\Gamma^{m}_{ik})_{,m}\partial c^{k}b_{j}\nonumber\\
=&\dfrac{1}{2}g^{aj}\Gamma^{i}_{ma}\Gamma^{m}_{ik}\partial c^{k}b_{j}+(-\dfrac{1}{2})g^{ij}(\Gamma^{m}_{ik})_{,m}\partial c^{k}b_{j}\nonumber\\
=&(-\dfrac{1}{2})g^{sj}R_{sk}\partial c^{k}b_{j}\nonumber\\
=&0,\nonumber
\end{align*}
\noindent
by Lemma \ref{lemmaG2Ricciflat}.

\item \fbox{coefficient $\partial\gamma^{i}\beta_{j}:$} \hspace{0.1in} $0$.

\item \fbox{coefficient $c^{i}b_{j}\beta_{k}:$} \hspace{0.1in} $0$.

\item \fbox{coefficient $c^{i}\partial b_{j}:$}\hspace{0.1in} $0$,
\begin{align*}
&(-\dfrac{1}{2})g^{lm}\Gamma^{n}_{la}\Gamma^{a}_{mi} c^{i}\partial b_{n}+(-\dfrac{1}{2})(g^{ij}\Gamma^{m}_{ik})_{,m} c^{k}\partial b_{j}\nonumber\\
=&\dfrac{1}{2}g^{aj}\Gamma^{i}_{ma}\Gamma^{m}_{ik} c^{k}\partial b_{j}+(-\dfrac{1}{2})(g^{ij}\Gamma^{m}_{ik})_{,m} c^{k}\partial b_{j}\nonumber\\
=&(-\dfrac{1}{2})g^{sj}R_{sk} c^{k}\partial b_{j}\nonumber\\
=&0,\nonumber
\end{align*}
\noindent
by Lemma \ref{lemmaG2Ricciflat}.

\item \fbox{coefficient $c^{i}c^{j}b_{m}b_{n}:$}\hspace{0.1in} $0$.

\item \fbox{coefficient $\partial\gamma^{i}c^{j}b_{k}:$} \hspace{0.1in} $0$,

using that the metric is covariantly constant we get:
\begin{align*}
&(-\dfrac{1}{2})\partial\left[(g^{lk}\Gamma^{n}_{lm})_{,n}\right]c^{m}b_{k}+(-\dfrac{1}{2})\partial(g^{ij}\Gamma^{k}_{il}\Gamma^{l}_{jm})c^{m}b_{k}\nonumber\\
=&(-\dfrac{1}{2})\partial \left[(g^{lk}\Gamma^{n}_{lm})_{,n}+g^{ij}\Gamma^{k}_{il}\Gamma^{l}_{jm}\right]c^{m}b_{k}\nonumber\\
=&(-\dfrac{1}{2})\partial\left[g^{sm}R_{sa}\right]c^{a}b_{m}\nonumber\\
=&0,\nonumber
\end{align*}
\noindent
by Lemma \ref{lemmaG2Ricciflat}.

\item \fbox{coefficient $\partial\beta_{m}:$}\hspace{0.1in} $0$.

\item \fbox{coefficient $\partial c^{i}c^{j}:$}\hspace{0.1in} $0$.

\item \fbox{coefficient $\partial\gamma^{i} c^{j}c^{k}:$}\hspace{0.1in} $0$.

\item \fbox{coefficient $ b^{i}b^{j}\beta^{k}:$} \hspace{0.1in} $0$,

using that the metric is covariantly constant we get:
\begin{eqnarray}
(-\dfrac{1}{2})\left(g^{lm}g^{ia}\Gamma^{j}_{am}\right)b_{i}b_{l}\beta_{j}=0,\nonumber 
\end{eqnarray}
because the factor between parentheses is symmetric in $i$ and $l$.

Due to the quasi-associativity (\ref{quasi-associativity}) we obtain some terms of type $\partial\gamma^{i}b_{j}b_{k}$:
\begin{align}\label{restfrombbbeta}
Q_{1}=&(-\dfrac{1}{2})\partial(g^{lm})\left(g^{aj}\Gamma^{i}_{am}\right)_{,j}b_{i}b_{l}+(-\dfrac{1}{2})g^{lm}_{,j}\partial\left(g^{aj}\Gamma^{i}_{am}\right)b_{i}b_{l}\nonumber\\
&+(-\dfrac{1}{2})\partial(g^{lm})\left(g^{ia}\Gamma^{j}_{am}\right)_{,j}b_{i}b_{l}+(-\dfrac{1}{2})g^{lm}_{,j}\partial\left(g^{ia}\Gamma^{j}_{am}\right)b_{i}b_{l}\nonumber\\
&+\dfrac{1}{2}\partial(g^{lm})\left(g^{ij}\Gamma^{a}_{il}\right)_{,m}b_{a}b_{j}+\dfrac{1}{2}g^{lm}_{,m}\partial\left(g^{ij}\Gamma^{a}_{il}\right)b_{a}b_{j}.
\end{align}
\item \fbox{coefficient $ \gamma^{i}\gamma^{j}:$} \hspace{0.1in} $0$,

using that the metric is covariantly constant we get:
\begin{align*}
&(-\dfrac{1}{2})\partial[(g^{lm}\Gamma^{n}_{la}\Gamma^{a}_{mr})_{,n}]\partial\gamma^{r}+\dfrac{1}{4}\partial^{2}(g^{ij}\Gamma^{k}_{il}\Gamma^{l}_{jk})+(-\dfrac{1}{8})g_{ij}\partial^{2}(g^{ij})\nonumber\\
&+(-\dfrac{1}{8})\partial^{2}(g_{ij})g^{ij}+\dfrac{1}{2}g^{rm}g_{al}\Gamma^{l}_{rs}\Gamma^{a}_{im}\partial\gamma^{s}\partial\gamma^{i}+\dfrac{1}{2}\Gamma^{m}_{rs}\Gamma^{r}_{mi}\partial\gamma^{s}\partial\gamma^{i}\nonumber\\
=&0.\nonumber
\end{align*}
Here we used the following three identities
$$0=(-\dfrac{1}{8})\partial^{2}(g_{ij}g^{ij})=(-\dfrac{1}{8})\partial^{2}g_{ij}g^{ij}+(-\dfrac{1}{4})\partial g_{ij}\partial g^{ij}+(-\dfrac{1}{8})g_{ij}\partial^{2}g^{ij},$$
$$\dfrac{1}{4}\partial g_{ij}\partial g^{ij}=(-\dfrac{1}{2})\Gamma^{l}_{is}\Gamma^{i}_{lr}\partial\gamma^{s}\partial\gamma^{r}+(-\dfrac{1}{2})g_{aj}g^{il}\Gamma^{a}_{is}\Gamma^{j}_{lr}\partial\gamma^{s}\partial\gamma^{r},$$
$$0=(-\dfrac{1}{2})\partial[(g^{lm}\Gamma^{n}_{la}\Gamma^{a}_{mr})_{,n}]\partial\gamma^{r}+(-\dfrac{1}{2})(g^{lm}\Gamma^{n}_{la}\Gamma^{a}_{mr})_{,n}\partial^{2}\gamma^{r}+\dfrac{1}{4}\partial^{2}(g^{ij}\Gamma^{k}_{il}\Gamma^{l}_{jk}).$$
The last identity follows taking the derivative of the equality:

$0=(-1)(g^{ij}\Gamma^{k}_{il}\Gamma^{l}_{ja})_{,k}\partial\gamma^{a}+\dfrac{1}{2}\partial(g^{ij}\Gamma^{k}_{il}\Gamma^{l}_{jk})$ \hspace{0.05in}(this is exactly the coefficient of $\partial\gamma^{i}$ in the terms with $\lambda$ in the subsection \ref{coefflambdainG+lambdaG-} above).

Due to the quasi-associativity (\ref{quasi-associativity}) we obtain some terms of type $\partial^{2}\gamma^{i}$:
\begin{equation}\label{restfromdergammadergamma}
Q_{2}=\dfrac{1}{2}(g^{ij}\Gamma^{k}_{il}\Gamma^{l}_{ja})_{,k}\partial^{2}\gamma^{a}.
\end{equation}
\item \fbox{coefficient $\partial^{2}\gamma^{i} :$} \hspace{0.1in} $0$,

just using that the metric is covariantly constant and taking into account the term (\ref{restfromdergammadergamma}) that comes from the coefficient of $\gamma^{i}\gamma^{j}$.

\item \fbox{coefficient $ c^{i}b^{j}b^{k}b^{l}:$} \hspace{0.1in} $0$,
\begin{align*}
&(-\dfrac{1}{2})g^{lm}g^{ij}(\Gamma^{a}_{ik})_{,m}c^{k}b_{a}b_{j}b_{l}+(-\dfrac{1}{2})g^{ij}g^{mn}\Gamma^{l}_{ik}\Gamma^{k}_{ma}c^{a}b_{j}b_{l}b_{n}\\
&+\dfrac{1}{2}g^{lm}g^{sj}\Gamma^{i}_{sm}\Gamma^{a}_{ik}c^{k}b_{a}b_{j}b_{l}\nonumber\\
=&\dfrac{1}{2}g^{lm}g^{ij}\tensor{R}{^a_i_k_m}c^{k}b_{a}b_{j}b_{l}+(-\dfrac{1}{2})g^{lm}g^{ij}(\Gamma^{a}_{im})_{,k}c^{k}b_{a}b_{j}b_{l}\\
&+(-\dfrac{1}{2})g^{lm}g^{ij}\Gamma^{a}_{kx}\Gamma^{x}_{im}c^{k}b_{a}b_{j}b_{l}+\dfrac{1}{2}g^{lm}g^{ij}\Gamma^{a}_{mx}\Gamma^{x}_{ik}c^{k}b_{a}b_{j}b_{l}\\
&+(-\dfrac{1}{2})g^{ij}g^{mn}\Gamma^{l}_{ik}\Gamma^{k}_{ma}c^{a}b_{j}b_{l}b_{n}+\dfrac{1}{2}g^{lm}g^{sj}\Gamma^{i}_{sm}\Gamma^{a}_{ik}c^{k}b_{a}b_{j}b_{l}\nonumber\\
=&-\dfrac{1}{2}\tensor{R}{^a^j^l_k}c^{k}b_{a}b_{j}b_{l}\nonumber\\
=&-\dfrac{1}{2}R^{[ajl]}_{\;\;\;\;\;\;\;k}c^{k}b_{a}b_{j}b_{l}\nonumber\\
=&0.\nonumber
\end{align*}
Here [\;\;\;] denotes the anti-symmetrization of the indices, the last equality follows from the Bianchi's first identity (\ref{firstBianchiidentity}) which implies that $R^{[ajl]}_{\;\;\;\;\;\;\;k}=0$. 

\item \fbox{coefficient $ \partial b_{i}b_{j}:$} \hspace{0.1in} $0$,

denote by $A_{1}$ terms containing derivatives of the Christoffel symbols are:
\small
\begin{equation}
A_{1}=\dfrac{1}{2}g^{lm}g^{ia}(\Gamma^{j}_{am})_{,j}b_{i}\partial b_{l}+(-\dfrac{1}{2})g^{lm}g^{ij}(\Gamma^{k}_{ij})_{,m}b_{l}\partial b_{k}+\dfrac{1}{2}g^{ij}g^{am}(\Gamma^{l}_{aj})_{,m}b_{i}\partial b_{l},\nonumber
\end{equation}
\normalsize
denote by $A_{2}$ the sum of terms that doesn't contain derivatives of the Christoffel symbols:
\begin{equation}
A_{2}=(-\dfrac{1}{2})\tensor{\Gamma}{_i^j^i}\tensor{\Gamma}{_j^l^a}b_{a}\partial b_{l}+(-\dfrac{1}{2})\tensor{\Gamma}{_i^j^l}\tensor{\Gamma}{^a^i_j}b_{a}\partial b_{l}+(\dfrac{1}{2})\tensor{\Gamma}{_i^j^a}\tensor{\Gamma}{^i^l_j}b_{a}\partial b_{l}.\nonumber
\end{equation}
Replacing the derivatives of the Christoffel symbol in $A_{1}$ using (\ref{RiemanncurvatureChristoffel}) and (\ref{RiccitensorChristoffel}) we get:
\begin{align*}
A_{1}=&\dfrac{1}{2}g^{lm}g^{ia}\tensor{R}{_a_m}b_{i}\partial b_{l}+\dfrac{1}{2}g^{lm}g^{ia}\Gamma^{x}_{ay}\Gamma^{y}_{mx}b_{i}\partial b_{l}\\
&+\dfrac{1}{2}g^{lm}g^{ij}\tensor{R}{^k_i_j_m}b_{l}\partial b_{k}+(-\dfrac{1}{2})g^{lm}g^{ij}(\Gamma^{k}_{im})_{,j}b_{l}\partial b_{k}\\
&+(-\dfrac{1}{2})g^{lm}g^{ij}\Gamma^{k}_{js}\Gamma^{s}_{im}b_{l}\partial b_{k}+\dfrac{1}{2}g^{lm}g^{ij}\Gamma^{k}_{ms}\Gamma^{s}_{ij}b_{l}\partial b_{k}\\
&+\dfrac{1}{2}g^{ij}g^{am}\tensor{R}{^l_j_a_m}b_{i}\partial b_{l}+\dfrac{1}{2}g^{ij}g^{am}(\Gamma^{l}_{jm})_{,a}b_{i}\partial b_{l}\\
&+\dfrac{1}{2}g^{ij}g^{am}\Gamma^{l}_{as}\Gamma^{s}_{jm}b_{i}\partial b_{l}+(-\dfrac{1}{2})g^{ij}g^{am}\Gamma^{l}_{ms}\Gamma^{s}_{ja}b_{i}\partial b_{l}\\
=&\dfrac{1}{2}g^{lm}g^{ia}\tensor{R}{_a_m}b_{i}\partial b_{l}+\dfrac{1}{2}g^{lm}g^{ia}\Gamma^{x}_{ay}\Gamma^{y}_{mx}b_{i}\partial b_{l}\\
&+\dfrac{1}{2}g^{lm}g^{ij}\tensor{R}{^k_i_j_m}b_{l}\partial b_{k}+\dfrac{1}{2}g^{lm}g^{ij}\Gamma^{k}_{ms}\Gamma^{s}_{ij}b_{l}\partial b_{k}\\
&+\dfrac{1}{2}g^{ij}g^{am}\tensor{R}{^l_j_a_m}b_{i}\partial b_{l}+(-\dfrac{1}{2})g^{ij}g^{am}\Gamma^{l}_{ms}\Gamma^{s}_{ja}b_{i}\partial b_{l},
\end{align*}
then
\begin{align*}
A_{1}+A_{2}=&\dfrac{1}{2}g^{lm}g^{ia}\tensor{R}{_a_m}b_{i}\partial b_{l}+\dfrac{1}{2}g^{lm}g^{ij}\tensor{R}{^k_i_j_m}b_{l}\partial b_{k}+\dfrac{1}{2}g^{ij}g^{am}\tensor{R}{^l_j_a_m}b_{i}\partial b_{l}\nonumber\\
=&\dfrac{1}{2}g^{ij}g^{am}\tensor{R}{^l_j_a_m}b_{i}\partial b_{l}\nonumber\\
=&0,\nonumber
\end{align*}
using the symmetries (\ref{symmetriesoftheRiemannTensor}).

\item \fbox{coefficient $ \partial\gamma^{i} b_{j}b_{k}:$} \hspace{0.1in} $0$,

due to the quasi-associativity in the computations of the $ b^{i}b^{j}\beta^{k}$ coefficient we need to take into account (\ref{restfrombbbeta}).

Collecting the terms that contain $\partial\gamma^{i} b_{j}b_{k}$ : denote by $A_{1}$ the sum of the terms that does not contain derivative of the Christoffel symbols, denote by $A_{2}$ the sum of the terms containing derivative of the Christoffel symbols.
We have:
\begin{align*}
A_{1}=&\dfrac{1}{2}g^{lm}_{,j}g^{aj}_{,s}\Gamma^{i}_{am}\partial\gamma^{s}b_{i}b_{l}+g^{lm}g^{ij}_{,m}\Gamma^{k}_{in}\Gamma^{n}_{ja}\partial\gamma^{a}b_{k}b_{l}\\
&+(-1)g^{ij}g^{mn}\Gamma^{l}_{ik}\Gamma^{k}_{ms}\Gamma^{s}_{nr}\partial\gamma^{r}b_{j}b_{l}\\
=&\dfrac{1}{2}g^{lx}g^{ak}\Gamma^{m}_{xj}\Gamma^{j}_{ks}\Gamma^{i}_{am}\partial\gamma^{s}b_{i}b_{l}+(-\dfrac{1}{2})g^{lm}g^{xj}\Gamma^{i}_{xm}\Gamma^{k}_{in}\Gamma^{n}_{ja}\partial\gamma^{a}b_{k}b_{l}\nonumber\\
&+(-1)g^{lm}g^{ix}\Gamma^{j}_{xm}\Gamma^{k}_{in}\Gamma^{n}_{ja}\partial\gamma^{a}b_{k}b_{l}+(-1)g^{ij}g^{mn}\Gamma^{l}_{ik}\Gamma^{k}_{ms}\Gamma^{s}_{nr}\partial\gamma^{r}b_{j}b_{l},\nonumber
\end{align*}
\begin{align*}
A_{2}=&g^{rm}g^{ia}\Gamma^{l}_{rj}(\Gamma^{j}_{am})_{,s}\partial\gamma^{s}b_{i}b_{l}+(-\dfrac{1}{2})g^{rm}g^{ij}\Gamma^{l}_{rs}(\Gamma^{a}_{il})_{,m}\partial\gamma^{s}b_{a}b_{j}\nonumber\\
&+(-\dfrac{1}{2})g^{lr}g^{ij}\Gamma^{m}_{rs}(\Gamma^{a}_{il})_{,m}\partial\gamma^{s}b_{a}b_{j}+g^{lm}g^{ij}\Gamma^{k}_{in}(\Gamma^{n}_{ja})_{,m}\partial\gamma^{a}b_{k}b_{l}\nonumber\\
&+g^{lm}g^{ij}\Gamma^{n}_{ja}(\Gamma^{k}_{in})_{,m}\partial\gamma^{a}b_{k}b_{l}.\nonumber
\end{align*}
Substituting the derivative of the Christoffel symbols in the second, third and fourth summand of $A_{2}$ by the Riemann curvature tensor (\ref{RiemanncurvatureChristoffel}) we get:
\begin{align*}
A_{2}=&g^{rm}g^{ia}\Gamma^{l}_{rj}(\Gamma^{j}_{am})_{,s}\partial\gamma^{s}b_{i}b_{l}+\dfrac{1}{2}g^{rm}g^{ij}\Gamma^{l}_{rs}\tensor{R}{^a_l_i_m}\partial\gamma^{s}b_{a}b_{j}\\
&+(-\dfrac{1}{2})g^{rm}g^{ij}\Gamma^{l}_{rs}(\Gamma^{a}_{lm})_{,i}\partial\gamma^{s}b_{a}b_{j}+(-\dfrac{1}{2})g^{rm}g^{ij}\Gamma^{l}_{rs}\Gamma^{a}_{ix}\Gamma^{x}_{lm}\partial\gamma^{s}b_{a}b_{j}\\
&+\dfrac{1}{2}g^{rm}g^{ij}\Gamma^{l}_{rs}\Gamma^{a}_{mx}\Gamma^{x}_{li}\partial\gamma^{s}b_{a}b_{j}+\dfrac{1}{2}g^{lr}g^{ij}\Gamma^{m}_{rs}\tensor{R}{^a_l_i_m}\partial\gamma^{s}b_{a}b_{j}\\
&+(-\dfrac{1}{2})g^{lr}g^{ij}\Gamma^{m}_{rs}(\Gamma^{a}_{lm})_{,i}\partial\gamma^{s}b_{a}b_{j}+(-\dfrac{1}{2})g^{lr}g^{ij}\Gamma^{m}_{rs}\Gamma^{a}_{ix}\Gamma^{x}_{lm}\partial\gamma^{s}b_{a}b_{j}\\
&+\dfrac{1}{2}g^{lr}g^{ij}\Gamma^{m}_{rs}\Gamma^{a}_{mx}\Gamma^{x}_{li}\partial\gamma^{s}b_{a}b_{j}+(-1)g^{lm}g^{ij}\Gamma^{k}_{in}\tensor{R}{^n_j_a_m}\partial\gamma^{a}b_{k}b_{l}\\
&+g^{lm}g^{ij}\Gamma^{k}_{in}(\Gamma^{n}_{jm})_{,a}\partial\gamma^{a}b_{k}b_{l}+g^{lm}g^{ij}\Gamma^{k}_{in}\Gamma^{n}_{ax}\Gamma^{x}_{jm}\partial\gamma^{a}b_{k}b_{l}\\
&+(-1)g^{lm}g^{ij}\Gamma^{k}_{in}\Gamma^{n}_{mx}\Gamma^{x}_{ja}\partial\gamma^{a}b_{k}b_{l}+g^{lm}g^{ij}\Gamma^{n}_{ja}(\Gamma^{k}_{in})_{,m}\partial\gamma^{a}b_{k}b_{l}\\
=&(-1)g^{rm}g^{ij}\Gamma^{l}_{rs}\Gamma^{a}_{ix}\Gamma^{x}_{lm}\partial\gamma^{s}b_{a}b_{j}+(-\dfrac{1}{2})g^{lm}g^{ij}\Gamma^{k}_{in}\Gamma^{n}_{mx}\Gamma^{x}_{ja}\partial\gamma^{a}b_{k}b_{l}\\
&+\dfrac{1}{2}g^{lr}g^{ij}\Gamma^{m}_{rs}\Gamma^{a}_{mx}\Gamma^{x}_{li}\partial\gamma^{s}b_{a}b_{j}+g^{lm}g^{ij}\Gamma^{k}_{in}\Gamma^{n}_{ax}\Gamma^{x}_{jm}\partial\gamma^{a}b_{k}b_{l}.
\end{align*}
Finally we check that $A_{1}+A_{2}=0.$
\end{itemize}
\noindent
We conclude that $[{G_{+}}_{\lambda}G_{-}]=0$.

\subsection{\texorpdfstring{$[{G_{+}}_{\lambda} \Phi_{-}]$}{} and \texorpdfstring{$[{G_{-}}_{\lambda} \Phi_{+}]$}{}}\label{G+withPhi-}

Now we compute $[{G_{+}}_{\lambda} \Phi_{-}]$, $[{G_{-}}_{\lambda} \Phi_{+}]$ is computed similarly.
\begin{align*}
G_{+}=&\dfrac{1}{2}c^{i}\beta_{i}+\dfrac{1}{2}\partial\gamma^{i}b_{i}+\dfrac{1}{2}g^{ij}b_{i}\beta_{j}+\dfrac{1}{2}g^{ij}\Gamma^{l}_{ik}c^{k}b_{j}b_{l}+\dfrac{1}{2}g^{ij}\Gamma^{k}_{ij}\partial b_{k}\nonumber\\
&+\dfrac{1}{2}g_{ij}\partial\gamma^{i}c^{j}+g^{ij}\Gamma^{k}_{il}\Gamma^{l}_{jm}\partial\gamma^{m}b_{k},\nonumber
\end{align*}
\begin{align*}
\Phi_{-}=&\frac{i}{12\sqrt{2}}\varphi_{ijk} c^{i}c^{j}c^{k}+ \frac{-i}{4\sqrt{2}}\varphi_{ijk}g^{il}c^{j}c^{k}b_{l}+  \frac{i}{4\sqrt{2}}\varphi_{ijk} g^{il}g^{jm}c^{k} b_{l}b_{m}\nonumber\\
&+\frac{-i}{12\sqrt{2}}\varphi_{ijk} g^{il}g^{jm}g^{kn}b_{l}b_{m}b_{n}+\frac{i}{2\sqrt{2}}\varphi_{ijk}\Gamma^{i}_{mn}g^{jm}\partial \gamma^{n}g^{kl}b_{l}\nonumber\\
&-\frac{i}{2\sqrt{2}}\varphi_{ijk}\Gamma^{i}_{mn}g^{jm}\partial \gamma^{n}c^{k}.\nonumber
\end{align*}
We list the  non-zero $\lambda$-brackets between the summands of $G_{+}$ and the summands of $\Phi_{-}$, to compute these we used the $\mathtt{Mathematica}$ package \cite{Thielemans91}.
\begin{align*}
[{c^{i}\beta_{i}}_{\lambda}\varphi_{lmn}c^{l}c^{m}c^{n}]=&\varphi_{lmn,i}c^{i}c^{l}c^{m}c^{n}\nonumber\\
=&\left(\varphi_{amn}\Gamma^{a}_{il}+\varphi_{lan}\Gamma^{a}_{im}+\varphi_{lma}\Gamma^{a}_{in}\right)c^{i}c^{l}c^{m}c^{n}\nonumber\\
=&0,\nonumber
\end{align*}
\begin{align*}
[{c^{i}\beta_{i}}_{\lambda}\tensor{\varphi}{^l_m_n}c^{m}c^{n}b_{l}]=&\left(\tensor{\varphi}{^l_m_n_{,l}}c^{m}c^{n}\right)\lambda+\tensor{\varphi}{^l_m_n}c^{m}c^{n}\beta_{l}\nonumber\\
&+\tensor{\varphi}{^l_m_n_{,i}}c^{i}c^{m}c^{n}b_{l}+\partial\left(\tensor{\varphi}{^l_m_n_{,l}}\right)c^{m}c^{n},\nonumber
\end{align*}
\begin{align*}
[{c^{i}\beta_{i}}_{\lambda}\tensor{\varphi}{^l^m_n}c^{n}b_{l}b_{m}]=&2\left(\tensor{\varphi}{^l^m_n_{,m}}c^{n}b_{l}\right)\lambda+2\tensor{\varphi}{^l^m_n}c^{n}b_{l}\beta_{m}\nonumber\\
&+\tensor{\varphi}{^l^m_n_{,i}}c^{i}c^{n}b_{l}b_{m}+2\partial\left(\tensor{\varphi}{^l^m_n_{,m}}\right)c^{n}b_{l},\nonumber
\end{align*}
\begin{align*}
[{c^{i}\beta_{i}}_{\lambda}\tensor{\varphi}{^l^m^n}b_{l}b_{m}b_{n}]=&3\left(\tensor{\varphi}{^l^m^n_{,n}}b_{l}b_{m}\right)\lambda+3\tensor{\varphi}{^l^m^n}b_{l}b_{m}\beta_{n}\nonumber\\
&+\tensor{\varphi}{^l^m^n_{,i}}c^{i}b_{l}b_{m}b_{n}+3\partial\left(\tensor{\varphi}{^l^m^n_{,n}}\right)b_{l}b_{m},\nonumber
\end{align*}
\begin{align*}
[{c^{i}\beta_{i}}_{\lambda}\tensor{\varphi}{_l^m^n}\Gamma^{l}_{ms}\partial\gamma^{s}b_{n}]=&\left(\tensor{\varphi}{_l^m^n}\Gamma^{l}_{ms}c^{s}b_{n}+\left(\tensor{\varphi}{_l^m^n}\Gamma^{l}_{ms}\right)_{,n}\partial\gamma^{s}\right)\lambda\nonumber\\
&+\tensor{\varphi}{_l^m^n}\Gamma^{l}_{ms}\partial c^{s}b_{n}+\left(\tensor{\varphi}{_l^m^n}\Gamma^{l}_{ms}\right)\partial\gamma^{s}\beta_{n}\nonumber\\
&+\left(\tensor{\varphi}{_l^m^n}\Gamma^{l}_{ms}\right)_{,i}\partial\gamma^{s}c^{i}b_{n}+\partial[\left(\tensor{\varphi}{_l^m^n}\Gamma^{l}_{ms}\right)_{,n}]\partial\gamma^{s},\nonumber
\end{align*}
\begin{align*}
[{c^{i}\beta_{i}}_{\lambda}\tensor{\varphi}{_l^m_n}\Gamma^{l}_{ms}\partial\gamma^{s}c^{n}]=&\left(\tensor{\varphi}{_l^m_n}\Gamma^{l}_{ms}c^{s}c^{n}\right)\lambda+\tensor{\varphi}{_l^m_n}\Gamma^{l}_{ms}\partial c^{s}c^{n}\nonumber\\
&+\left(\tensor{\varphi}{_l^m_n}\Gamma^{l}_{ms}\right)_{,i}\partial\gamma^{s}c^{i}c^{n},\nonumber
\end{align*}
\begin{align*}
[{\partial\gamma^{i}b_{i}}_{\lambda}\tensor{\varphi}{_l_m_n}c^{l}c^{m}c^{n}]=&3\tensor{\varphi}{_l_m_n}\partial\gamma^{n}c^{l}c^{m},\nonumber
\end{align*}
\begin{align*}
[{\partial\gamma^{i}b_{i}}_{\lambda}\tensor{\varphi}{^l_m_n}c^{m}c^{n}b_{l}]=&2\tensor{\varphi}{^l_m_n}\partial\gamma^{m}c^{n}b_{l},\nonumber
\end{align*}
\begin{align*}
[{\partial\gamma^{i}b_{i}}_{\lambda}\tensor{\varphi}{^l^m_n}c^{n}b_{l}b_{m}]=&\tensor{\varphi}{^l^m_n}\partial\gamma^{n}b_{l}b_{m},\nonumber
\end{align*}
\begin{align*}
[{\partial\gamma^{i}b_{i}}_{\lambda}\tensor{\varphi}{_l^m_n}\Gamma^{l}_{ms}\partial\gamma^{s}c^{n}]=&\tensor{\varphi}{_l^m_n}\Gamma^{l}_{ms}\partial\gamma^{n}\partial\gamma^{s},\nonumber
\end{align*}
\begin{align*}
[{g^{ij}b_{i}\beta_{j}}_{\lambda}\tensor{\varphi}{_l_m_n}c^{l}c^{m}c^{n}]=&3\left(g^{ni}\tensor{\varphi}{_l_m_n_{,i}}c^{l}c^{m}\right)\lambda+3\left(g^{ni}\tensor{\varphi}{_l_m_n}\right)c^{l}c^{m}\beta_{i}\nonumber\\
&+(-1)g^{ij}\tensor{\varphi}{_l_m_n_{,j}}c^{l}c^{m}c^{n}b_{i}+3g^{ni}\partial[\tensor{\varphi}{_l_m_n_{,i}}]c^{l}c^{m}\nonumber\\
&+6\partial(g^{ni})\tensor{\varphi}{_l_m_n_{,i}}c^{l}c^{m}+3\partial\left(\tensor{\varphi}{_l_m_n}\right)g^{ni}_{,i}c^{l}c^{m},\nonumber
\end{align*}
\begin{align*}
[{g^{ij}b_{i}\beta_{j}}_{\lambda}\tensor{\varphi}{^l_m_n}c^{m}c^{n}b_{l}]=&(-2)\left(g^{ni}\tensor{\varphi}{^l_m_n_{,i}}c^{m}b_{l}\right)\lambda+2\left(g^{mi}\tensor{\varphi}{^l_m_n}\right)c^{n}b_{l}\beta_{i}\nonumber\\
&+g^{ij}\tensor{\varphi}{^l_m_n_{,j}}c^{m}c^{n}b_{i}b_{l}+2g^{mi}\partial[\tensor{\varphi}{^l_m_n_{,i}}]c^{n}b_{l}\nonumber\\
&+4\partial(g^{mi})\tensor{\varphi}{^l_m_n_{,i}}c^{n}b_{l}+2\partial\left(\tensor{\varphi}{^l_m_n}\right)g^{mi}_{,i}c^{n}b_{l},\nonumber
\end{align*}
\begin{align*}
[{g^{ij}b_{i}\beta_{j}}_{\lambda}\tensor{\varphi}{^l^m_n}c^{n}b_{l}b_{m}]=&\left(g^{ni}\tensor{\varphi}{^l^m_n_{,i}}b_{l}b_{m}\right)\lambda+\left(g^{ni}\tensor{\varphi}{^l^m_n}\right)b_{l}b_{m}\beta_{i}\nonumber\\
&+(-1)g^{ij}\tensor{\varphi}{^l^m_n_{,j}}c^{n}b_{i}b_{l}b_{m}+g^{ni}\partial\left(\tensor{\varphi}{^l^m_n_{,i}}\right)b_{l}b_{m}\nonumber\\
&+2\partial\left(g^{ni}\right)\tensor{\varphi}{^l^m_n_{,i}}b_{l}b_{m}+\partial\left(\tensor{\varphi}{^l^m_n}\right)g^{ni}_{,i}b_{l}b_{m},\nonumber
\end{align*}
\begin{align*}
[{g^{ij}b_{i}\beta_{j}}_{\lambda}\tensor{\varphi}{^l^m^n}b_{l}b_{m}b_{n}]=&g^{ij}\tensor{\varphi}{^l^m^n_{,j}}b_{i}b_{l}b_{m}b_{n},\nonumber
\end{align*}
\begin{align*}
[{g^{ij}b_{i}\beta_{j}}_{\lambda}\tensor{\varphi}{_l^m^n}\Gamma^{l}_{ms}\partial\gamma^{s}b_{n}]=&\left(g^{ij}\tensor{\varphi}{_l^m^n}\Gamma^{l}_{mj}b_{i}b_{n}\right)\lambda+g^{ij}\tensor{\varphi}{_l^m^n}\Gamma^{l}_{mj}\partial b_{i}b_{n}\nonumber\\
&+g^{ij}\left(\tensor{\varphi}{_l^m^n}\Gamma^{l}_{ms}\right)_{,j}\partial\gamma^{s}b_{i}b_{n}+\partial(g^{ij})\tensor{\varphi}{_l^m^n}\Gamma^{l}_{mj}b_{i}b_{n},\nonumber
\end{align*}
\begin{align*}
&[{g^{ij}b_{i}\beta_{j}}_{\lambda}\tensor{\varphi}{_l^m_n}\Gamma^{l}_{ms}\partial\gamma^{s}c^{n}]\\
=&\left((-1)g^{ij}\tensor{\varphi}{_l^m_n}\Gamma^{l}_{mj}c^{n}b_{i}+g^{nj}(\tensor{\varphi}{_l^m_n}\Gamma^{l}_{ms})_{,j}\partial\gamma^{s}\right.\nonumber\\
&\left.+\partial(g^{ns})\tensor{\varphi}{_l^m_n}\Gamma^{l}_{ms}\right)\lambda+(-1)g^{is}\tensor{\varphi}{_l^m_n}\Gamma^{l}_{ms}c^{n}\partial b_{i}+\left(g^{nj}\tensor{\varphi}{_l^m_n}\Gamma^{l}_{ms}\right)\partial\gamma^{s}\beta_{j}\nonumber\\
&+(-1)g^{ij}\left(\tensor{\varphi}{_l^m_n}\Gamma^{l}_{ms}\right)_{,j}\partial\gamma^{s}c^{n}b_{i}+g^{nj}\partial[\left(\tensor{\varphi}{_l^m_n}\Gamma^{l}_{ms}\right)_{,j}]\partial\gamma^{s}\nonumber\\
&+(-1)\partial(g^{is})\tensor{\varphi}{_l^m_n}\Gamma^{l}_{ms}c^{n}b_{i}+2\partial(g^{nj})\left(\tensor{\varphi}{_l^m_n}\Gamma^{l}_{ms}\right)_{,j}\partial\gamma^{s}\nonumber\\
&+\dfrac{1}{2}\partial^{2}(g^{ns})\tensor{\varphi}{_l^m_n}\Gamma^{l}_{ms}+g^{nj}_{,j}\partial\left(\tensor{\varphi}{_l^m_n}\Gamma^{l}_{ms}\right)\partial\gamma^{s},\nonumber
\end{align*}
\begin{align*}
[{g^{ij}\Gamma^{r}_{ik}c^{k}b_{j}b_{r}}_{\lambda}\tensor{\varphi}{_l_m_n}c^{l}c^{m}c^{n}]=&\left(6g^{in}\Gamma^{m}_{ik}\tensor{\varphi}{_l_m_n}c^{k}c^{l}\right)\lambda+3g^{ij}\Gamma^{n}_{ik}\tensor{\varphi}{_l_m_n}c^{k}c^{l}c^{m}b_{j}\nonumber\\
&+3g^{im}\Gamma^{r}_{ik}\tensor{\varphi}{_l_m_n}c^{k}c^{l}c^{n}b_{r}+6g^{in}\Gamma^{m}_{ik}\tensor{\varphi}{_l_m_n}\partial c^{k}c^{l}\nonumber\\
&+6\partial\left(g^{in}\Gamma^{m}_{ik}\right)\tensor{\varphi}{_l_m_n}c^{k}c^{l},\nonumber
\end{align*}
\begin{align*}
&[{g^{ij}\Gamma^{r}_{ik}c^{k}b_{j}b_{r}}_{\lambda}\tensor{\varphi}{^l_m_n}c^{m}c^{n}b_{l}]\\
=&\left(2g^{in}\Gamma^{m}_{ik}\tensor{\varphi}{^l_m_n}c^{k}b_{l}+2g^{ij}\Gamma^{n}_{il}\tensor{\varphi}{^l_m_n}c^{m}b_{j}\right.\nonumber\\
&+\left.2g^{im}\Gamma^{r}_{il}\tensor{\varphi}{^l_m_n}c^{n}b_{r}+2\partial\left(g^{in}\Gamma^{m}_{il}\right)\tensor{\varphi}{^l_m_n}\right)\lambda+2g^{ij}\Gamma^{n}_{ik}\tensor{\varphi}{^l_m_n}c^{k}c^{m}b_{j}b_{l}\nonumber\\
&+2g^{in}\Gamma^{r}_{ik}\tensor{\varphi}{^l_m_n}c^{k}c^{m}b_{l}b_{r}+g^{ij}\Gamma^{r}_{il}\tensor{\varphi}{^l_m_n}c^{m}c^{n}b_{j}b_{r}+2g^{ij}\Gamma^{n}_{il}\tensor{\varphi}{^l_m_n}c^{m}\partial b_{j}\nonumber\\
&+2g^{im}\Gamma^{r}_{il}\tensor{\varphi}{^l_m_n}c^{n}\partial b_{r}+2g^{in}\Gamma^{m}_{ik}\tensor{\varphi}{^l_m_n}\partial c^{k}b_{l}+2\partial\left(g^{in}\Gamma^{m}_{ik}\right)\tensor{\varphi}{^l_m_n}c^{k}b_{l}\nonumber\\
&+2\partial\left(g^{ij}\Gamma^{n}_{il}\right)\tensor{\varphi}{^l_m_n}c^{m}b_{j}+2\partial\left(g^{im}\Gamma^{r}_{il}\right)\tensor{\varphi}{^l_m_n}c^{n}b_{r}+\partial^{2}\left(g^{in}\Gamma^{m}_{il}\right)\tensor{\varphi}{^l_m_n},\nonumber
\end{align*}
\begin{align*}
&[{g^{ij}\Gamma^{r}_{ik}c^{k}b_{j}b_{r}}_{\lambda}\tensor{\varphi}{^l^m_n}c^{n}b_{l}b_{m}]\\
=&\left(2g^{ij}\Gamma^{n}_{im}\tensor{\varphi}{^l^m_n}b_{j}b_{l}+2g^{in}\Gamma^{r}_{im}\tensor{\varphi}{^l^m_n}b_{l}b_{r}\right)\lambda+2g^{in}\Gamma^{r}_{im}\tensor{\varphi}{^l^m_n}b_{l}\partial b_{r}\nonumber\\
&+g^{ij}\Gamma^{n}_{ik}\tensor{\varphi}{^l^m_n}c^{k}b_{j}b_{l}b_{m}+(-1)g^{in}\Gamma^{r}_{ik}\tensor{\varphi}{^l^m_n}c^{k}b_{l}b_{m}b_{r}\nonumber\\
&+2g^{ij}\Gamma^{r}_{il}\tensor{\varphi}{^l^m_n}c^{n}b_{j}b_{m}b_{r}+2g^{ij}\Gamma^{n}_{im}\tensor{\varphi}{^l^m_n}\partial b_{j}b_{l}+2\partial\left(g^{ij}\Gamma^{n}_{im}\right)\tensor{\varphi}{^l^m_n}b_{j}b_{l}\nonumber\\
&+2\partial\left(g^{in}\Gamma^{r}_{im}\right)\tensor{\varphi}{^l^m_n}b_{l}b_{r},\nonumber
\end{align*}
\begin{align*}
[{g^{ij}\Gamma^{r}_{ik}c^{k}b_{j}b_{r}}_{\lambda}\tensor{\varphi}{^l^m^n}b_{l}b_{m}b_{n}]=&3g^{ij}\Gamma^{r}_{in}\tensor{\varphi}{^l^m^n}b_{j}b_{l}b_{m}b_{r},\nonumber
\end{align*}
\begin{align*}
[{g^{ij}\Gamma^{r}_{ik}c^{k}b_{j}b_{r}}_{\lambda}\tensor{\varphi}{_l^m^n}\Gamma^{l}_{ms}\partial\gamma^{s}b_{n}]=&g^{ij}\Gamma^{r}_{in}\Gamma^{l}_{ms}\tensor{\varphi}{_l^m^n}\partial\gamma^{s}b_{j}b_{r},\nonumber
\end{align*}
\begin{align*}
&[{g^{ij}\Gamma^{r}_{ik}c^{k}b_{j}b_{r}}_{\lambda}\tensor{\varphi}{_l^m_n}\Gamma^{l}_{ms}\partial\gamma^{s}c^{n}]\\
=&g^{ij}\Gamma^{n}_{ik}\Gamma^{l}_{ms}\tensor{\varphi}{_l^m_n}\partial\gamma^{s}c^{k}b_{j}+(-1)g^{in}\Gamma^{r}_{ik}\Gamma^{l}_{ms}\tensor{\varphi}{_l^m_n}\partial\gamma^{s}c^{k}b_{r},\nonumber
\end{align*}
\begin{align*}
[{g^{ij}\Gamma^{k}_{ij}\partial b_{k}}_{\lambda}\tensor{\varphi}{_l_m_n}c^{l}c^{m}c^{n}]=&\left(3g^{ij}\Gamma^{m}_{ij}\tensor{\varphi}{_l_m_n}c^{l}c^{n}\right)\lambda+3\partial\left(g^{ij}\Gamma^{m}_{ij}\right)\tensor{\varphi}{_l_m_n}c^{l}c^{n},\nonumber
\end{align*}
\begin{align*}
[{g^{ij}\Gamma^{k}_{ij}\partial b_{k}}_{\lambda}\tensor{\varphi}{^l_m_n}c^{m}c^{n}b_{l}]=&\left(2g^{ij}\Gamma^{n}_{ij}\tensor{\varphi}{^l_m_n}c^{m}b_{l}\right)\lambda+2\partial\left(g^{ij}\Gamma^{n}_{ij}\right)\tensor{\varphi}{^l_m_n}c^{m}b_{l},\nonumber
\end{align*}
\begin{align*}
&[{g^{ij}\Gamma^{k}_{ij}\partial b_{k}}_{\lambda}\tensor{\varphi}{^l^m_n}c^{n}b_{l}b_{m}]\\
=&\left((-1)g^{ij}\Gamma^{n}_{ij}\tensor{\varphi}{^l^m_n}b_{l}b_{m}\right)\lambda+(-1)\partial(g^{ij}\Gamma^{n}_{ij})\tensor{\varphi}{^l^m_n}b_{l}b_{m},\nonumber
\end{align*}
\begin{align*}
&[{g^{ij}\Gamma^{k}_{ij}\partial b_{k}}_{\lambda}\tensor{\varphi}{_l^m_n}\Gamma^{l}_{ms}\partial\gamma^{s}c^{n}]\\
=&\left((-1)g^{ij}\Gamma^{n}_{ij}\tensor{\varphi}{_l^m_n}\Gamma^{l}_{ms}\partial\gamma^{s}\right)\lambda+(-1)\partial\left(g^{ij}\Gamma^{n}_{ij}\right)\tensor{\varphi}{_l^m_n}\Gamma^{l}_{ms}\partial\gamma^{s},\nonumber
\end{align*}
\begin{align*}
[{g_{ij}\partial\gamma^{i}c^{j}}_{\lambda}\tensor{\varphi}{^l_m_n}c^{m}c^{n}b_{l}]=&g_{il}\tensor{\varphi}{^l_m_n}\partial\gamma^{i}c^{m}c^{n},\nonumber
\end{align*}
\begin{align*}
[{g_{ij}\partial\gamma^{i}c^{j}}_{\lambda}\tensor{\varphi}{^l^m_n}c^{n}b_{l}b_{m}]=&2g_{im}\tensor{\varphi}{^l^m_n}\partial\gamma^{i}c^{n}b_{l},\nonumber
\end{align*}
\begin{align*}
[{g_{ij}\partial\gamma^{i}c^{j}}_{\lambda}\tensor{\varphi}{^l^m^n}b_{l}b_{m}b_{n}]=&3g_{in}\tensor{\varphi}{^l^m^n}\partial\gamma^{i}b_{l}b_{m},\nonumber
\end{align*}
\begin{align*}
[{g_{ij}\partial\gamma^{i}c^{j}}_{\lambda}\tensor{\varphi}{_l^m^n}\Gamma^{l}_{ms}\partial\gamma^{s}b_{n}]=&g_{in}\tensor{\varphi}{_l^m^n}\Gamma^{l}_{ms}\partial\gamma^{i}\partial\gamma^{s},\nonumber
\end{align*}
\begin{align*}
[{g^{ij}\Gamma^{k}_{ia}\Gamma^{a}_{jr}\partial\gamma^{r}b_{k}}_{\lambda}\tensor{\varphi}{_l_m_n}c^{l}c^{m}c^{n}]=&3g^{ij}\Gamma^{n}_{ia}\Gamma^{a}_{jr}\tensor{\varphi}{_l_m_n}\partial\gamma^{r}c^{l}c^{m},\nonumber
\end{align*}
\begin{align*}
[{g^{ij}\Gamma^{k}_{ia}\Gamma^{a}_{jr}\partial\gamma^{r}b_{k}}_{\lambda}\tensor{\varphi}{^l_m_n}c^{m}c^{n}b_{l}]=&(-2)g^{ij}\Gamma^{n}_{ia}\Gamma^{a}_{jr}\tensor{\varphi}{^l_m_n}\partial\gamma^{r}c^{m}b_{l},\nonumber
\end{align*}
\begin{align*}
[{g^{ij}\Gamma^{k}_{ia}\Gamma^{a}_{jr}\partial\gamma^{r}b_{k}}_{\lambda}\tensor{\varphi}{^l^m_n}c^{n}b_{l}b_{m}]=&g^{ij}\Gamma^{n}_{ia}\Gamma^{a}_{jr}\tensor{\varphi}{^l^m_n}\partial\gamma^{r}b_{l}b_{m},\nonumber
\end{align*}
\begin{align*}
[{g^{ij}\Gamma^{k}_{ia}\Gamma^{a}_{jr}\partial\gamma^{r}b_{k}}_{\lambda}\tensor{\varphi}{_l^m_n}\Gamma^{l}_{ms}\partial\gamma^{s}c^{n}]=&g^{ij}\Gamma^{n}_{ia}\Gamma^{a}_{jr}\tensor{\varphi}{_l^m_n}\Gamma^{l}_{ms}\partial\gamma^{r}\partial\gamma^{s}.\nonumber
\end{align*}

\subsubsection{Coefficient of \texorpdfstring{$\lambda$}{}}

We prove that it is zero after an easy computation using only that the metric is covariantly constant and that $d\varphi=0$, (\ref{Phicovariantlyconstant}).

\subsubsection{Coefficient of \texorpdfstring{$\lambda^0$}{}}
We compute the coefficient of each type of term that appears:
\begin{itemize}
\item \fbox{Coefficient of $c^{i}c^{j}\beta_{k}$:} \hspace{0.1in} $0$.

\item \fbox{Coefficient of $c^{i}c^{j}c^{k}b_{l}$:} \hspace{0.1in} $0$, we use that $d\varphi=0.$

\item \fbox{Coefficient of $\partial\gamma^{s}c^{i}c^{j}$:}\\

After some simplifications using that $d\varphi=0$ and $\nabla g=0$ we arrived at the following expression:
\begin{align*}
&(-\dfrac{i}{4\sqrt{2}})\left(g^{ij}\Gamma^{k}_{lm}\Gamma^{l}_{js}\tensor{\varphi}{_k_i_n}\partial\gamma^{s}c^{m}c^{n}+(-1)g^{ij}\Gamma^{k}_{sl}\Gamma^{l}_{jm}\tensor{\varphi}{_k_i_n}\partial\gamma^{s}c^{m}c^{n}\right.\nonumber\\
&\left.+g^{ij}\left(\Gamma^{k}_{js}\right)_{,m}\tensor{\varphi}{_k_i_n}\partial\gamma^{s}c^{m}c^{n}+(-1)g^{ij}\left(\Gamma^{k}_{jm}\right)_{,s}\tensor{\varphi}{_k_i_n}\partial\gamma^{s}c^{m}c^{n}\right)\nonumber\\
=&(-\dfrac{i}{4\sqrt{2}})g^{ij}\tensor{\varphi}{_k_i_n}\tensor{R}{^k_j_m_s}\partial\gamma^{s}c^{m}c^{n}\nonumber\\
=&0,\nonumber
\end{align*}
the last equality is zero by Lemma \ref{lemmaG2Curvatureidentitywiththreeform}.

\item \fbox{Coefficient of $c^{i}b_{j}\beta_{k}$:} \hspace{0.1in} $0$.

\item \fbox{Coefficient of $c^{i}c^{j}b_{l}b_{m}$:}\hspace{0.1in} $0$, we use that $d\varphi=0$.

\item \fbox{Coefficient of $\partial\gamma^{s}c^{i}b_{j}$:}

After some simplifications using that $d\varphi=0$ and $\nabla g=0$ we arrived at the following expression:
\begin{align*}
&\dfrac{i}{4\sqrt{2}}\left(\tensor{\varphi}{_i^j^k}\left(\Gamma^{i}_{js}\right)_{,l}\partial\gamma^{s}c^{l}b_{k}+(-1)\tensor{\varphi}{_i^j^k}\left(\Gamma^{i}_{jl}\right)_{,s}\partial\gamma^{s}c^{l}b_{k}\right.\\
&\left.+\tensor{\varphi}{_i^j^k}\Gamma^{i}_{la}\Gamma^{a}_{js}\partial\gamma^{s}c^{l}b_{k}+(-1)\tensor{\varphi}{_i^j^k}\Gamma^{i}_{sa}\Gamma^{a}_{jl}\partial\gamma^{s}c^{l}b_{k}\right)\\
&+\dfrac{i}{4\sqrt{2}}\left(g^{ij}\tensor{\varphi}{_l^m_n}\left(\Gamma^{l}_{ms}\right)_{,j}\partial\gamma^{s}c^{n}b_{i}+(-1)g^{ij}\tensor{\varphi}{_l^m_n}\left(\Gamma^{l}_{mj}\right)_{,s}\partial\gamma^{s}c^{n}b_{i}\right.\\
&\left.+g^{ij}\tensor{\varphi}{_l^m_n}\Gamma^{l}_{ja}\Gamma^{a}_{ms}\partial\gamma^{s}c^{n}b_{i}+(-1)g^{ij}\tensor{\varphi}{_l^m_n}\Gamma^{l}_{sa}\Gamma^{a}_{mj}\partial\gamma^{s}c^{n}b_{i}\right)\\
=&\dfrac{i}{4\sqrt{2}}\tensor{\varphi}{_i^j^k}\tensor{R}{^i_j_l_s}\partial\gamma^{s}c^{l}b_{k}+\dfrac{i}{4\sqrt{2}}g^{ij}\tensor{\varphi}{_l^m_n}\tensor{R}{^l_m_j_s}\partial\gamma^{s}c^{n}b_{i}\nonumber\\
=&0,
\end{align*}
the last equality is zero by Lemma \ref{lemmaG2Curvatureidentitywiththreeform}.

\item  \fbox{Coefficient of $b_{i}b_{j}\beta_{k}$:}\hspace{0.1in} $0$.

\item \fbox{Coefficient of $c^{l}b_{i}b_{j}b_{k}$:}\hspace{0.1in} $0$, we use that $d\varphi=0$.

\item \fbox{Coefficient of $\partial\gamma^{s}b_{i}b_{j}$:}

After some simplifications using that $d\varphi=0$ and $\nabla g=0$ we arrived at the following expression:
\begin{align*}
&\dfrac{i}{2\sqrt{2}}\left(g^{ij}\tensor{\varphi}{_l^m^n}\left(\Gamma^{l}_{ms}\right)_{,j}\partial\gamma^{s}b_{i}b_{n}+(-1)g^{ij}\tensor{\varphi}{_l^m^n}\left(\Gamma^{l}_{mj}\right)_{,s}\partial\gamma^{s}b_{i}b_{n}\right.\nonumber\\
&\left.+g^{ij}\tensor{\varphi}{_l^m^n}\Gamma^{l}_{ja}\Gamma^{a}_{ms}\partial\gamma^{s}b_{i}b_{n}+(-1)g^{ij}\tensor{\varphi}{_l^m^n}\Gamma^{l}_{sa}\Gamma^{a}_{mj}\partial\gamma^{s}b_{i}b_{n}\right)\nonumber\\
=&\dfrac{i}{2\sqrt{2}}g^{ij}\tensor{\varphi}{_l^m^n}\tensor{R}{^l_m_j_s}\partial\gamma^{s}b_{i}b_{n}\nonumber\\
=&0,\nonumber
\end{align*}
the last equality is zero by Lemma \ref{lemmaG2Curvatureidentitywiththreeform}.

\item \fbox{Coefficient of $c^{i}\partial b_{j}$:}\hspace{0.1in} $0$.

\item \fbox{Coefficient of $\partial c^{i}b_{j}$:}\hspace{0.1in} $0$.

\item \fbox{Coefficient of $\partial c^{i}c_{j}$:}\hspace{0.1in} $0$.

\item \fbox{Coefficient of $b_{i}b_{j}b_{k}b_{l}$:}\hspace{0.1in} $0$, we use that $d\varphi=0$.

\item \fbox{Coefficient of $\partial b_{i}b_{j}$:}\hspace{0.1in} $0$.

\item \fbox{Coefficient of $\partial\gamma^{s}\beta_{i}$:}\hspace{0.1in} $0$.

\item \fbox{Coefficient of $\partial\gamma^{s}\partial\gamma^{r}$:}

After some simplifications using that $d\varphi=0$ and $\nabla g=0$ we arrived at the following expression:
\begin{align*}
&\dfrac{(-i)}{2\sqrt{2}}\left(g^{ij}\Gamma^{k}_{ir}\left(\Gamma^{l}_{km}\right)_{,s}\tensor{\varphi}{^m_j_l}\partial\gamma^{s}\partial\gamma^{r}+(-1)g^{ij}\Gamma^{k}_{ir}\left(\Gamma^{l}_{ms}\right)_{,k}\tensor{\varphi}{^m_j_l}\partial\gamma^{s}\partial\gamma^{r}\right.\nonumber\\
&\left.+g^{ij}\Gamma^{k}_{ir}\left(\Gamma^{l}_{sa}\Gamma^{a}_{mk}\right)\tensor{\varphi}{^m_j_l}\partial\gamma^{s}\partial\gamma^{r}+(-1)g^{ij}\Gamma^{k}_{ir}\left(\Gamma^{l}_{ka}\Gamma^{a}_{ms}\right)\tensor{\varphi}{^m_j_l}\partial\gamma^{s}\partial\gamma^{r}\right)\nonumber\\
&+\dfrac{(-i)}{2\sqrt{2}}\left(g^{ij}\Gamma^{l}_{js}\left(\Gamma^{m}_{in}\right)_{,r}\tensor{\varphi}{^n_l_m}\partial\gamma^{r}\partial\gamma^{s}+(-1)g^{ij}\Gamma^{l}_{js}\left(\Gamma^{m}_{nr}\right)_{,i}\tensor{\varphi}{^n_l_m}\partial\gamma^{r}\partial\gamma^{s}\right.\nonumber\\
&\left.+g^{ij}\Gamma^{l}_{js}\left(\Gamma^{m}_{ra}\Gamma^{a}_{ni}\right)\tensor{\varphi}{^n_l_m}\partial\gamma^{r}\partial\gamma^{s}+(-1)g^{ij}\Gamma^{l}_{js}\left(\Gamma^{m}_{ia}\Gamma^{a}_{nr}\right)\tensor{\varphi}{^n_l_m}\partial\gamma^{r}\partial\gamma^{s}\right)\nonumber\\
=&\dfrac{(-i)}{2\sqrt{2}}g^{ij}\Gamma^{k}_{ir}\tensor{\varphi}{^m_j_l}\tensor{R}{^l_m_s_k}\partial\gamma^{s}\partial\gamma^{r}+\dfrac{(-i)}{2\sqrt{2}}g^{ij}\Gamma^{l}_{js}\tensor{\varphi}{^n_l_m}\tensor{R}{^m_n_r_i}\partial\gamma^{r}\partial\gamma^{s}\nonumber\\
=&0,\nonumber
\end{align*}
the last equality is zero by Lemma \ref{lemmaG2Curvatureidentitywiththreeform}.
\end{itemize}
Then we conclude that $[{G_{+}}_{\lambda} \Phi_{-}]=0$, similarly we obtain $[{G_{-}}_{\lambda} \Phi_{+}]=0$.

\subsection{\texorpdfstring{$[{G_{+}}_{\lambda} \Phi_{+}]$}{}}\label{G+ lambda Phi+}

$G=c^{i}\beta_{i}+\partial\gamma^{i}b_{i},$
\begin{align*}
\Phi_{+}=&\frac{1}{12\sqrt{2}}\varphi_{ijk} c^{i}c^{j}c^{k}+ \frac{1}{4\sqrt{2}}\varphi_{ijk}g^{il}c^{j}c^{k}b_{l} + \frac{1}{4\sqrt{2}}\varphi_{ijk} g^{il}g^{jm}c^{k} b_{l}b_{m}\\
&+ \frac{1}{12\sqrt{2}}\varphi_{ijk} g^{il}g^{jm}g^{kn}b_{l}b_{m}b_{n}+\frac{1}{2\sqrt{2}}\varphi_{ijk}\Gamma^{i}_{mn}g^{jm}\partial \gamma^{n}g^{kl}b_{l}\\
&+\frac{1}{2\sqrt{2}}\varphi_{ijk}\Gamma^{i}_{mn}g^{jm}\partial \gamma^{n}c^{k}.
\end{align*}
We can use the $\lambda$-brackets performed in section \ref{G+withPhi-}.

\subsubsection{Coefficient of \texorpdfstring{$\lambda^{2}$}{}}

We realize inmediatly that the coefficient of $\lambda^{2}$ is zero because there are not $\lambda^{2}$ terms.

\subsubsection{Coefficient of \texorpdfstring{$\lambda$}{}}
\begin{itemize}
\item \fbox{Coefficient of $c^{i}c^{j}$:} \hspace{0.1in} $0$, we use that $\nabla\varphi=0.$

\item \fbox{Coefficient of $c^{i}b_{j}$:} \hspace{0.1in} $0$, we use that $\nabla\varphi=0.$

\item \fbox{Coefficient of $b_{i}b_{j}$:}

$\dfrac{1}{4\sqrt{2}}\tensor{\varphi}{^l^m^n_{,n}}b_{l}b_{m}=0$.

As $\nabla\varphi=0$ (\ref{Phicovariantlyconstant}) we have,
\begin{align*}
\tensor{\varphi}{^l^m^n_{,n}}=-\tensor{\varphi}{^a^m^n}\Gamma^{l}_{an}-\tensor{\varphi}{^l^a^n}\Gamma^{m}_{an}-\tensor{\varphi}{^l^m^a}\Gamma^{n}_{an}=0,
\end{align*}
to conclude that the last equality is zero we used the symmetries of $\varphi$ and (\ref{localcoordinateassumption}).

\item \fbox{Coefficient of $\gamma^{s}$:}

$\dfrac{1}{2\sqrt{2}}\left(\tensor{\varphi}{_l^m^n}\Gamma^{l}_{ms}\right)_{,n}\partial\gamma^{s}=0.$

As $\nabla\varphi=0$ (\ref{Phicovariantlyconstant}) we have
\begin{align*}
\left(\tensor{\varphi}{_l^m^n}\Gamma^{l}_{ms}\right)_{,n}=&\tensor{\varphi}{_l^m^n_{,n}}\Gamma^{l}_{ms}+\tensor{\varphi}{_l^m^n}\left(\Gamma^{l}_{ms}\right)_{,n}\\
=&\left(\tensor{\varphi}{_a^m^n}\Gamma^{a}_{ln}-\tensor{\varphi}{_l^a^n}\Gamma^{m}_{an}-\tensor{\varphi}{_l^m^a}\Gamma^{n}_{an}\right)\Gamma^{l}_{ms}+\tensor{\varphi}{_l^m^n}\left(\Gamma^{l}_{ms}\right)_{,n}\\
=&\tensor{\varphi}{_a^m^n}\Gamma^{a}_{ln}\Gamma^{l}_{ms}+\tensor{\varphi}{_l^m^n}\left(\Gamma^{l}_{ms}\right)_{,n}\\
=&\tensor{\varphi}{_l^m^n}\left(\Gamma^{l}_{ms}\right)_{,n}+(-1)\tensor{\varphi}{_l^m^n}\left(\Gamma^{l}_{mn}\right)_{,s}+\tensor{\varphi}{_l^m^n}\Gamma^{l}_{na}\Gamma^{a}_{ms}\\
&+(-1)\tensor{\varphi}{_l^m^n}\Gamma^{l}_{sa}\Gamma^{a}_{mn}\\
=&\tensor{\varphi}{_l^m^n}\tensor{R}{^l_m_n_s}\\
=&0.
\end{align*}
That the last equality is zero follows by Lemma \ref{lemmaG2Curvatureidentitywiththreeform}.
\end{itemize}
Then $[{G_{+}}_{\lambda} \phi_{+}]=K_{+}$ and similarly $[{G_{-}}_{\lambda} \phi_{-}]=K_{-}$.

\subsection{\texorpdfstring{$[{L_{+}}_{\lambda}\Phi_{+}]$}{}}\label{L+ lambda Phi+}
\begin{align*}
L=&\partial\gamma^{i}\beta_{i}-\dfrac{1}{2}c^{i}\partial b_{i}+\dfrac{1}{2}\partial c^{i}b_{i},
\end{align*}
\begin{align*}
\Phi_{+}=&\frac{1}{12\sqrt{2}}\varphi_{ijk} c^{i}c^{j}c^{k}+ \frac{1}{4\sqrt{2}}\varphi_{ijk}g^{il}c^{j}c^{k}b_{l}\\
\quad&+ \frac{1}{4\sqrt{2}}\varphi_{ijk} g^{il}g^{jm}c^{k} b_{l}b_{m}+\frac{1}{12\sqrt{2}}\varphi_{ijk} g^{il}g^{jm}g^{kn}b_{l}b_{m}b_{n}\\
\quad&+\frac{1}{2\sqrt{2}}\varphi_{ijk}\Gamma^{i}_{mn}g^{jm}\partial \gamma^{n}g^{kl}b_{l}+\frac{1}{2\sqrt{2}}\varphi_{ijk}\Gamma^{i}_{mn}g^{jm}\partial \gamma^{n}c^{k}.
\end{align*}
The only brackets that can produce $\lambda^{2}$ are:
\begin{align*}
[{c^{i}\partial b_{i}}_{\lambda}\tensor{\varphi}{^l_m_n}c^{m}c^{n}b_{l}]=\left(2\tensor{\varphi}{^i_i_n}c^{n}\right)\lambda^{2}+ \text{terms linear in $\lambda$},
\end{align*}
\begin{align*}
[{c^{i}\partial b_{i}}_{\lambda}\tensor{\varphi}{^l^m_n}c^{n}b_{l}b_{m}]=\left(2\tensor{\varphi}{^l^i_i}b_{l}\right)\lambda^{2}+ \text{terms linear in $\lambda$},
\end{align*}
\begin{align*}
[{\partial c^{i}b_{i}}_{\lambda}\tensor{\varphi}{^l_m_n}c^{m}c^{n}b_{l}]=\left(2\tensor{\varphi}{^i_i_n}c^{n}\right)\lambda^{2}+ \text{terms linear in $\lambda$},
\end{align*}
\begin{align*}
[{\partial c^{i} b_{i}}_{\lambda}\tensor{\varphi}{^l^m_n}c^{n}b_{l}b_{m}]=\left(2\tensor{\varphi}{^l^i_i}b_{l}\right)\lambda^{2}+ \text{terms linear in $\lambda$}.
\end{align*}
Then the coefficient of $\lambda^{2}$ is zero. We conclude that:\\

$[{L_{+}}_{\lambda}\phi_{+}]=(\partial+\frac{3}{2}\lambda)\phi_{+},$ similarly we obtain $[{L_{-}}_{\lambda}\phi_{-}]=(\partial+\frac{3}{2}\lambda)\phi_{-}.$

\bibliography{bibliografia}

\end{document}